\documentclass[11pt]{amsart}

\usepackage{amsmath,amssymb}
\usepackage{graphicx}
\usepackage{fullpage}
\usepackage{enumerate}
\usepackage{enumitem}
\usepackage{pgf,tikz}
\usetikzlibrary{patterns}
\usetikzlibrary{arrows}
\usetikzlibrary{positioning}
\usepackage{thmtools,thm-restate}

\def\COMMENT#1{}
\let\COMMENT=\footnote
\newtheorem{theorem}{Theorem}[section]

\newtheorem{lemma}[theorem]{Lemma}

\newtheorem{claim}[theorem]{Claim}
\newtheorem{fact}[theorem]{Fact}
\newtheorem{question}[theorem]{Question}
\newtheorem{corollary}[theorem]{Corollary}
\newtheorem{conjecture}[theorem]{Conjecture}
\newtheorem{proposition}[theorem]{Proposition}
\newtheorem{remark}[theorem]{Remark}

\def\eps{\varepsilon}
\newenvironment{proofclaim}{\removelastskip\penalty55\medskip\noindent{\it Proof of the claim. }}

\newcommand{\qedclaim}{
\hfill\scalebox{0.6}{$\blacksquare$}
}

\newcommand{\N}{\mathbb{N}}

\newcommand{\floor}[1]{\left\lfloor #1 \right\rfloor}

\newcommand{\Csm}{\mathcal{C}_{\textrm{sm}}}
\newcommand{\Ctri}{\mathcal{C}_{\textrm{tri}}}
\newcommand{\Clg}{\mathcal{C}_{\textrm{lg}}}

\theoremstyle{definition}

\DeclareUnicodeCharacter{2212}{-}

\title{Cycle tilings and $H$-factors in directed graphs}

\author{Theodore Molla and Andrew Treglown}

\thanks{TM: Department of Mathematics and Statistics, University of South Florida, Tampa, FL. 
  Research supported by \indent NSF grant DMS-2154313.
Email: \texttt{molla@usf.edu}. 
\\
\indent  AT: School of Mathematics, University of Birmingham, United Kingdom. 
Research supported by EPSRC grant \indent UKRI1117.
Email: \texttt{a.c.treglown@bham.ac.uk}. 
}

\begin{document}

\begin{abstract}
We prove several results concerning cycle tilings and $H$-factors in digraphs.
We provide a minimum semi-degree condition for forcing a digraph to contain a given spanning collection of vertex-disjoint orientations of cycles. Our result is asymptotically best possible for odd cycles and can be viewed as a digraph analogue of the El-Zahar conjecture.
In addition, we asymptotically determine the minimum degree threshold for forcing an $H$-factor in a digraph for a range of digraphs $H$, including the cases when $H$ is a tree or anti-directed cycle. Furthermore, 
 an asymptotically exact Ore-type result for forcing a transitive tournament factor in a digraph is  proven. Several related open problems are also highlighted.
\end{abstract}

\maketitle


\section{Introduction}
\subsection{Minimum degree conditions forcing $H$-factors}
The focus of this paper is on cycle tilings and  $H$-factors in digraphs. Unless stated otherwise, the digraphs we consider do not have loops and we allow for at most one edge in each direction between any pair of vertices.
For a digraph $G$ we write $V(G)$ for its vertex set and $E(G)$ for its edge set, and define
$|G|:=|V(G)|$. Given  $x,y \in V(G)$, we write $xy$ for the edge directed from $x$ to $y$ in $G$.
Given two (di)graphs $H$  and $G$, an \emph{$H$-tiling} in $G$ 
is a collection of vertex-disjoint copies of $H$ in $G$. An \emph{$H$-factor} in $G$ is an $H$-tiling that
 covers all the vertices of $G$.
 We  write $T_r$ to denote the transitive tournament on $r$ vertices.
 
The following celebrated result of Hajnal and Szemer\'edi~\cite{hs} determines the minimum degree threshold for forcing a $K_r$-factor in a graph.
\begin{theorem}\cite{hs}\label{hs}
Let $n \in \mathbb N$ be divisible by $r \in \mathbb N$. If $G$ is an $n$-vertex  graph with $\delta (G) \geq (1-1/r)n$, then $G$ contains a  $K_r$-factor. Moreover, the bound on  $\delta (G)$  is tight.
\end{theorem}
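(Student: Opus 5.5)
The natural route to Theorem~\ref{hs} is the Kierstead--Kostochka argument, which is much shorter than the original proof of Hajnal and Szemer\'edi. The tightness claim is separate and easy, so I would handle it first. Take $n=rm$ and let $G$ be the complete $r$-partite graph whose parts have sizes $m+1,m,\dots,m,m-1$. Then $\delta(G)=n-(m+1)=(1-1/r)n-1$. Every copy of $K_r$ uses exactly one vertex from each part, so a $K_r$-factor would need all parts to have size $m$; hence $G$ has no $K_r$-factor. This shows the bound on $\delta(G)$ cannot be lowered.

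For sufficiency I would work with the complement and prove the equivalent equitable colouring statement. Set $\Delta=r-1$ in $\overline G$, so that $\Delta(\overline G)\le n/r-1$. We want a partition of $V(\overline G)$ into $n/r$ independent sets of size exactly $r$, that is, an equitable $(\Delta'+1)$-colouring of a graph of maximum degree $\Delta'$. Concretely, the claim is that every graph with $\Delta(F)\le s-1$ on $n=sr$ vertices has an equitable $s$-colouring; here $s=n/r$ and $F=\overline G$. The colour classes, each of size $r$, are then cliques in $G$, giving the $K_r$-factor.

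I would prove the claim by induction on the number of edges of $F$. Delete an edge $xy$, take an equitable $s$-colouring of $F-xy$, and if $x$ and $y$ receive the same colour, move $x$ to some class $W$ in which it has no neighbour. Such a class exists because $x$ has at most $s-1$ neighbours. This leaves a \emph{nearly equitable} colouring: one class $V^-$ of size $r-1$, one class $V^+$ of size $r+1$, and all other classes of size $r$. Next, build the auxiliary digraph on the colour classes with an arc $X\to Y$ whenever some vertex of $X$ has no neighbour in $Y$, so that it can be moved into $Y$. Let $\mathcal A$ be the set of classes that can reach $V^-$. If $V^+\in\mathcal A$, shifting vertices along a path from $V^+$ to $V^-$ fixes the colouring.

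The hard case, and the one I expect to be the main obstacle, is when $V^+\notin\mathcal A$. Let $\mathcal B$ be the remaining classes, with $a=|\mathcal A|$ and $b=|\mathcal B|$. Every vertex $y$ in $B=\bigcup\mathcal B$ has a neighbour in each class of $\mathcal A$, so it has at least $a$ neighbours in $A=\bigcup\mathcal A$. Comparing this with the degree bound $s-1$ yields the existence of a \emph{solo} vertex: a vertex in a terminal class of $\mathcal A$ that has exactly one neighbour in some class of $\mathcal B$. The proof then proceeds by a secondary induction on $b$, or equivalently a discharging and counting argument over the solo edges between $A$ and $B$. One swaps a solo vertex $z$ with its unique neighbour $w$ in a class of $\mathcal B$, which enlarges $\mathcal A$ or reduces the problem to a smaller configuration, and then applies induction to the subgraph on $B$ together with the moved vertices. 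The delicate part is this counting: it must guarantee either a movable pair or a strict decrease of the potential, using $\Delta\le s-1$ exactly and nothing weaker. Here I would follow Kierstead--Kostochka's accessible/terminal-class bookkeeping closely rather than improvising.
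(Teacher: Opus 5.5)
\textbf{Assessment.} The paper gives no proof of Theorem~\ref{hs}. It quotes it from \cite{hs} and notes that it also follows from the digraph theorem of \cite{cdkm} (double every edge) and from Theorem~\ref{orehsz}. Several parts of your proposal are correct:
\begin{itemize}
\item the tightness example, which is the undirected version of the example after Theorem~\ref{orethm2};
\item the passage to an equitable $s$-colouring of $\overline G$ with $\Delta(\overline G)\le s-1$ (your ``$\Delta=r-1$'' is a slip);
\item the reduction to a nearly equitable colouring;
\item the shifting argument when $V^+\in\mathcal A$.
\end{itemize}
However, the case $V^+\notin\mathcal A$ is the entire content of the theorem. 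What you write there is not an argument, and in places it would not work.

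\textbf{The gap, concretely.}

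(1) Your swap need not be proper. Your definition of solo only gives $N_Y(z)=\{w\}$, so $z$ may enter $Y\setminus\{w\}$. Nothing lets $w$ enter $X\setminus\{z\}$. The relevant notion is an edge $zy$ with $z$ in a terminal class $X$, $y\in B$, and $N_X(y)=\{z\}$.

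(2) A swap keeps all class sizes, so it helps only if you prove it enlarges $\mathcal A$, which you do not. The move that makes progress is this. If such a $z$ is \emph{movable}, i.e.\ it has no neighbour in some $W\in\mathcal A\setminus\{X\}$, put $z$ into $W$ and $y$ into $X$. Then shift along a $W$--$V^-$ path in $\mathcal H-X$, which exists because $X$ is terminal. This equitably colours $A\cup\{y\}$ with $a$ colours. The colouring left on $B\setminus\{y\}$ is then either equitable, or nearly equitable with fewer than $b$ inaccessible classes.

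(3) The induction on $b$ applies to $F[B\setminus\{y\}]$ with $b$ colours only because $d_A(y)\ge a$ for all $y\in B$, which gives $\Delta(F[B])\le (s-1)-a=b-1$. You never state this inequality. Adding moved vertices of $A$ to the $B$-side subgraph, as you propose, can destroy it.

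(4) The existence of a movable solo vertex does not simply drop out of the degree bound. Count edges between a single terminal class $X$ and $B$: non-movable vertices of $X$ have at most $b$ neighbours in $B$, while $|B|=br+1$. This count forces a movable solo vertex when $a\le b+1$. For larger $a$ it says nothing, and that is precisely where Kierstead and Kostochka need an additional, more delicate argument.

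So the sufficiency half defers its only non-routine step to the literature. As written, it is no more a proof than citing \cite{hs} or \cite{kier}.
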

In the setting of digraphs, there is more than one natural version of minimum degree.
The \emph{minimum (total) degree $\delta (G)$} of a digraph $G$ is the minimum number of edges incident to a vertex in $G$. The \emph{minimum semi-degree $\delta^0(G)$} of a digraph $G$ is the minimum of all the in- and outdegrees of the vertices in $G$. 
In~\cite{treglown}, the second author determined  the minimum semi-degree $\delta^0(G)$ threshold 
for forcing an $H$-factor in a digraph $G$ for any fixed \textit{tournament} $H$ (for sufficiently large digraphs $G$). Moreover, the analogous threshold for  the minimum degree $\delta(G)$ version of the problem was established through results from~\cite{cdkm, cdmt,  wang2}.
For example, in~\cite{cdkm} it was proven that given any $n \in \mathbb N$ divisible by $r$, every $n$-vertex digraph $G$ with $\delta(G)\geq 2(1-1/r)n-1$ contains a $T_r$-factor. 
Viewing a graph as a digraph where every edge is a `double edge',  one immediately sees that this result implies Theorem~\ref{hs}.

Building on a number of earlier results,  K\"uhn and Osthus~\cite{KuhnO} determined, up to an additive constant, the minimum degree threshold for forcing an  $H$-factor in a graph $G$, for \textit{any} fixed graph $H$. 
Our first new result  provides a digraph analogue of the K\"uhn--Osthus
theorem  for a certain type of $H$-factor.
 Given digraphs $H$ and $F$, a \emph{homomorphism} from $H$ into $F$ is a mapping $\phi :V(H) \rightarrow V(F)$ such that $\phi(x)\phi(y) \in E(F)$ for every
$xy \in E(G)$. A \textit{directed path}  $P$  is a path $v_1 \dots v_k$ where
$v_iv_{i+1} \in E(P)$ for all $i \in [k-1]$.

\begin{theorem}\label{mainthm}
Let $H$ be a digraph that has a homomorphism into a directed path.
Given any $\eta>0$, there exists $n_0=n_0(\eta,H)\in \mathbb N$ such that the following holds for all $n \geq n_0$ divisible by $|H|$.
If $G$ is an $n$-vertex digraph with 
$$\delta(G)\geq (1+\eta)n,$$
then $G$ contains an $H$-factor.
\end{theorem}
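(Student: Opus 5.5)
The plan is to use the absorbing method together with the regularity lemma for digraphs. Write $h := |H|$ and fix a homomorphism $\phi$ from $H$ into a directed path $P_k = u_1 \dots u_k$. By blowing up, $H$ is then a subdigraph of the "blown-up directed path" $P_k(t)$. Here the vertex classes $V_1,\dots,V_k$ each have size $t$, and all edges go from $V_i$ to $V_{i+1}$. It therefore suffices to find, inside $G$, structures that contain many such blow-ups.

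The key observation is about degree. If $\delta(G)\ge (1+\eta)n$, then every vertex $x$ has $d^+(x)+d^-(x)\ge (1+\eta)n$. Hence for any two vertices $x,y$ we have $|N^+(x)\cap N^-(y)|+|N^-(x)\cap N^+(y)| \ge 2\eta n$ roughly, and in particular the underlying graph has minimum degree at least $\eta n/2$ above $n/2$ in a double-edge sense. The reduced digraph $R$ obtained from the regularity lemma inherits $\delta(R)\ge (1+\eta/2)|R|$.

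\textbf{Step 1: almost-tiling.} I would show that $R$ contains a perfect (fractional) tiling by directed edges $\vec{P}_2$ with each edge used in a balanced way. By a simple counting argument (a digraph with $\delta \ge |R|$ has a "double edge" at every vertex, or a $\vec P_2$-factor via a matching argument on the underlying graph, which has minimum degree $\ge |R|/2$ by Dirac), the underlying graph of $R$ has a perfect matching. Each matched edge $ij$ corresponds to an $\eps$-regular pair oriented, say, from $V_i$ to $V_j$. Since $H$ maps homomorphically into a directed path, $H$ maps homomorphically into a long directed path, and a copy of $H$ can be embedded into a suitably balanced blow-up of a single regular oriented pair only if $k\le 2$. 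To handle general $k$, I would instead tile $R$ with structures admitting a homomorphism from $P_k$ with balanced preimages. The natural choice is a double edge (a $2$-cycle), into which every directed path maps with parts of equal size up to one vertex. Alternatively one can use a pair of oriented edges forming a directed path. The degree condition $\delta(R)\ge(1+\eta/2)|R|$ forces a linear number of double edges at each vertex, and a Hajnal–Szemerédi / matching argument on the graph of double edges, plus a fractional-tiling LP argument, gives an almost-perfect tiling of $R$ by double edges and short directed paths onto which $H$ maps in a balanced way. The blow-up lemma, or a greedy embedding in regular pairs, then covers all but $\eps n$ vertices of $G$ with copies of $H$.

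\textbf{Step 2: absorbing.} I would prove an absorbing lemma: every $h$-set $S$ has $\Omega(n^{m})$ absorbers. An absorber is an $m$-set $A$ such that both $G[A]$ and $G[A\cup S]$ have $H$-factors. Given $x\in S$ and another vertex $y$, the condition $\delta(G)\ge(1+\eta)n$ gives $\Omega(n)$ common "same-type" neighbours, so $x$ and $y$ are reachable via linearly many copies of $H-v$. This is the standard reachability/closedness argument of Lo–Markström. A random selection then yields an absorbing set $A$ of size $\le \gamma n$.

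\textbf{Step 3: combine.} I would combine Steps 1 and 2 in the usual way: set aside $A$, apply Step 1 to $G-A$, which still satisfies the degree condition with $\eta/2$, and absorb the leftover vertices.

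I expect the main obstacle to be Step 2, specifically showing that all vertices lie in a single reachability class. The degree condition only guarantees large $|N^+(x)\cap N^-(y)|+|N^-(x)\cap N^+(y)|$, not either term individually. The orientation constraints of $H$ then matter, so I would exploit that $H$ is "bipartite-like" along the path layers. Concretely, I would choose a vertex $v$ of $H$ in the first or last layer, so that it only needs out-neighbours or only in-neighbours, whichever $x$ and $y$ share.
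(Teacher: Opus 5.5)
\textbf{Gap: Step 1 (the almost-perfect tiling) does not work.} Your Steps 2 and 3 are essentially the paper's. For any $x,y$, one of $|N^+(x)\cap N^+(y)|$ and $|N^-(x)\cap N^-(y)|$ is at least $\eta n$; it is these same-type intersections that matter, not $N^+(x)\cap N^-(y)$. Putting $x$ or $y$ in the first (resp.\ last) layer then gives the reachability Lo--Markstr\"om needs. The other $h-1$ vertices come from the counting lemma along a directed path of $k$ clusters in $R$ whose second (resp.\ penultimate) cluster meets that common neighbourhood in linearly many vertices.

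The problem is Step 1, for three reasons.
\begin{enumerate}
\item Your reduction to the balanced blow-up $P_k(t)$ is not valid for factors. $H$ is a \emph{spanning} subdigraph of some $P(t_1,\dots,t_k)$, but in general not of any $P_k(t)$ (e.g.\ the out-star $P(1,2)$). So your tiles must accommodate the unbalanced $P(t_1,\dots,t_k)$.
\item Directed paths are not balanced tiles. Any homomorphic copy of $P(t_1,\dots,t_k)$ in a directed path $u_1\dots u_j$ occupies a window $u_s\dots u_{s+k-1}$, with $t_i$ vertices on $u_{s+i-1}$. Weights $w_s$ giving every $u_p$ the same load $c$ must therefore satisfy $w_1t_1=c$ and $w_1t_2+w_2t_1=c$, which forces $w_2<0$ whenever $t_2>t_1$.
\item Double edges cannot carry the load. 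The bound $|N^+(v)\cap N^-(v)|\ge d^+(v)+d^-(v)-|R|$ only gives the double-edge graph minimum degree about $\eta|R|/2$. For example, take a regular tournament on $B$ together with a set $A$ of $\eta|R|$ clusters joined by double edges to everything. This satisfies $\delta(R)\ge(1+\eta/2)|R|$, yet any matching of double edges covers at most $2|A|$ clusters.
\end{enumerate}
So no matching, Hajnal--Szemer\'edi or LP argument over your family of tiles yields an almost perfect tiling of $R$.

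The missing idea is to go around a long directed cycle. If $V_1\dots V_\ell V_1$ is a directed cycle in $R$, place one copy of $H$ (via the counting lemma) in each of the $\ell$ cyclic windows $V_i\dots V_{i+k-1}$. This uses every cluster exactly $h$ times. Since regularity passes to large subsets, you can repeat this until each cluster has at most $\eta m/3$ uncovered vertices. The paper gets the structure from Ghouila-Houri's theorem: $\delta(R)\ge(1+\eta/2)|R|$ yields a directed Hamilton path in $R$. It need not be a cycle, because $R$ need not be strongly connected. Discarding the windows that would wrap around the ends affects only $2k-2$ clusters, i.e.\ at most $\eps n$ vertices.
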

For many choices of $H$, the minimum degree condition in Theorem~\ref{mainthm} is asymptotically best possible. For example, suppose $H$ is a connected digraph. Let $G$ be the $n$-vertex digraph that is the disjoint union of two complete digraphs $G_1$ and $G_2$ chosen such that 
the sizes of $G_1$ and $G_2$ are as equal as possible under the constraint that $|H|$ does not divide  $|G_1|$ and $|G_2|$. Then $\delta (G)\geq n-4$ and $G$ does not contain an $H$-factor.

A digraph $C$ is an \textit{orientation of a cycle}
(or simply a \textit{cycle})
if its underlying graph is a cycle; in particular, there are no double edges in $C$. A \emph{directed cycle} $C_k$ on $k$ vertices is a cycle $v_1\dots v_kv_1$ where
$v_iv_{i+1}, v_k v_1 \in E(C_k)$ for each $i \in [k-1]$.
We say that a cycle $C$ is \textit{balanced} if, when traversing $C$, the number of forward edges equals the number of backward edges; thus, if $C$ is balanced it must have even order. 
Well-studied balanced cycles include 
 anti-directed cycles. It is straightforward to see that
balanced cycles have homomorphisms into directed paths, and so we obtain the following corollary.

\begin{corollary}\label{cor1}
Let $C$ be a balanced cycle.
Given any $\eta>0$, there exists $n_0=n_0(\eta,C)\in \mathbb N$ such that the following holds for all $n \geq n_0$ divisible by $|C|$.
If $G$ is an $n$-vertex digraph with 
$$\delta(G)\geq (1+\eta)n,$$
then $G$ contains a $C$-factor.
\end{corollary}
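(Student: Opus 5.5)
Corollary~\ref{cor1} will follow directly from Theorem~\ref{mainthm} applied with $H:=C$. The only thing to check is the hypothesis of that theorem: a balanced cycle $C$ has a homomorphism into some directed path. Once this is done, the constant $n_0(\eta,C)$ is just the $n_0(\eta,H)$ supplied by Theorem~\ref{mainthm}. The divisibility condition and the bound $\delta(G)\ge(1+\eta)n$ carry over verbatim.

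To build the homomorphism, write $C=v_1v_2\dots v_kv_1$ as a cyclic sequence of vertices, where $k=|C|$. Define a ``height'' function $h:V(C)\to\mathbb Z$ by $h(v_1):=0$ and, for $i\in[k-1]$,
\[
h(v_{i+1}):=h(v_i)+1 \ \text{ if } v_iv_{i+1}\in E(C), \qquad h(v_{i+1}):=h(v_i)-1 \ \text{ if } v_{i+1}v_i\in E(C).
\]
Exactly one of these cases occurs for each $i$, since $C$ has no double edges.

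The balanced condition is what makes $h$ consistent around the whole cycle. Going once around the cycle, the total change in height is (number of forward edges) $-$ (number of backward edges), which is $0$. Hence the edge between $v_k$ and $v_1$ also satisfies $h(\text{head})=h(\text{tail})+1$. So every edge $xy\in E(C)$ satisfies $h(y)=h(x)+1$.

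Now shift $h$ so that its minimum value is $1$, and let $m$ be its maximum. Let $P=u_1u_2\dots u_m$ be the directed path on $m$ vertices, and set $\phi(x):=u_{h(x)}$. Every edge $xy$ of $C$ maps to $u_{h(x)}u_{h(x)+1}\in E(P)$, so $\phi$ is a homomorphism from $C$ into a directed path.

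There is no real obstacle. The only point needing care is the wrap-around edge $v_kv_1$ (or $v_1v_k$), and that is exactly where the balanced assumption is used.
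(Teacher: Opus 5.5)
Your proposal is correct and follows the paper's route: the paper also obtains Corollary~\ref{cor1} by applying Theorem~\ref{mainthm} with $H:=C$, after observing that balanced cycles have homomorphisms into directed paths. Your height-function argument, including the check of the wrap-around edge using the balanced condition, correctly fills in the step the paper calls straightforward.
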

It would be interesting to determine the minimum degree threshold for forcing a $C$-factor in a digraph for other orientations of a cycle $C$.
For directed cycles this threshold is known. Indeed, 
Wang~\cite{wang2} showed that if $n\in \mathbb N$ is divisible by $3$, then every $n$-vertex digraph $G$ with $\delta (G) \geq (3n-3)/2$ contains a $C_3$-factor.
Zhang and Wang~\cite{zw} then showed the same condition on $\delta (G)$ forces a $C_4$-factor (provided $4$ divides $n$).\footnote{In fact, their result is more refined  and shows there is a unique extremal example.}
Finally, Czygrinow, Kierstead and Molla (see~\cite[Corollary 1.5.7]{theothesis}) proved that 
for any $k \geq 3$, there exists
$n_0 \in \mathbb N$ such that the following holds: if $G$ is a digraph on $n \geq n_0$ vertices, $n$ is divisible
by $k$ and $\delta (G) \geq (3n-3)/2$, then 
$G$ can be partitioned into tiles of order $k$ such that each
tile contains every orientation of a cycle on $k$ vertices; in particular, $G$ contains a $C$-factor for any orientation of a cycle $C$ on $k$ vertices.\footnote{See~\cite{wang3} for a related result.}
Moreover, there are examples that show that
 one cannot lower the bound on $\delta(G)$ here in the case when $C=C_k$; see, e.g.,~\cite[Example 1.3.4]{theothesis}.

 On the other hand, for orientations of a cycle $C$ that are not directed, a result of Lo implies that the minimum degree threshold for forcing a $C$-factor in an $n$-vertex digraph `tends to $(1+o(1))n$' as $|C|$ grows; see~\cite[Corollary 3.3]{lonote} for the precise statement.

\smallskip

There has been much recent interest in minimum semi-degree conditions for forcing a fixed or spanning tree in digraphs; see, e.g.,~\cite{mont, tassio, stein, stein2}. 
Note that Theorem~\ref{mainthm} has implications to tree factors since 
any  tree has a homomorphism into a sufficient long directed path.

\begin{corollary}\label{cor2}
Let $T$ be a tree.
Given any $\eta>0$, there exists $n_0=n_0(\eta,T)\in \mathbb N$ such that the following holds for all $n \geq n_0$ divisible by $|T|$.
If $G$ is an $n$-vertex digraph with 
$$\delta(G)\geq (1+\eta)n,$$
then $G$ contains a $T$-factor.
\end{corollary}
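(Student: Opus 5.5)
Corollary~\ref{cor2} will follow directly from Theorem~\ref{mainthm}. The only thing to verify is that every oriented tree $T$ has a homomorphism into a directed path. Once that holds, we apply Theorem~\ref{mainthm} with $H:=T$ and the given $\eta$, and take $n_0(\eta,T):=n_0(\eta,H)$.

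To build the homomorphism, we define a ``level'' function $\ell:V(T)\to\mathbb Z$ by induction along the tree.
\begin{enumerate}
\item Fix a root $r$ and set $\ell(r):=0$.
\item Process vertices in breadth-first order from $r$. When a vertex $v$ with parent $u$ is reached, set $\ell(v):=\ell(u)+1$ if $uv\in E(T)$, and $\ell(v):=\ell(u)-1$ if $vu\in E(T)$.
\end{enumerate}
This is well defined because $T$ is a tree. Its underlying graph has no cycles, so each vertex other than $r$ has a unique parent. Orientations of trees have no double edges, so exactly one of the two cases applies.

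By construction, every edge $xy\in E(T)$ satisfies $\ell(y)=\ell(x)+1$. Let $m:=\min_v \ell(v)$ and $M:=\max_v\ell(v)$, and let $P=p_m p_{m+1}\dots p_M$ be a directed path on $M-m+1\le |T|$ vertices. Then the map $\phi(v):=p_{\ell(v)}$ sends each edge $xy$ to $p_{\ell(x)}p_{\ell(x)+1}\in E(P)$, so $\phi$ is a homomorphism of $T$ into a directed path.

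There is no real obstacle here; the one point needing care is well-definedness of $\ell$. This is exactly where acyclicity of the underlying graph is used. For a general digraph, consistency of $\ell$ around cycles is what fails, and that is why only balanced cycles are covered in Corollary~\ref{cor1}.
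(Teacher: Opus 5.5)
Your proposal is correct and takes the same approach as the paper, which deduces Corollary~\ref{cor2} directly from Theorem~\ref{mainthm} by noting that every tree has a homomorphism into a sufficiently long directed path. Your level-function construction makes that one-line remark explicit. The key point is that, because the underlying graph of $T$ is a tree, every edge of $T$ is a parent--child edge and so raises the level by exactly one.
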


\subsection{Cycle tilings and an analogue of the El-Zahar conjecture}

Since $\delta(G)\geq 2 \delta ^0(G)$ for every digraph $G$, Theorem~\ref{mainthm} yields the asymptotically sharp minimum semi-degree condition for forcing an $H$-factor in a digraph, for any connected digraph $H$ that has a homomorphism into a directed path.\footnote{The case when $H$ is an anti-directed cycle was already established in~\cite[Observation 1.1]{debiasiomolla}.}

The next theorem asymptotically determines the minimum semi-degree threshold for forcing a $C$-factor for any orientation of an \emph{odd} cycle $C$.

\begin{theorem}\label{elzaharspecial}
Given $k\in \mathbb N$ and $\eta >0$, there exists
$n_0=n_0(k,\eta)\in \mathbb N$ such that the following holds for all $n \geq n_0$ divisible by $2k+1$. Let $C$ be an orientation of a cycle on $2k+1$ vertices. If $G$ is an $n$-vertex digraph with
$$\delta^0 (G) \geq \frac{(k+1)n}{2k+1}+\eta n,$$
then $G$ contains a $C$-factor.
\end{theorem}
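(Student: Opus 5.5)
We use the absorbing method combined with a near-tiling obtained by regularity.

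\emph{Step 1: absorbing set.} We first find a small set $A\subseteq V(G)$ with $|A|\le \eta^2 n$ such that for every $W\subseteq V(G)\setminus A$ with $|W|\le \eta^4 n$ and $|A\cup W|$ divisible by $2k+1$, the digraph $G[A\cup W]$ has a $C$-factor. In the standard Lo--Markstr\"om style this reduces to a local property: every pair of vertices $x,y$ has $\Omega(n^{t})$ ``reachability'' structures of bounded size $t$. These are sets $S$ such that both $G[S\cup\{x\}]$ and $G[S\cup\{y\}]$ contain $C$-factors. It is enough to show that $x$ can replace any vertex $z$ of some copy of $C$. Since $\delta^0(G)\ge (k+1)n/(2k+1)+\eta n>n/2+\eta n$, any two vertices have $\Omega(n)$ common out-neighbours and $\Omega(n)$ common in-neighbours. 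Hence, given a copy of $C$ through $z$ whose two cycle-neighbours of $z$ play fixed in/out roles, we can find many copies where a vertex $z'$ in the common appropriate neighbourhood of $x$ and $z$ is used. Chaining these replacements shows that all vertices lie in a single reachability class. Supersaturation then gives the required $\Omega(n^t)$ counts, and a random selection produces $A$.

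\emph{Step 2: almost perfect tiling.} In $G\setminus A$, which still satisfies $\delta^0\ge ((k+1)/(2k+1)+\eta/2)n$, we apply the diregularity lemma and pass to the reduced digraph $R$. It inherits $\delta^0(R)\ge ((k+1)/(2k+1)+\eta/4)|R|$. We then seek a fractional tiling of $R$ by a fixed ``blow-up-friendly'' structure $F$ covering almost all of $R$. A natural choice for $F$ is a structure on $2k+1$ clusters containing a homomorphic image of $C$ in which every cluster receives equally many vertices; for instance, $F$ could be the square of a directed-ish odd cycle or a tournament-like structure. The key point is that $C$ embeds into a suitable blow-up of $F$ with balanced part sizes. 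Using the blow-up lemma on each copy of $F$ in $R$ then covers all but $\eta^4 n$ vertices by copies of $C$. To obtain the almost-tiling of $R$ we use a Koml\'os-type argument: take a maximum $F$-tiling and, if it leaves a linear-sized uncovered part, use the degree condition to find a local improvement. The threshold $(k+1)/(2k+1)$ is exactly what makes the averaging in such an improvement step work for odd cycles of length $2k+1$, analogously to the El-Zahar extremal construction.

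\emph{Step 3: finishing.} The leftover set $W$ is then absorbed by $A$, which yields the $C$-factor.

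The main obstacle is Step 2. Choosing $F$ and proving that $\delta^0(R)>(k+1)|R|/(2k+1)$ forces an almost perfect fractional $F$-tiling is the genuinely new part, because the orientation of $C$ is arbitrary. A cleaner route I would try first is to note that every orientation of $C_{2k+1}$ admits a homomorphism into a small digraph $D$ with a near-balanced preimage. Examples are a double-edged triangle, or a complete digraph on two vertices together with one extra vertex. It then suffices to tile $R$ almost perfectly with blow-ups of this $D$ having part ratio $k:k:1$, or a similar ratio. For this I would use the semi-degree to find double edges (two vertices with $>n/2$ out-degree) and apply a Hajnal--Szemer\'edi-type or linear-programming duality argument to the underlying graph of double edges, which has minimum degree above $n/(2k+1)$ and so should support the required weighted tiling.
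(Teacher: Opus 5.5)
Your Step 1 is essentially what the paper does: Lemma~\ref{lem:clabs} via Lemma~\ref{lolemma}. For $k=1$ you need triple common neighbourhoods, which $\delta^0\ge(2/3+\eta)n$ provides. The gap is Step 2. You never choose $F$ or prove the almost-perfect tiling. The claim that $(k+1)/(2k+1)$ is ``exactly what makes the averaging work'' in a Koml\'os-type step is unsupported.

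Worse, your concrete ``cleaner route'' fails for every $k\ge 2$. Take $V(G)=A\cup L$ with the following structure:
\begin{itemize}
\item $|A|=n/(2k+1)+3\eta n$ and $A$ independent;
\item all double edges between $A$ and $L$;
\item $G[L]$ the rotational tournament on $\mathbb{Z}_N$, where $N=|L|$ and $i\to j$ iff $j-i\in\{1,\dots,(N-1)/2\}$.
\end{itemize}
Vertices of $L$ have semi-degree $(n+|A|-1)/2\ge (k+1)n/(2k+1)+\eta n$. Vertices of $A$ have semi-degree $N$, which is larger for $k\ge 2$.

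Splitting $L$ into arcs is a legitimate output of Lemma~\ref{lem:reg}. Two arcs span a complete one-directional pair unless they are nearly antipodal. The $O(1)$ half-graph-like pairs per cluster are deleted in the pure digraph. The resulting reduced digraph therefore has double edges \emph{only} between $A$-clusters and $L$-clusters. So $R$ has no double-edged triangle. In any ``double edge $uv$ plus vertex $w$'' one of $u,v$ is an $A$-cluster. Hence a $k{:}k{:}1$ weighted tiling puts at least a $k/(2k+1)$ fraction of its weight on $A$-clusters, which make up only about a $1/(2k+1)$ fraction of $R$.

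The double-edge graph you would feed into a Hajnal--Szemer\'edi or LP argument is here complete bipartite with a small side. It supports no near-perfect weighted tiling. The obstruction is genuine: in this $G$, copies of $C$ have on average barely more than one vertex in $A$. The rest of each copy is an arbitrarily oriented path inside $L$, whose reduced digraph is an oriented graph. Your plan has no mechanism for embedding such paths in a balanced way.

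The missing idea is the mechanism of Lemma~\ref{lem:elzlem}. The paper gets Theorem~\ref{elzaharspecial} as the case $D_1=\dots=D_t=C$, $t=n/(2k+1)$, of Theorem~\ref{thm:elzthm}, and this avoids regularity entirely.
\begin{enumerate}
\item Since $\delta^0>n/2$ survives in random subsets, Theorem~\ref{thm:oriented_ham_cycle} gives a Hamilton cycle of any prescribed orientation in $G[U_1]$ for a random set $U_1$.
\item This cycle is cut into vertex-disjoint paths on $2k$ vertices, each oriented exactly as $C$ minus a vertex.
\item Each path is closed into a copy of $C$ through a distinct vertex of a reserved random set $W_r$.
\end{enumerate}
The closing step is exactly where the threshold enters. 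For any $u,v$ and $\diamond,\circ\in\{+,-\}$ we have $|N^\diamond(u)\cap N^\circ(v)|\ge 2\delta^0(G)-n\ge n/(2k+1)+2\eta n$. This exceeds the number $t=n/(2k+1)$ of cycles, and it persists proportionally inside $W_r$, so the closures can be chosen greedily.

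Running this over a bounded number of rounds on a random partition keeps every vertex with at least $(1/2+\sigma)|U'|$ in- and out-neighbours in the leftover set $U'$. This lets the process iterate to an almost-perfect $C$-tiling, which the absorbing set then completes.
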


Given any orientation of a cycle $C$ of length $2k+1$
and $n \in \mathbb N$ divisible by $2k+1$, let $G$ be the complete $3$-partite   digraph with vertex classes of sizes $\frac{n}{2k+1}-1, \frac{kn}{2k+1}+1$ and $\frac{kn}{2k+1}$. Then $G$ does not contain a $C$-factor and $\delta^0(G) =\frac{(k+1)n}{2k+1}-1$. 
Thus, the minimum semi-degree condition in Theorem~\ref{elzaharspecial} is asymptotically sharp.

Note that the underlying graph of $G$ is an extremal example for the K\"uhn--Osthus theorem~\cite{KuhnO} in the case of $C_{2k+1}$-factors -- so Theorem~\ref{elzaharspecial} aligns with the corresponding behaviour exhibited in the graph setting.
Exact versions of Theorem~\ref{elzaharspecial} are known in the case when $C=T_3$ or $C_3$; indeed, in these cases it is known that
if $G$ is a sufficiently large $n$-vertex digraph with
$\delta^0(G)\geq 2n/3$, then $G$ contains a $C$-factor; see~\cite{cdkm, ckm, treglown}.\footnote{In fact, the condition that $n$ is sufficiently large is not needed for $C=T_3$.}

Rather than proving Theorem~\ref{elzaharspecial}  directly, we will prove a significant generalisation that relates to a classical problem in the graph setting.
The well-known El-Zahar conjecture~\cite{zahar} provides a minimum degree condition for covering a graph with vertex-disjoint cycles.
\begin{conjecture}\cite{zahar} \label{conjelzahar}
    Suppose that $G$ is an $n$-vertex graph and $n_1,\dots, n_r\geq 3$ are integers such that
    $\sum_{i=1}^{r}n_i=n$. If 
    $$\delta(G) \geq \sum_{i=1}^{r} \lceil n_i/2 \rceil, $$
    then $G$ contains $r$ vertex-disjoint cycles whose lengths are $n_1,\dots, n_r$.
\end{conjecture}
In 1998 the El-Zahar conjecture was proven by Abbasi~\cite{abbasi}  for sufficiently large graphs. Note that
in the case when the number of odd $n_i$s is $t \geq 2$, the minimum degree condition in Conjecture~\ref{conjelzahar} can be seen to be best possible by considering the complete $3$-partite graph with vertex classes of sizes 
$t-1$, $(n-t+2)/2$ and $(n-t)/2$.

Our next result gives an El-Zahar-type result in the directed setting.
\begin{theorem}\label{thm:elzthm}
  For any $\eta > 0$, there exists $n_{0} = n_{0}(\eta)\in \mathbb N$ such that the following holds.
  Let $\ell_1,  \dotsc, \ell_t\geq 3$ be integers
  such that $n := \ell_1 + \dotsm + \ell_t \ge n_{0}$, and let $D_1,\dots, D_t$ be  orientations of cycles of lengths $\ell_1,\dots, \ell _t$, respectively.
  If $G$ is an $n$-vertex digraph with
  \[
    \delta^0(G) \ge \frac{n + t}{2} + \eta n,
  \]
  then 
  $G$ contains vertex-disjoint copies of $D_1,\dots, D_t$.
\end{theorem}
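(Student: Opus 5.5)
We prove Theorem~\ref{thm:elzthm} with the regularity--absorption method, arguing differently according to how many of the cycles are short. Fix constants $1/n_0 \ll \eps \ll d \ll \beta \ll \eta$ and a large constant $L = L(\eta)$. Call $D_i$ \emph{short} if $\ell_i \le L$ and \emph{long} otherwise. There are at most $n/L$ long cycles, so their contribution $t_{\mathrm{lg}}$ to $t$ is at most $\eta n/10$. The condition then effectively reads $\delta^0(G) \ge (n+t_{\mathrm{sm}})/2 + \eta n/2$. Two remarks guide the plan. First, short odd cycles need more than $n/2$ (compare the $3$-partite example after Theorem~\ref{elzaharspecial}). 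Second, $(n+t)/2 \ge n/2 + \sum_i 1/2$, which gives about $1/2$ extra degree per cycle. This equals the $\lceil \ell_i/2\rceil$-type excess that El-Zahar uses. Concretely, for $\ell$ odd,
\[
\frac{\ell+1}{2} \cdot \frac{n}{\ell} \cdot \frac{1}{n}\Big|_{\text{per vertex}}
\]
is matched by $(\ell_i+1)/2$ per cycle.

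\textbf{Step 1 (absorbing structure).} Using $\delta^0(G) \ge (1/2+\eta)n$, we build a small absorbing set $A$ with $|A| \le \beta n$. It should have the property that for every small leftover set $W$ of the right sizes, $G[A\cup W]$ can be covered by prescribed cycles. We expect it is simpler to absorb into \emph{one long cycle} than into short ones. We reserve one long cycle, or create one artificially by merging: if all cycles are short, we use the $\eta n$ slack to reserve a few copies. To do this, we embed a long path-like absorber via standard switching. Every pair $x,y$ has $\Omega(n)$ common in/out neighbours because $\delta^0 > n/2$, so we get many absorbing gadgets for each vertex, for every local orientation pattern. If no long cycle exists, we instead absorb into short cycles. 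Each vertex $v$ then needs $\Omega(n^{\ell-1})$ copies of $D_i - $ vertex that extend both with and without $v$, and the degree condition supplies these. Hence the absorbing lemma for a fixed short $D$ follows the usual Lo--Markstr\"om template.

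\textbf{Step 2 (almost cover).} Apply the diregularity lemma to $G - A$ and obtain a reduced digraph $R$. It inherits $\delta^0(R) \ge (1/2 + \eta/2)|R|$, with the extra $t_{\mathrm{sm}}/2n$ fraction also inherited. Long cycles are embedded into regular pairs or chains of clusters using the blow-up lemma. A directed Hamilton cycle exists in $R$ by Ghouila-Houri (and also an anti-directed one by Diwan et al.), and along it we route long oriented paths and cycles of arbitrary orientation. This works because any orientation of a path embeds into a super-regular pair with both directions dense, which holds in the double edges of $R$ that $\delta^0 > |R|/2$ forces in abundance. Short cycles are grouped by type; since there are at most $2^L L$ types, each type is handled by a fractional tiling of $R$. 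For even short cycles a perfect matching of double edges in $R$ suffices. For odd $D_i$, we need triangles of double edges in $R$ carrying suitable weights. We obtain these from the fractional version of the El-Zahar/K\"uhn--Osthus bound: the degree surplus $t_{\mathrm{odd}}/(2n)$ guarantees a fractional tiling by double-edge triangles and double edges, with the triangle proportion at least the proportion of odd cycles. Then blow-up converts the fractional tiling into an almost-perfect tiling with the prescribed cycles, leaving $o(n)$ uncovered vertices.

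\textbf{Step 3 (finish).} The leftover is absorbed by $A$; divisibility is fine since the total length is exactly $n$ and we can choose which cycles are embedded where.

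The main obstacle is Step 2 for odd short cycles. We must show that $\delta^0(R) \ge (1/2+\eta/2 + \tau)|R|$ gives a fractional tiling in the double-edge graph of $R$ in which odd-cycle-capable components (triangles) receive weight at least $2\tau$. The double-edge graph has minimum degree only about $2\tau|R|$ beyond $0$ in the worst case. We would first try a LP duality/Farkas argument on the underlying graph of double edges, together with single edges used as "half" contributions, mirroring Abbasi's stability analysis.
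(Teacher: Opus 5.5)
The gap is in Step 2, and the claim you plan to prove there is false, not merely unproved. You want the \emph{double} edges of $R$ to carry the short cycles: double-edge triangles for odd $D_i$, and a perfect matching of double edges for even $D_i$. Minimum semi-degree does not provide this.

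Let $R$ have vertex set $A\cup B$ with $|A|=ak$ and $|B|=(1-a)k$. Put all double edges between $A$ and $B$, and a regular tournament inside each part. Then $\delta^0(R)\approx\min\{(1+a)/2,\,1-a/2\}\,k$. The double-edge graph of $R$, however, is $K_{ak,(1-a)k}$, which is triangle-free and has no matching covering more than $2ak$ vertices.
\begin{itemize}
\item With $a=0.4$ you get $\delta^0(R)\approx 0.7k>(2/3+\eta)k$, which is the hypothesis for a $C_3$- or $T_3$-factor, yet there is no double-edge triangle at all.
\item With $a=1/4+3\eta$ the hypothesis for a factor of directed $4$-cycles holds, yet double edges cover only about half of $R$.
\end{itemize}
A blow-up $G$ of this $R$ has exactly such a reduced digraph, and $G$ still has the required factors. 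For instance, it is tiled by triangles $a_1\to a_2\to b\to a_1$ and $a\to b_1\to b_2\to a$, which use single edges. So any fractional argument must use single edges, in a way that depends on the orientation of each of the up to $L2^L$ cycle types. Your remark about single edges as ``half'' contributions is exactly the part left unspecified.

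Two further problems are worth noting. Embedding cycles of length $\Theta(n)$ along a Hamilton cycle of $R$ via the blow-up lemma does not work as stated, because the blow-up lemma does not handle structures spanning $R$. In Step 1, the choice of which cycle type to absorb matters: for the directed triangle the absorbing property genuinely fails below roughly $2n/3$, e.g.\ in a blow-up of $C_3$ with fairly dense parts, by a lattice obstruction.

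The idea you are missing is how the surplus $t/2$ is actually spent: each cycle needs only \emph{one} extra vertex with prescribed adjacencies to the two ends of a path. The paper uses no regularity at all.
\begin{itemize}
\item In Lemma~\ref{lem:elzlem} it randomly partitions $V(G)$ into $W_1,\dots,W_{R+1}$ and builds the short cycles in $R$ rounds.
\item In each round it takes a random set $U_1$ of size $\sum_\ell(\ell-1)q_\ell$ from the current leftover. Since $\delta^0(G[U_1])\ge(1/2+\sigma/2)|U_1|$, Theorem~\ref{thm:oriented_ham_cycle} gives a Hamilton cycle in $G[U_1]$ of a suitable orientation. This cycle splits into paths on $\ell_i-1$ vertices, each with exactly the orientation required.
\item Because $d^\pm(v,W_r)\ge(|W_r|+|q|)/2+\sigma|W_r|$, any two endpoints have at least $|q|+2\sigma|W_r|$ common neighbours in $W_r$ for every choice of directions. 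So all $|q|$ paths close greedily, and the leftover keeps relative semi-degree $1/2+\sigma$.
\end{itemize}
This handles all orientations and parities uniformly.

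Long cycles are then placed exactly by recursive random splitting together with the same Hamilton cycle theorem (Lemma~\ref{lem:large_cycles}). So no absorber is needed when long cycles cover at least $\gamma n$ vertices. Otherwise the paper absorbs with a single frequent cycle type $C^*$. It takes $C^*$ to be a triangle only when nearly all cycles are triangles, which guarantees $\delta^0(G)\ge(2/3+\eta/2)n$.
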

Notice that Theorem~\ref{elzaharspecial} is a special case of Theorem~\ref{thm:elzthm}.
If each cycle $D_i$ has odd length, then
the minimum semi-degree condition in Theorem~\ref{thm:elzthm} agrees with
the degree condition in Conjecture~\ref{conjelzahar} up to the $\eta n$ term. Therefore by considering the digraph analogue of the extremal example for Conjecture~\ref{conjelzahar}, one sees that Theorem~\ref{thm:elzthm} is asymptotically best possible for odd cycles. 
It would be extremely interesting to establish a full digraph analogue of the El-Zahar conjecture that is sharp for all choices of the cycles $D_i$.

\subsection{Ore-type results for transitive tournament factors}
Given a graph $G$ and $x \in V(G)$, we write $d_G(x)$ for the \textit{degree} of $x$ in $G$.
Given a digraph $G$ and  $x \in V(G)$, we write $d^+_G(x)$ and $d^-_G(x)$ for the \textit{out-degree} and \textit{in-degree}  of $x$ in $G$, respectively. 

A result of Kierstead and Kostochka~\cite{kier} on equitable colourings 
yields the following Ore-type version of Theorem~\ref{hs}.
\begin{theorem}\cite{kier} \label{orehsz}
    Let $n \in \mathbb N$ be divisible by $r \in \mathbb N$.
    If $G$ is an $n$-vertex graph so that 
    $$d_G(x)+d_G(y) \geq 2(1-1/r)n-1$$ 
for all non-adjacent $x\not =y \in V(G)$, then $G$ contains a $K_r$-factor.
\end{theorem}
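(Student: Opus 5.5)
Theorem~\ref{orehsz} is equivalent to an equitable colouring statement for the complement, and the plan is to prove that statement by the Kierstead--Kostochka recolouring argument. Let $H:=\overline{G}$ be the complement of $G$ and put $k:=n/r$. For non-adjacent $x\neq y$ in $G$, that is, for every edge $xy$ of $H$, we have
$$d_H(x)+d_H(y)=2(n-1)-d_G(x)-d_G(y)\le 2n-2-2(1-1/r)n+1=2k-1.$$
A $K_r$-factor in $G$ is exactly a partition of $V(G)$ into $k$ independent sets of $H$, each of size $r$. In other words, it is an \emph{equitable $k$-colouring} of $H$. It therefore suffices to prove the following Ore-type equitable colouring theorem: \emph{if every edge $xy$ of a graph $H$ satisfies $d_H(x)+d_H(y)\le 2k-1$, then $H$ has an equitable $k$-colouring.} Since $k$ divides $n$, all colour classes then have size exactly $r$.

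I would prove this by induction on $|E(H)|$. When $H$ has no edges the result is trivial. Otherwise remove an edge $xy$; the degree-sum condition still holds, so $H-xy$ has an equitable $k$-colouring. If $x$ and $y$ receive different colours we are done. Otherwise, recolour $x$, using $d(x)+d(y)\le 2k-1$: at least one of $x,y$, say $x$, has degree at most $k-1$, so some class $W$ contains no neighbour of $x$. Moving $x$ into $W$ gives a proper \emph{nearly equitable} colouring, in which one class $V^-$ has $r-1$ vertices, one class $V^+$ has $r+1$, and all others have $r$. The main lemma is that a nearly equitable colouring can always be made equitable.

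To prove the lemma I would follow the Kierstead--Kostochka scheme. Form an auxiliary digraph on the colour classes with an arc $X\to Y$ whenever some $z\in X$ has no neighbour in $Y$, so that $z$ can move to $Y$. Let $\mathcal A$ be the set of classes from which $V^-$ is reachable, and let $\mathcal B$ be the remaining classes. If $V^+\in\mathcal A$, we shift vertices along a path from $V^+$ to $V^-$ and are finished. Otherwise we count edges between $A:=\bigcup\mathcal A$ and $B:=\bigcup\mathcal B$. Each vertex of $B$ has a neighbour in every class of $\mathcal A$, and we consider the terminal class in $\mathcal A$ together with its \emph{solo} vertices, meaning vertices with exactly one neighbour in some class. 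We then either find a sequence of swaps that enlarges $\mathcal A$ or moves a vertex into it, or we derive a contradiction with the degree bound. In the original proof, where $\Delta(H)\le k-1$, this contradiction is a discharging count.

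I expect the Ore-type version of this counting step to be the main obstacle. With a maximum degree bound every vertex of $B$ has few neighbours, but here only the sums over edges are controlled. I would first exploit the fact that the vertices of degree at least $k$ form an independent set in $H$: any two of them have degree sum at least $2k$, so they cannot be adjacent. Consequently every edge between $A$ and $B$ has a low-degree endpoint, one of degree at most $k-1$. I would then redo the discharging with weights split along each edge according to the degree sum $\le 2k-1$, aiming to show that a maximal counterexample forces some edge $uv$ with $d(u)+d(v)\ge 2k$.
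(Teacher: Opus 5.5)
Your first paragraph is correct, and it is all the paper needs. With $H=\overline{G}$ and $k=n/r$, every edge of $H$ has degree sum at most $2k-1$, and a $K_r$-factor of $G$ is the same thing as an equitable $k$-colouring of $H$. This is exactly the Kierstead--Kostochka theorem (degree sum at most $2s+1$ on every edge implies an equitable $(s+1)$-colouring) with $s=k-1$. The paper proves nothing here; it simply quotes \cite{kier}. Your edge induction is also fine, as is moving the low-degree endpoint $x$ into a class free of its neighbours. The gap is your ``main lemma'': that a nearly equitable colouring can be made equitable under the Ore-type hypothesis. That lemma is the entire content of \cite{kier}. Your last two paragraphs set up the framework (accessible classes, terminal classes, solo vertices) and then state an intention (``redo the discharging with weights split along each edge''), but they give no argument.

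The obstruction is concrete. In the $\Delta\le k-1$ setting, the counting you allude to rests on every vertex having degree at most $k-1$. For instance, a vertex $y\in B$ must meet each of the $|\mathcal A|$ classes of $\mathcal A$ using at most $k-1$ edges, and this forces it to have a unique (solo) neighbour in many of them. Under your hypothesis a single vertex can have degree up to $2k-2$, provided all its neighbours have degree $1$. So a vertex of $B$ can have two neighbours in every class of $\mathcal A$, since $|\mathcal A|\le k-1$ when $V^+\notin\mathcal A$, and then it has no solo neighbours at all. The fact that vertices of degree at least $k$ are pairwise non-adjacent is true, but on its own it does not give back these inequalities. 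Handling this is precisely where \cite{kier} must do new work beyond the maximum-degree proof. Until you carry out an Ore-type version of the recolouring and counting step, the proposal is a plan rather than a proof. The alternative is to stop after your reduction and apply \cite{kier} to $\overline{G}$, which is what the paper does.
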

 Note that Theorem~\ref{orehsz} immediately implies Theorem~\ref{hs} and the Ore-type condition cannot be lowered. The $r=3$ case of Theorem~\ref{orehsz} was established earlier by Enomoto~\cite{enomoto}
 and Wang~\cite{wang}.

 Ore-type results have also been studied for digraphs. For example, a well-known result of Woodall~\cite{wood} from 1972 states the following: if $G$ is a digraph on $n \geq 3$ vertices so that
$d^+_G(x) + d^-_G (y) \geq n$ 
for every $x \not = y \in V(G)$ with $xy \not \in E(G)$, then $G$ contains a directed Hamilton cycle.

The next result similarly provides an Ore-type condition for forcing a $T_3$-factor in a digraph. 

\begin{theorem} \label{orethm}
Let $n \in \mathbb N$ be divisible by $3$.
 Suppose that $G$ is an $n$-vertex digraph so that for every $x \not = y\in V(G)$ with $xy \notin E(G)$ we have 
\begin{align}\label{conmin}
 \ d ^+_G (x)+ d^-_G(y) \geq  4n/3-1.
\end{align}
Then $G$ contains a $T_3$-factor.
\end{theorem}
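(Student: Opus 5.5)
The plan is a Corrádi--Hajnal/Enomoto-style maximal-tiling and switching argument, carried out directly in the digraph. There is no largeness assumption, so absorbing and regularity methods are not available. First record what \eqref{conmin} gives for the various pair types. If $x,y$ are non-adjacent in the underlying graph, \eqref{conmin} holds for both $(x,y)$ and $(y,x)$, so $d_G(x)+d_G(y)\ge 8n/3-2$. If only the edge $yx$ is present, we still get $d^+_G(x)+d^-_G(y)\ge 4n/3-1$. Call a vertex \emph{low} if $d^+_G(x)<2n/3$ or $d^-_G(x)<2n/3$. By \eqref{conmin}, all the out-low vertices $x$ have $xy\in E(G)$ towards every in-low vertex $y$ (with a similar statement within each type), so the low vertices are highly structured. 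The bulk of the argument should go through by replacing minimum degree with these pairwise inequalities, applied to carefully chosen pairs.

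Let $\mathcal M$ be a $T_3$-tiling of maximum size, and suppose $U:=V(G)\setminus V(\mathcal M)\neq\emptyset$. Then $|U|\ge 3$ and $U$ spans no $T_3$. Among maximum tilings, choose $\mathcal M$ lexicographically optimal with respect to a potential, for example the number of edges inside $G[U]$ and then the number of edges from $U$ to particular triangles. The key steps are as follows.

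\begin{enumerate}
\item Using \eqref{conmin} on a non-edge inside $U$ (which exists, since $U$ spans no $T_3$ and $|U|\ge 3$ would otherwise force one when $G[U]$ is dense), average the quantity $d^+(x)+d^-(y)$ over the triangles of $\mathcal M$. This yields a triangle $T=abc\in\mathcal M$ with $e^+(x,T)+e^-(y,T)\ge 5$ (the bound $4n/3-1$ spread over $n/3$ triples leaves about $4$ per triangle plus slack from $U$).
\item Show that $e^+(x,T)+e^-(y,T)\ge 5$ forces a vertex $v\in T$ such that one of $\{x\}\cup(T-v)$ or $\{y\}\cup(T-v)$ spans a $T_3$. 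A case check over the orientations of $T_3$ should give either a direct augmentation (two disjoint $T_3$'s in $T\cup\{x,y,z\}$ for a suitable third vertex $z\in U$) or a switch that frees $v$ while keeping $|\mathcal M|$.
\item Show that the switch increases the potential, contradicting the choice of $\mathcal M$, unless $v$ has the same adjacency pattern to $U$ as the vertex it replaced. Iterating these switches produces a large set of \emph{exchangeable} vertices. Applying \eqref{conmin} to pairs inside this set should either give an augmentation or pin $G$ down to near-extremal structure (a near-tripartite structure or an independent set of size about $n/3+1$). Such structure is then excluded directly by \eqref{conmin}, whose $-1$ is exactly the tight constant.
\end{enumerate}

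I expect step 3 to be the main obstacle, specifically the case in which every triangle of $\mathcal M$ receives exactly the ``average'' number $4$ of relevant edges, so that no single triangle gives an augmentation. This is where an undirected proof of Theorem~\ref{orehsz} uses the equitable-colouring machinery, namely accessible classes and the Kierstead--Kostochka digraph of movable vertices. I would try to import that structure: define an auxiliary digraph on $\mathcal M\cup\{U\}$ with an arc $T\to T'$ when some vertex of $T$ can be moved into $T'$ after a swap. Then take the set $\mathcal A$ of triangles from which $U$ is reachable, and double count \eqref{conmin} between $U$ and $V(\mathcal A)$ and between $V(\mathcal A)$ and the rest.

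The added directed difficulty is that edges that are present but oriented the wrong way still count toward degrees. For this I plan to use the observation that a double edge between $x$ and $T$, or a vertex with three neighbours in $T$ oriented non-cyclically, always yields a $T_3$. Hence only ``cyclic'' adjacency patterns can resist switching, and these can be counted separately.
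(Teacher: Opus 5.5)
Your proposal is a plan rather than a proof, and the part you leave open is precisely the hard part.

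\textbf{Step 3 is the whole difficulty.} Ruling out a maximum $T_3$-tiling with nonempty leftover set $U$ is, already for the underlying graph, the entire content of the Kierstead--Kostochka theorem (Theorem~\ref{orehsz} with $r=3$). Your Step~3 proposes to redo that accessibility/equitable-colouring machinery, with extra directed complications, but none of it is carried out (``should'', ``I would try'').

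\textbf{Step 1 is unjustified as stated.} The edges that $x$ and $y$ have inside $U$ (up to $2|U|-4$ of them) reduce, rather than increase, what must go into $V(\mathcal M)$. The resulting lower bound on the average of $d^+_G(x,V(T))+d^-_G(y,V(T))$ over the $(n-|U|)/3$ tiles exceeds $4$ only if $d^+_G(x,U)+d^-_G(y,U)<4|U|/3-1$, and you never establish this.

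\textbf{Step 2 does not augment.} Even when a tile with value at least $5$ exists, Step~2 only yields a swap, so everything rests on Step~3.

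The missing idea is that your own first observation already settles the covering problem. If $x,y$ are non-adjacent in the underlying graph $G'$, you showed $d_G(x)+d_G(y)\ge 8n/3-2$. Since $d_G(v)\le 2d_{G'}(v)$, this gives $d_{G'}(x)+d_{G'}(y)\ge 4n/3-1$. Theorem~\ref{orehsz} then gives a $K_3$-factor of $G'$, i.e.\ a $\{T_3,C_3\}$-factor of $G$.

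Take such a factor with the most copies of $T_3$, and suppose it has a tile on $\{x,y,z\}$ with $xy,yz,zx\in E(G)$.
\begin{enumerate}
\item Maximality gives $yx,zy,xz\notin E(G)$. So \eqref{conmin} applies to the pairs $(y,x),(z,y),(x,z)$, and the three inequalities sum to at least $4n-3$.
\item Hence the out-degrees (say) of $x,y,z$ sum to at least $2n-1$. Discarding the three internal edges, at least $2n-4>6(n/3-1)$ edges go to the other tiles.
\item So some tile $T$ receives at least $7$ of these edges. Then one vertex of the cyclic triangle sends edges to all of $V(T)$, and the other two have a common out-neighbour $w\in V(T)$.
\item The other two vertices together with $w$, and the first vertex together with $V(T)\setminus\{w\}$, span two disjoint copies of $T_3$. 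This contradicts the choice of the factor.
\end{enumerate}
Because the starting object is already a factor, with no uncovered set, this local count has slack and no switching machinery is needed.
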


Note that the Ore-type condition in Theorem~\ref{orethm} cannot be lowered. For example, consider the digraph $G$ consisting of vertex classes $V_1, V_2$ together with a vertex $w$ such that:
$|V_1|=2n/3-1$ and $|V_2|=n/3$; there are all possible (double) edges (i) in $V_1$, (ii) between $V_1$ and $V_2$ and (iii) between $w$ and $V_2$. 
Then for any  $x \not = y\in V(G)$ such that $xy \notin E(G)$ we have 
$ d ^+_G (x)+ d^-_G(y) \geq  4n/3-2$. However, $G$ does not contain a $T_3$-factor since $w$ does not lie in a copy of $T_3$.

Our next result asymptotically generalises Theorem~\ref{orethm} to $T_r$-factors for all $r\geq 2$. 
\begin{theorem} \label{orethm2}
Let $r \geq 2$ be an integer and let $\eta >0$. There exists $n_0=n_0(r,\eta) \in \mathbb N$ such that the following holds for all $n \geq n_0$ divisible by $r$.
 Suppose that $G$ is an $n$-vertex digraph  so that for any $x \ne y\in V(G)$ such that $xy \notin E(G)$ we have that 
\begin{align}\label{oreuseful}
 \ d ^+_G (x)+ d^-_G(y) \geq  2(1-1/r+\eta)n.
\end{align}
Then $G$ contains a $T_r$-factor.
\end{theorem}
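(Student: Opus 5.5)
I would use the absorbing method in its standard form for asymptotic factor results. First, let me understand what the Ore-type condition (\ref{oreuseful}) buys us structurally.

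\emph{Structural consequences of (\ref{oreuseful}).} Call a vertex $x$ \emph{out-small} if $d^+_G(x) < (1-1/r+\eta/2)n$, and \emph{in-small} analogously. If $x$ is out-small and $y$ is in-small with $xy\notin E(G)$, then (\ref{oreuseful}) fails. Hence every out-small vertex sends an edge to every in-small vertex. Moreover, for any non-edge $xy$ at least one of $d^+(x)$, $d^-(y)$ is at least $(1-1/r+\eta)n$.

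\emph{Reduction.} I do not expect $\delta^0(G)$ to be large, since a vertex may have tiny out-degree provided it sends edges to almost everyone who has small in-degree. The plan is to proceed as follows.

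\textbf{(1) Absorbing set.} Build a small set $A$ such that for any $W\subseteq V(G)\setminus A$ with $|W|\le \gamma n$ and $r$ dividing $|A|+|W|$, the digraph $G[A\cup W]$ has a $T_r$-factor. Via the Lo--Markström lemma, it suffices to show that every $r$-set $S$ has $\Omega(n^{r^2})$ absorbers. Equivalently, any two vertices $u,v$ are reachable, in the sense that $T_r$-copies can swap $u$ for $v$.

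\textbf{(2) Almost-perfect tiling.} On $G-A$, which still satisfies (\ref{oreuseful}) with $\eta/2$, find a $T_r$-tiling covering all but $\gamma n$ vertices. Do this through regularity: the reduced digraph $R$ inherits an Ore-type condition on most pairs. One then needs a fractional $T_r$-tiling of $R$, or more precisely a perfect tiling by some blow-up-friendly structure. I would show that the Ore condition implies every cluster lies in $\Omega(n^{r-1})$ copies of $T_r$. I would then get an almost-perfect fractional $T_r$-tiling by the LP duality / Hajnal--Szemerédi-type argument, using a greedy extension: a transitive tournament $x_1\dots x_j$ can be extended as long as the common neighbourhood in the right directions has size $\ge (1-j/r)n$ roughly.

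The key degree computation is the following. For $T_j$ already built, the number of vertices $z$ with $x_iz\in E$ for all $i$ (appending $z$ as a sink) is at least $\sum_i d^+(x_i)-(j-1)n$. Because small out-degree vertices are well connected to small in-degree vertices, I will always choose whether to append $z$ as sink or as source. The choice depends on whether the current vertices are out-large or in-large. Using both options, the Ore inequality summed over $x_i$ (as $x$) and $z$ (as $y$) gives enough room.

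\textbf{(3) Finish.} Absorb the leftover into $A$.

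\textbf{Main obstacle.} I expect the hardest part to be step (2), together with the reachability in (1). Vertices of very low out-degree (but high in-degree) must be placed only as sinks of their $T_r$. A counting argument must ensure that there are not too many such vertices that all need to occupy sink positions simultaneously. I would handle this by partitioning $V$ into out-small vertices $X^-$, in-small vertices $Y^-$, and the rest. Note that $X^-\cap Y^-$ forces $G[X^-\cap Y^-]$ to be nearly complete in both directions by (\ref{oreuseful}). I would then treat the extremal-like configurations via a weighted fractional tiling in which each $T_r$ uses these classes in compatible positions, verifying feasibility via Farkas' lemma from the summed Ore inequalities.
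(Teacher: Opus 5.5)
Your outline leaves the central step unproved, so there is a genuine gap.

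\textbf{The gap.} In step (2) you establish two things: every cluster lies in $\Omega(n^{r-1})$ copies of $T_r$, and a single $T_j$ can be extended greedily by a new sink or source. Both are local facts. They do not give an almost-perfect (fractional) $T_r$-tiling: a complete bipartite digraph with very unequal parts has every vertex in many copies of $T_2$, yet no large $T_2$-tiling. The LP-duality/Farkas verification ``from the summed Ore inequalities'' is exactly what needs proving. You also leave your own main obstacle (too many vertices competing for sink positions) unresolved. Step (1) has the same defect: you assert that every pair is reachable but do not show it, and it needs a case analysis.

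\textbf{The missing idea.} For the tiling, the Ore condition can be replaced by a cruder dichotomy. By (\ref{oreuseful}), for all $x\neq y$, either $xy,yx\in E(G)$, or one of $x,y$ has dominant degree $d^*:=\max\{d^+,d^-\}\ge(1-1/r+\eta)n$. Hence the low-$d^*$ set $S$ (essentially your $X^-\cap Y^-$) is a complete digraph. Every other vertex can be used through $d^*$ alone, as a source or a sink according to which degree dominates.

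\textbf{Almost-perfect tiling.} With this dichotomy the paper proves Theorem~\ref{orethm3} by a Koml\'os-type augmentation (Lemma~\ref{orelemma}):
\begin{enumerate}
\item Suppose a maximum $T_r$-tiling $\mathcal M$ misses many vertices. The uncovered part is $T_r$-free.
\item By Fact~\ref{simplefact} and the dichotomy, at least $\gamma^2 n/3$ uncovered vertices have dominant degree at least $(1-1/r+\gamma)|V(\mathcal M)|$ into $V(\mathcal M)$.
\item Each such vertex completes at least $\gamma|V(\mathcal M)|$ tiles to a $T_{r+1}$, which gives a larger $\{T_r,T_{r+1}\}$-tiling.
\item Blowing up by $r$ (Fact~\ref{fact2}) turns this back into a $T_r$-tiling, and $O(\gamma^{-2})$ iterations suffice.
\end{enumerate}
This is what dissolves your sink-position worry.

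\textbf{Reduced digraph.} To run the iteration on the reduced digraph, the paper needs the special reduced digraph $R^*$, with loops at clusters that induce complete digraphs. Without the loops, two copies of an $S$-cluster in a blow-up are non-adjacent while neither has large $d^*$. The hypothesis of Lemma~\ref{orelemma} then fails.

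\textbf{Absorption.} Reachability comes from the split into $S$ and its complement $L$:
\begin{itemize}
\item pairs in $S$ use the completeness of $G[S]$;
\item pairs in $L$ use the intersection of their dominant neighbourhoods;
\item a mixed pair $x\in S$, $y\in L$ is linked through some $z\in L$ with $xz\notin E(G)$, so that (\ref{oreuseful}) applies. This is why Lemma~\ref{lolemma} is used with $t=2$.
\end{itemize}
When $|S|$ is small, the paper instead first covers $S$ greedily using Fact~\ref{simplefact2}.

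\textbf{A side correction.} Your premise that a vertex may have tiny out-degree is false for $r\ge 3$. If $xy\notin E(G)$ then $d^+(x)\ge 2(1-1/r+\eta)n-d^-(y)>(1-2/r)n$.
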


Note that one cannot replace the $\eta n$ term in Theorem~\ref{orethm2} with a $-2$. 
 Indeed, consider the 
complete $r$-partite digraph $G$ with $r-2$ vertex classes of size $n/r$, one vertex class of size $n/r-1$ and one vertex class of size $n/r+1$. Then $G$ does not contain a $T_r$-factor and 
$d ^+_G (x)+ d^-_G(y) \geq   2(1-1/r)n-2$ for all   $x ,y\in V(G)$.

\subsection*{Notation}
Given a digraph $G$ and  $x \in V(G)$, we write $N^+_G(x)$ and $N^-_G(x)$ for the \textit{out-neighbourhood} and \textit{in-neighbourhood}  of $x$ in $G$, respectively.
For disjoint sets $A,B \subseteq V(G)$, we write $e_G(A,B)$ for the number of edges in $G$ with startpoint in $A$ and endpoint in $B$.
Given a set $X\subseteq V(G)$, we write $G[X]$ for the subdigraph of $G$ induced by $X$ and let $G\setminus X:=G[V(G)\setminus X]$. Given 
$x_1,\dots, x_t \in V(G)$, we
set $G[x_1,\dots, x_t]:=G[\{x_1,\dots, x_t\}]$. 
Given a set $X\subseteq V(G)$ and $v  \in V(G)$, we define $d^+_G(v,X):=|N^+_G(v) \cap X|$ and $d^-_G(v,X):=|N^-_G(v) \cap X|$.

If $G$ is a digraph with loops and $z \in V(G)$ we take the convention that $d^+_G(z)$ (resp. $d^-_G(z)$) is the number of out-neighbours (resp. in-neighbours)  of $z$ in $G$ excluding $z$ itself.

Let $\mathcal H$ be a collection of digraphs and $G$ be a digraph. We say that $G$ contains an \emph{$\mathcal H$-factor} if $G$ contains a collection of vertex-disjoint copies of elements
from $\mathcal H$ that together cover all the vertices of $G$. 

The constants in the hierarchies used to state our results are chosen from right to left.
For example, if we claim that a result holds whenever $0< a\ll b\ll c\le 1$, then 
there are non-decreasing functions $f:(0,1]\to (0,1]$ and $g:(0,1]\to (0,1]$ such that the result holds
for all $0<a,b,c\le 1$  with $b\le f(c)$ and $a\le g(b)$.  
Note that $a \ll b$ implies that we may assume in the proof that, e.g., $a < b$ or $a < b^2$.

\subsection*{Organisation of the paper}
The paper is organised as follows. In Section~\ref{sec:pre} we introduce some useful results, including the diregularity lemma~\cite{AlonShapira04} and an absorbing result of Lo and Markstr\"om~\cite{lomark}.
We then prove Theorem~\ref{mainthm} in Section~\ref{sec:main}.
We prove Theorem~\ref{thm:elzthm} in Section~\ref{sec:elzahar}.
In Section~\ref{sec:triangle} we prove Theorem~\ref{orethm} before proving Theorem~\ref{orethm2} in Section~\ref{sec:ore2}.
Finally, we raise some open problems in Section~\ref{seccon}.


\section{Useful results}\label{sec:pre}
\subsection{The diregularity lemma}

Let $G$ be a digraph and $A,B\subseteq V(G)$ be disjoint. The \emph{density of~$(A,B)$} is defined by $d_G(A,B):=\tfrac{e_G(A,B)}{|A||B|}$; so $d_G(A,B)$ is not necessarily equal to~$d_G(B,A)$.
Given~$\eps>0$  we say that~$(A,B)$ is~\emph{$\eps$-regular (in~$G$)} if for all subsets~$A'\subseteq A$ and~$B' \subseteq B$ with~$|A'|> \eps |A|$ and~$|B'|> \eps |B|$ we have $$|d_G(A,B)-d_G(A', B')|<\eps\,.$$

We will make use of the following well-known property.
\begin{proposition}\label{prop:regsubset}
Suppose that~$0<\eps < \xi \leq 1/2$.
Let $(A,B)$ be~$\eps$-regular with density~$d$. 
If~$A'\subseteq A$ and~$B'\subseteq B$ with~$|A'|\geq \xi |A|$ and $|B'|\geq \xi |B|$ then~$(A',B')$ is~$\eps/\xi$-regular with density at least~$d-\eps$. \qed 
\end{proposition}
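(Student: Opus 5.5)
The statement is Proposition~\ref{prop:regsubset}. I will check the two claimed properties of $(A',B')$ directly from the definition of $\eps$-regularity of $(A,B)$: first the density bound, then the regularity of the pair.

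\emph{Density.} Since $\xi>\eps$, we have $|A'|\ge \xi|A|>\eps|A|$ and $|B'|\ge\xi|B|>\eps|B|$. So $(A',B')$ is one of the pairs that the $\eps$-regularity of $(A,B)$ controls, and therefore $|d_G(A',B')-d|<\eps$. In particular $d_G(A',B')\ge d-\eps$.

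\emph{Regularity.} Take $A''\subseteq A'$ and $B''\subseteq B'$ with $|A''|>(\eps/\xi)|A'|$ and $|B''|>(\eps/\xi)|B'|$. Because $|A'|\ge\xi|A|$, this gives $|A''|>(\eps/\xi)\,\xi|A|=\eps|A|$, and likewise $|B''|>\eps|B|$. Applying the $\eps$-regularity of $(A,B)$ to this pair gives $|d_G(A'',B'')-d|<\eps$. Combining with the density step via the triangle inequality,
\[
|d_G(A'',B'')-d_G(A',B')|<2\eps .
\]

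\emph{The obstacle.} The regularity parameter required is $\eps/\xi$, not $2\eps$. Since $\xi\le 1/2$, we have $\eps/\xi\ge 2\eps$, so the bound $2\eps\le\eps/\xi$ suffices. I will spell this inequality out explicitly, because it is the only point where the hypothesis $\xi\le1/2$ is used.
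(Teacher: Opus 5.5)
Your proof is correct. It is the standard slicing-lemma argument, which the paper omits as a well-known fact: apply the $\eps$-regularity of $(A,B)$ once to $(A',B')$ and once to $(A'',B'')$ (both are admissible because $\xi>\eps$ and $|A''|>(\eps/\xi)|A'|\ge\eps|A|$), then use the triangle inequality together with $2\eps\le\eps/\xi$.
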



We now state the  \emph{degree form} of the diregularity lemma. 

\begin{lemma}[Diregularity lemma~\cite{AlonShapira04}]\label{lem:reg}

Given any~$\eps\in (0,1)$ and~$\ell_0\in \mathbb N$, there exist $L=L(\eps,\ell_0) \in \mathbb N$ and $n_0=n_0(\eps, \ell_0)\in \mathbb N$ such that the following holds for all $n \geq n_0$. 
Let~$G$ be an $n$-vertex digraph   and let~$d\in[0,1]$. 
Then, there is a partition~$\{V_0, V_1,\dots, V_\ell\}$ of~$V(G)$ with~$\ell_0<\ell<L$ and a spanning subdigraph~$G'$ of $ G$ such that
\begin{enumerate}[label={\rm (\alph*)}]
    \item \label{it:trash}$|V_0|\leq \eps n$; 
    \item $|V_i|=|V_1|$ for every~$i\in [\ell]$; 
    \item for every $v \in V(G)$, $d^{+}_{G'}(v)>d^{+}_G(v)-(d+\eps)n$ and~$d^{-}_{G'}(v)>d^{-}_G(v)-(d+\eps)n$;
    \item $e(G'[V_i])=0$ for every~$i\in [\ell]$; 
    \item \label{it:reg} for every distinct~$i,j\in [\ell]$, the pair~$(V_i,V_j)$ is~$\eps$-regular in~$G'$ with density either~$0$ or at least~$d$. 
\end{enumerate}
\end{lemma}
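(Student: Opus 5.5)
This is the degree form of the diregularity lemma, which the paper cites from Alon and Shapira rather than proving. I would derive it from the standard (non-degree) diregularity lemma in the usual way the degree form of Szemerédi's lemma is deduced.

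First, apply the standard diregularity lemma for digraphs with parameter $\eps' \ll \eps$ and lower bound $\ell_0' \ge \max\{\ell_0, 1/\eps'\}$. This gives a partition $\{U_0,U_1,\dots,U_k\}$ with $|U_0|\le \eps' n$, equal-sized parts $U_1,\dots,U_k$, and all but at most $\eps' k^2$ ordered pairs $(U_i,U_j)$ being $\eps'$-regular. This lemma itself is proved by Alon and Shapira by the usual energy-increment argument, applied to the bipartite graph encoding edges from $A$ to $B$. Call a vertex $v\in U_i$ \emph{bad} if $U_i$ lies in more than $\sqrt{\eps'}k$ irregular pairs. There are at most $2\sqrt{\eps'}k$ such clusters. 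Move all vertices of bad clusters into the exceptional set.

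Next, build $G'$ from $G$ by deleting the following edges:
\begin{enumerate}[label=(\roman*)]
\item all edges incident to $V_0$;
\item all edges inside clusters;
\item all edges of irregular pairs;
\item all edges of pairs $(U_i,U_j)$ with density less than $d+\eps'$.
\end{enumerate}
For a vertex $v$ in a good cluster, the losses are bounded as follows. Type (ii) costs at most $n/k\le \eps' n$. Type (iii) costs at most $\sqrt{\eps'}k\cdot n/k$. Type (iv) costs at most $(d+\eps')n$ in total, but only on average; individually a vertex may lose more.

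The main obstacle is this per-vertex control. The standard fix is to delete more aggressively. Within an $\eps'$-regular pair of low density, all but $\eps'|U_i|$ vertices have out-degree into $U_j$ at most $(d+2\eps')|U_j|$. Vertices with too many atypical pairs, counted by a Markov/averaging argument, are moved to $V_0$. One then rebalances cluster sizes by removing a few vertices from each cluster into $V_0$, and checks via Proposition~\ref{prop:regsubset} that the pairs stay $2\eps'$-regular with density at least $d$ or exactly $0$.

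Finally, vertices in $V_0$ only need condition (c). They lose at most $|V_0|+dn$ edges, provided we keep their edges to clusters except those in low-density pairs, which again requires averaging. Alternatively we simply allow $V_0$ vertices to keep all edges, since (d) and (e) say nothing about $V_0$.

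Choosing $\eps'$ small enough that all accumulated losses are below $\eps n$ and $|V_0|\le \eps n$ completes the argument, with $L$ taken from the standard lemma.
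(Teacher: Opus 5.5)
The paper gives no proof of this lemma. It cites Alon and Shapira, whose lemma is the usual (non-degree) diregularity lemma, and the degree form is routinely derived from it. Your proposal reconstructs that derivation: the digraph analogue of the Koml\'os--Simonovits cleanup. So you are not taking a different route from the paper so much as supplying what the citation hides. This makes the statement self-contained modulo the standard diregularity lemma.

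The outline is sound, but deletion (i) contradicts the conclusion and must be removed. If you delete every edge at $V_0$, then each $v\in V_0$ has $d^{\pm}_{G'}(v)=0$, which violates (c).

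Your second fallback is the right fix. Keep \emph{every} edge incident to the final exceptional set, including edges from vertices pushed into $V_0$ to their former cluster-mates. Delete only edges inside final clusters and edges in discarded ordered pairs of final clusters. Conditions (d) and (e) impose nothing on $V_0$, and moving vertices out of clusters only decreases the losses of those that remain.

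Your first fallback should be dropped. A vertex moved to $V_0$ precisely because it was atypical has no meaningful ``low-density pairs''. Its loss also cannot be bounded by averaging.

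Smaller points:
\begin{itemize}
\item Run the typicality step in both directions: out-degrees via the pairs $(U_i,U_j)$ and in-degrees via $(U_j,U_i)$. This removes at most $2\sqrt{\eps'}|U_i|$ vertices per cluster.
\item Proposition~\ref{prop:regsubset} is stated only for $\xi\le 1/2$, so apply it with $\xi=1/2$. This gives $2\eps'$-regularity and a density drop below $\eps'$, which is exactly why your deletion threshold $d+\eps'$ is correct.
\item Take $\ell_0'\ge 2\ell_0$ rather than $\ell_0'\ge\ell_0$. Otherwise discarding up to $2\sqrt{\eps'}k$ bad clusters could leave at most $\ell_0$ clusters.
\end{itemize}

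With these adjustments, a surviving vertex loses at most $(d+3\eps'+2\sqrt{\eps'})n<(d+\eps)n$ out-edges, and likewise in-edges. Also $|V_0|=O(\sqrt{\eps'})n\le\eps n$, as you intend.
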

We call $V_1,\dots, V_\ell$  \emph{clusters} and $V_0$ the \textit{exceptional set},  and refer to $G'$ as the \emph{pure digraph}. The \emph{reduced digraph $R$ of $G$} with parameters $\eps$, $d$ and $\ell_0$ is the digraph
defined by
\begin{align*}
  V(R)&:=\{V_1,\dots, V_t\} \qquad \text{and} \qquad E(R):=\{V_iV_j \colon d_{G'}(V_i,V_j) \ge d   \}. 
\end{align*}
The \emph{special reduced digraph $R^*$ of $G$} with parameters $\eps$, $d$ and $\ell_0$ is the 
digraph obtained from  the reduced digraph $R$ of $G$ by adding a loop at every $V_i \in V(R)$ 
such that $G[V_i]$ is a complete digraph.
The notion of the special reduced digraph will be crucial for our proof of Theorem~\ref{orethm2}.

The following well-known result  states that the reduced digraph of $G$ essentially `inherits' any lower bound on the minimum degree of $G$. 

\begin{proposition}\label{prop:reduceddegree}
Let~$0<\eps\leq d/2$ and
    let $G$ be an $n$-vertex digraph such that $\delta (G)\geq \alpha n$ for some $\alpha >0$.
    Suppose we have applied Lemma~\ref{lem:reg} to $G$ to obtain the reduced digraph $R$ of $G$ with parameters $\eps$, $d$ and $\ell_0$. Then 
    $\delta(R)\geq (\alpha-4d)|R|\,.$\qed
\end{proposition}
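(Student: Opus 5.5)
This is a double-counting argument over the pure digraph $G'$ and the reduced digraph $R$; no further use of regularity is needed. Set $k:=|V_1|$ and $m:=|R|=\ell$. Write $\ell k \le n$ and $\ell k \ge (1-\eps)n$, using part~\ref{it:trash} of Lemma~\ref{lem:reg}.

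Fix a cluster $V_i$. We lower-bound and upper-bound the number of edges of $G'$ incident to $V_i$ and having their other endpoint in $V(G)\setminus V_0$.

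For the lower bound, take $v\in V_i$. Property (c) gives
\[
d^+_{G'}(v)+d^-_{G'}(v) > d_G(v)-2(d+\eps)n \ge \alpha n-2(d+\eps)n .
\]
At most $2|V_0|\le 2\eps n$ of these edges go to $V_0$, and by (d) none stay inside $V_i$. Summing over $v\in V_i$, the number of edges of $G'$ between $V_i$ and $\bigcup_{j\neq i}V_j$, counted in both directions, is at least
\[
k\bigl(\alpha-2d-4\eps\bigr)n .
\]

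For the upper bound, by (e) each ordered pair $(V_i,V_j)$ or $(V_j,V_i)$ that is not an edge of $R$ has density $0$ in $G'$. Each pair that is an edge of $R$ contributes at most $k^2$ edges. So the same count is at most $d_R(V_i)\,k^2$, where $d_R$ is the total degree in $R$.

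Comparing the two bounds gives
\[
d_R(V_i) \ge \frac{(\alpha-2d-4\eps)n}{k} \ge (\alpha-2d-4\eps)\,m ,
\]
using $n/k\ge m$. Since $\eps\le d/2$, we have $4\eps\le 2d$, so $d_R(V_i)\ge(\alpha-4d)m$ for every $i$. This is exactly $\delta(R)\ge(\alpha-4d)|R|$.

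The only place needing care is the bookkeeping of the error terms, namely the contributions from $V_0$ and from the degree loss in (c), so that they sum to at most $4d$. There is no genuine obstacle; if $\alpha-4d\le 0$ the statement is trivial anyway.
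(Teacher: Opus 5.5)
Your proof is correct. The paper states this fact without proof as well known, and your argument is the standard one it has in mind: double-count the $G'$-edges at a cluster, and use the density dichotomy in (e) to get $e_{G'}(V_i,V_j)=0$ whenever $V_iV_j\notin E(R)$.

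The only implicit step is replacing $n/k$ by $|R|$, which needs $\alpha-2d-4\eps\ge 0$. Your trivial-case remark covers this, since $\alpha-2d-4\eps\ge\alpha-4d$.
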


Given a digraph $R$ and $t\in\mathbb{N}$, we let $R(t)$ denote the~\emph{$t$-blow-up of~$R$}. 
More precisely, $V(R(t)):= \{v^{j} \colon v \in V(R) \text{ and } j\in [t]\}$ and $E(R(t)) := \{v^a w^{b} :  v w \in E(R) \text{ and }a,b\in [t]\}.$

The following result is a special case
of the counting lemma (often called the key lemma) from~\cite{simo}.
\begin{lemma}\cite{simo}\label{simo}
Suppose that $0<\eps < d$, that $m,t \in \mathbb N$ and that $R=v_1\dots v_k$ is a directed path. Construct a digraph $G$ by replacing every vertex $v_i \in V(R)$ by a set $V_i$ of $m$ vertices, and replacing
each edge $v_i v_{i+1}$ of $R$ with an $\eps$-regular pair $(V_i,V_{i+1})$ of density at least $d$. For each $v_i \in V(R)$ let $U_i$ denote the set of $t$ vertices in $R(t)$ corresponding to $v_i$. 
Let $H$ be a subdigraph of $R(t)$ on $h$ vertices and maximum degree $\Delta \in \mathbb N$. Set $\delta :=d-\eps$ and $\eps _0 := \delta ^\Delta /(2+\Delta)$. If $\eps \leq \eps _0$ and $t-1 \leq \eps _0 m$ then
there are at least $(\eps _0 m)^h$ copies of $H$ in $G$
so that if $x \in V(H)$ lies in $U_i$, then $x$ is embedded into $V_i$ in $G$.
\end{lemma}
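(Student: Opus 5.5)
This is a counting (key) lemma, and I would prove it by the standard greedy embedding argument of Koml\'os--Simonovits, applied to the blown-up path structure. Fix an ordering $x_1,\dots,x_h$ of $V(H)$. For each $x\in V(H)$ lying in $U_i$, the candidate set of $x$ is initially $C(x):=V_i$. We then embed $x_1,x_2,\dots$ one at a time, maintaining the following invariant. Suppose $x_j$ has been embedded and $y$ is a not-yet-embedded vertex. If $x_jy\in E(H)$, then $C(y)$ is replaced by $C(y)\cap N^+_G(\phi(x_j))$. If $yx_j\in E(H)$, it is replaced by $C(y)\cap N^-_G(\phi(x_j))$. Since $H\subseteq R(t)$, any edge between $x\in U_i$ and $y\in U_{i'}$ has $|i-i'|=1$, and it is oriented consistently with an edge of $R$. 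So the relevant pair $(V_i,V_{i'})$ is $\eps$-regular with density at least $d$, in the correct direction.

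The key step is to choose $\phi(x_j)$ to be a \emph{typical} vertex of its current candidate set $C(x_j)$. We require that, for every unembedded neighbour $y$ of $x_j$, the vertex $\phi(x_j)$ has at least $(d-\eps)|C(y)|=\delta|C(y)|$ out-neighbours (or in-neighbours, as appropriate) in $C(y)$. Each $y$ has had its candidate set shrunk at most $\Delta$ times, so by induction $|C(y)|\ge \delta^\Delta m>\eps m$ throughout. Suppose more than $\eps m$ vertices of $V_i$ had fewer than $(d-\eps)|C(y)|$ out-neighbours in $C(y)$. Those vertices together with $C(y)$ would form a pair of density below $d-\eps$, contradicting $\eps$-regularity. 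Hence, for each unembedded neighbour $y$, at most $\eps m$ vertices of $C(x_j)$ are bad. Since $x_j$ has at most $\Delta$ neighbours, at most $\Delta\eps m$ vertices of $C(x_j)$ are bad in total. We must also avoid the at most $h-1$ vertices already used. Only vertices mapped into the same $V_i$ matter here, and there are at most $t-1\le \eps_0 m$ of those, since they come from $U_i$. Therefore the number of valid choices for $\phi(x_j)$ is at least
\[
\delta^\Delta m-\Delta\eps m-\eps_0 m\ \ge\ \delta^\Delta m-(\Delta+1)\eps_0 m = \eps_0 m,
\]
using $\eps\le\eps_0=\delta^\Delta/(2+\Delta)$.

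Multiplying over the $h$ steps gives at least $(\eps_0 m)^h$ distinct embeddings, each placing $x\in U_i$ into $V_i$. Distinct choice sequences give distinct labelled copies, which is the sense of ``copies'' intended.

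The main obstacle is bookkeeping rather than ideas. We must check that every edge of $H$ corresponds to a regular pair of the right orientation, and that the disjointness constraint only costs vertices from the same cluster, which is where the hypothesis $t-1\le\eps_0 m$ enters. We must also check that the lower bound $|C(y)|\ge\delta^\Delta m$ exceeds $\eps m$, so that regularity can be applied at every step. This holds because $\delta^\Delta\ge(2+\Delta)\eps_0\ge(2+\Delta)\eps>\eps$.
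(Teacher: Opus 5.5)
Your proposal is correct, and it is the standard greedy candidate-set proof of the Koml\'os--Simonovits key lemma. The paper gives no proof of its own; it simply cites \cite{simo}, and that is the argument behind the citation, with your directed bookkeeping correctly handling both the out-neighbour case (edges $U_i\to U_{i+1}$) and the in-neighbour case (edges $U_{i-1}\to U_i$), as well as the count $\delta^\Delta m-\Delta\eps m-(t-1)\ge \eps_0 m$ and the check $\delta^\Delta>\eps$.
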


The following  result will be applied in the proof of Theorem~\ref{orethm2} to convert a large $T_r$-tiling in a special reduced digraph into a large $T _r$-tiling in the original digraph $G$. It is (for example) a special case of Corollary 2.3 in~\cite{alony}.
\begin{lemma}\label{newlemma}
Let $\eps, d>0$ and $m,r \in \mathbb N$ such that $0<1/m \ll \eps \ll d \ll 1/r$. Let $H$ be a digraph obtained from $T_r$ by replacing every vertex of $T_r$ with $m$ vertices and replacing each edge of $T_r$ with an $\eps ^2$-regular pair of density at least $d$. Then $H$
contains a $T _r$-tiling covering all but at most $\eps m r$ vertices.
\end{lemma}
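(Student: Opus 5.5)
Since each vertex class of $H$ is independent (edges of $H$ only come from the regular pairs), every copy of $T_r$ in $H$ uses exactly one vertex from each of the $r$ classes. The plan is therefore a greedy argument. Repeatedly find a copy of $T_r$ among the currently uncovered vertices, remove it, and stop only when the uncovered part of each class has fewer than $\eps m$ vertices. Because every removed copy takes one vertex from each class, the uncovered sets $W_1,\dots,W_r$ always have a common size $s$. So when the process halts, at most $r s<\eps m r$ vertices are uncovered, which is exactly the required conclusion.

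Label the classes $V_1,\dots,V_r$ according to a transitive ordering $v_1,\dots,v_r$ of $T_r$, so that $v_iv_j\in E(T_r)$ for $i<j$. Let $W_i\subseteq V_i$ be the uncovered sets, with $|W_i|=s\ge \eps m$. By Proposition~\ref{prop:regsubset} applied with $\xi=\eps$, each pair $(W_i,W_j)$ with $i<j$ is $\eps^2/\eps=\eps$-regular with density at least $d-\eps^2\ge d/2$. It therefore suffices to prove the following claim: if all pairs $(W_i,W_j)$, $i<j$, are $\eps$-regular of density at least $d/2$ and $\eps\ll d\ll 1/r$, then $H[W_1\cup\dots\cup W_r]$ contains a copy of $T_r$ with one vertex in each $W_i$.

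I prove the claim by the standard sequential embedding argument. Start with candidate sets $C_j:=W_j$ for all $j$, and choose $x_1,x_2,\dots$ one at a time. When choosing $x_i\in C_i$, first discard from $C_i$ every vertex $x$ for which some $j>i$ has $|N^+(x)\cap C_j|<(d/2-\eps)|C_j|$. Suppose every $C_j$ has size greater than $\eps|W_j|$. Then regularity of $(W_i,W_j)$ shows that, for each fixed $j>i$, fewer than $\eps|W_i|$ vertices of $C_i$ are discarded; hence at most $r\eps|W_i|$ are discarded in total. Pick any remaining $x_i$, and update $C_j:=C_j\cap N^+(x_i)$ for all $j>i$. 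Each $C_j$ is shrunk at most $r$ times, each time by a factor of at least $d/2-\eps\ge d/4$, so throughout we have $|C_j|\ge (d/4)^r s$. Since $\eps\ll d\ll 1/r$, this gives $(d/4)^r s-r\eps s>\eps s>0$. So the size hypothesis $|C_j|>\eps|W_j|$ needed for regularity always holds, and there is always a valid choice of $x_i$. The chosen vertices $x_1,\dots,x_r$ satisfy $x_ix_j\in E(H)$ for all $i<j$, so they span a copy of $T_r$.

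The only genuinely delicate point is this constant bookkeeping in the embedding step. It is why the lemma assumes $\eps^2$-regularity rather than $\eps$-regularity: after passing to subsets of relative size $\eps$ we still have $\eps$-regular pairs, and the hierarchy $\eps\ll d\ll 1/r$ makes $(d/4)^r\gg r\eps$. Lemma~\ref{simo} cannot be quoted directly here, since $T_r$ is not a subgraph of a blow-up of a directed path; this is why the short direct argument is used instead.
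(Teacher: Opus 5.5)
Your proof is correct. It takes a different route from the paper, which gives no argument and simply cites the lemma as a special case of Corollary~2.3 in~\cite{alony}.

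You instead prove it directly:
\begin{itemize}
\item You greedily remove transversal copies of $T_r$.
\item Proposition~\ref{prop:regsubset} with $\xi=\eps$ shows that once the common leftover size is $s\ge \eps m$, the pairs $(W_i,W_j)$ are still $\eps$-regular with density at least $d-\eps^2\ge d/2$.
\item The next copy comes from the standard sequential embedding along the transitive order. On account of each later class you discard at most $\eps s$ candidates for $x_i$, and every candidate set keeps size at least $(d/4)^r s>(r+1)\eps s$.
\end{itemize}

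The citation buys brevity. Your route is self-contained, uses nothing beyond the definition of regularity and Proposition~\ref{prop:regsubset}, and works directly with out-neighbourhoods in the digraph. It also shows transparently why assuming $\eps^2$-regularity yields exactly the $\eps m r$ leftover bound.

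You are right that Lemma~\ref{simo}, as stated for blow-ups of directed paths, cannot be quoted here, since $T_r$ has no homomorphism into a directed path for $r\ge 3$.

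Three cosmetic points:
\begin{itemize}
\item Regularity gives \emph{at most}, not fewer than, $\eps|W_i|$ discarded vertices per $j$.
\item The hierarchy should be read, as usual, with $\eps$ small in terms of both $d$ and $r$, e.g.\ $\eps<(d/4)^r/(r+1)$.
\item Your opening observation that every copy of $T_r$ in $H$ is transversal is true but not needed, since the greedy procedure only ever removes transversal copies.
\end{itemize}
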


\subsection{The absorbing lemma}
Let $H$ be a digraph. Given a digraph $G$, a set $S \subseteq V(G)$ is called an \emph{$H$-absorbing set for $Q \subseteq V(G)$}, if both
$G[S]$ and $G[S\cup Q]$ contain  $H$-factors. 

The proof of Theorem~\ref{mainthm} makes use of the following \textit{absorbing lemma} of Lo and Markstr\"om~\cite{lomark}.
\begin{lemma}\cite{lomark}\label{lolemma}
Let $h,t \in \mathbb N$ and let $\gamma >0$. Suppose that $H$ is an $h$-vertex digraph. Then there exists an $n_0 \in \mathbb N$ such that the following holds. Suppose that $G$ is a digraph
on $n \geq n_0$ vertices so that, for any $x,y \in V(G)$, there are at least $\gamma n^{th-1}$ $(th-1)$-sets $X \subseteq V(G)$ such that both $G[X \cup \{x\}]$ and $G[X \cup \{y\}]$ contain  $H$-factors.
Then $V(G)$ contains a set $M$ so that
\begin{itemize}
\item $|M|\leq (\gamma/2)^h n/4$;
\item $M$ is an $H$-absorbing set for any $W \subseteq V(G) \setminus M$ such that $|W| \in h \mathbb N$ and  $|W|\leq (\gamma /2)^{2h} hn/32 $.
\end{itemize}
\end{lemma}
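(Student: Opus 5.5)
The statement is the absorbing lemma of Lo and Markstr\"om~\cite{lomark}. We may simply cite it, but here is how I would prove it; the constants are only matched to the stated bounds up to routine adjustment. The plan is the standard two-stage absorbing argument. First, every $h$-set $Q$ has many small absorbing sets of a fixed size. Second, a random selection of such sets, after cleaning, gives a single set $M$ that absorbs any small $W$.

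\emph{Step 1 (local absorbers).} Fix an $h$-set $Q=\{q_1,\dots,q_h\}$.

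First we find many copies of $H$. Apply the hypothesis with $x=y$: for any vertex $x$ there are at least $\gamma n^{th-1}$ sets $X$ with $G[X\cup\{x\}]$ containing an $H$-factor. Hence $G$ contains $\Omega_{\gamma,h,t}(n^h)$ copies of $H$, and so $\Omega(n^h)$ copies avoiding $Q$.

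Next we build absorbers. Take such a copy $H'$ with vertices $v_1,\dots,v_h$. For each $i$ in turn, choose a $(th-1)$-set $X_i$ for the pair $(v_i,q_i)$, disjoint from $Q$, from $V(H')$ and from the previously chosen $X_j$. Only $O(n^{th-2})$ of the $\gamma n^{th-1}$ candidate sets meet the $O(1)$ forbidden vertices, so many choices remain. Set
\[
S=V(H')\cup X_1\cup\dots\cup X_h, \qquad |S|=h+h(th-1)=th^2 .
\]
Then $G[S]$ has an $H$-factor, given by the $H$-factors of the $G[X_i\cup\{v_i\}]$. Also $G[S\cup Q]$ has an $H$-factor, given by $H'$ together with the $H$-factors of the $G[X_i\cup\{q_i\}]$. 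Each $S$ arises from at most $O(1)$ choice sequences. Hence the number of $th^2$-sets absorbing $Q$ is at least $c\,n^{th^2}$, where $c$ is roughly $\gamma^{h+1}$ up to factors depending on $h,t$.

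\emph{Step 2 (random selection).} Choose each $th^2$-subset of $V(G)$ independently with probability $p=c'n^{1-th^2}$, where $c'$ is a small multiple of $c$. Let $\mathcal F$ be the resulting family. By Chernoff bounds and Markov's inequality, the following hold simultaneously with positive probability:
\begin{itemize}
\item $|\mathcal F|\le 2c'n$;
\item for every $h$-set $Q$, the family $\mathcal F$ contains at least $cc'n/2$ absorbers of $Q$;
\item the number of intersecting pairs in $\mathcal F$ is at most $O(c'^2 n)$, which is below $cc'n/4$ once $c'\ll c$.
\end{itemize}
For the second item we use the union bound over the $n^h$ choices of $Q$, which is absorbed by the Chernoff estimate. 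Now delete one set from each intersecting pair and also every set not spanning an $H$-factor. This leaves a disjoint family $\mathcal F'$ in which every $h$-set still has at least $cc'n/4$ absorbers. Let $M=\bigcup\mathcal F'$. Then $|M|\le 2c'th^2n$, which is at most $(\gamma/2)^hn/4$ for a suitable choice of $c'$.

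\emph{Step 3 (absorbing $W$).} Let $W\subseteq V(G)\setminus M$ with $|W|\in h\mathbb N$ and $|W|$ at most a small multiple of $cc'n$; this matches the stated bound $(\gamma/2)^{2h}hn/32$ after adjusting constants. Partition $W$ into $h$-sets $Q_1,\dots,Q_s$ with $s\le cc'n/4$. Greedily assign to each $Q_j$ a distinct absorber in $\mathcal F'$; this is possible because each $Q_j$ has at least $cc'n/4>s$ absorbers available. The assigned absorbers together with their $Q_j$ give $H$-factors. Every unused member of $\mathcal F'$ spans an $H$-factor on its own. Together these give an $H$-factor of $G[M\cup W]$. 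Taking $W=\emptyset$ shows that $G[M]$ also has an $H$-factor, so $M$ is an $H$-absorbing set for every such $W$.

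The main obstacle is the counting in Step 1. The key point is that the pairwise connectivity hypothesis, used both with $x=y$ to produce copies of $H$ and with $x\ne y$ to swap $v_i$ for $q_i$, yields $\Omega(n^{th^2})$ absorbers. The remaining work is bookkeeping of constants so that they come out as $(\gamma/2)^h$ and $(\gamma/2)^{2h}$.
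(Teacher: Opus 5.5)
\textbf{Verdict: the argument is qualitatively sound but, as set up, cannot reach the stated constants.}

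The paper does not prove this lemma; it quotes it from Lo and Markstr\"om~\cite{lomark}. Your outline (local absorbers for each $h$-set, random selection, deletion of overlapping pairs, greedy absorption) is the standard argument behind it. It does yield some $M$ with $|M|\le(\gamma/2)^hn/4$ that absorbs every $W$ with $|W|\in h\mathbb N$ and $|W|\le\kappa n$, for some $\kappa=\kappa(\gamma,h,t)>0$.

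What fails is your claim that $\kappa=(\gamma/2)^{2h}h/32$ follows ``after adjusting constants''. Each of your absorbers contains a separate copy $H'$ of $H$, and the hypothesis only guarantees about $\gamma n^h$ such copies, so your $c$ is of order $\gamma^{h+1}$. In your bookkeeping, the requirement $|M|\le 2c'th^2n\le(\gamma/2)^hn/4$ forces $c'=O(\gamma^h)$. So your argument certifies only about $cc'n=O(\gamma^{2h+1}n)$ absorbers per $Q$, or $O(\gamma^{2h+2}n)$ with your choice $c'\ll c$. Absorbing every $W$ of size $(\gamma/2)^{2h}hn/32$ needs $(\gamma/2)^{2h}n/32$ of them.

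This is not slack in your counting. Take $t=1$ and let $H$ be the complete digraph on $h$ vertices. Let $G$ be a complete digraph on a set $K$ of $\eps n$ vertices, with every other vertex joined in both directions to all of $K$ and to nothing else. The hypothesis holds with $\gamma=\Theta_h(\eps^{h-1})$. However, an $h^2$-set absorbing an $h$-set $Q\subseteq V(G)\setminus K$ has at most one vertex outside $K$. So only an $O_h(\eps^{h^2-1})=O_h(\gamma^{h+1})$ proportion of $h^2$-sets absorb $Q$, and $c=O_h(\gamma^{h+1})$ is forced.

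\textbf{The fix: anchor the absorber at $q_1$ instead of at a fresh copy of $H$.}
\begin{enumerate}
\item Choose a $(th-1)$-set $X_0$, disjoint from $Q$, such that $G[X_0\cup\{q_1\}]$ has an $H$-factor $\mathcal T_0$.
\item Let $\{q_1,u_2,\dots,u_h\}$ be the copy in $\mathcal T_0$ containing $q_1$.
\item For $2\le i\le h$, choose disjoint $(th-1)$-sets $X_i$ for the pairs $(u_i,q_i)$, avoiding $Q\cup X_0$.
\end{enumerate}
The set $A=X_0\cup X_2\cup\dots\cup X_h$ has $h(th-1)$ vertices. $G[A\cup Q]$ is covered by $\mathcal T_0$ together with the factors of the $G[X_i\cup\{q_i\}]$. $G[A]$ is covered by $\mathcal T_0$ minus the copy through $q_1$, together with the factors of the $G[X_i\cup\{u_i\}]$.

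There are at least $(1-o(1))\gamma^hn^{h(th-1)}$ ordered choices. Hence a proportion at least $(1-o(1))\gamma^h$ of all $h(th-1)$-sets absorb $Q$, with no extra factor of $\gamma$. Running your Steps~2 and~3 with sets of this size then recovers the stated constants, even with your crude factors $2$ and $1/2$.

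For this paper's purposes, the weaker version you actually prove would suffice. All later constants, in Lemmas~\ref{absorbmainthm} and~\ref{lem:clabs} and Theorem~\ref{orethm2}, are chosen via $\ll$. It is still not the lemma as stated.
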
 

\subsection{Probabilistic estimates}
We will need the following form of the well-known Chernoff-Hoeffding inequality.
\begin{lemma}\label{chernoff}
Let $X$ be a nonnegative random variable with binomial or hypergeometric distribution and expected value $\mu$. 
Then, for $t \ge 0$
\begin{equation}\label{eq:chernoff}
\mathbb{P}[X \le \mu - t] \le e^{-t^2/2\mu}.
\end{equation}
\end{lemma}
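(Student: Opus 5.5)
This is the standard Chernoff--Hoeffding lower-tail bound, so rather than prove it from scratch I would derive it from the exponential-moment (Bernstein) method, with a reduction for the hypergeometric case. First consider $X=\sum_{i=1}^N X_i$ with the $X_i$ independent Bernoulli random variables with parameters $p_i$, so that $\mu=\sum p_i$. For $\lambda>0$, Markov's inequality applied to $e^{-\lambda X}$ gives
\[
\mathbb P[X\le \mu-t]\le e^{\lambda(\mu-t)}\,\mathbb E[e^{-\lambda X}]
= e^{\lambda(\mu-t)}\prod_{i}\bigl(1-p_i(1-e^{-\lambda})\bigr)
\le \exp\bigl(\lambda(\mu-t)-\mu(1-e^{-\lambda})\bigr),
\]
using $1+x\le e^x$. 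Write $t=\delta\mu$ with $0\le\delta\le 1$; the case $t>\mu$ is trivial since then $\mathbb P[X\le\mu-t]=0$ because $X\ge 0$. Choose $\lambda=-\ln(1-\delta)$. This yields the bound $\bigl(e^{-\delta}(1-\delta)^{-(1-\delta)}\bigr)^{\mu}$.

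The next step is the elementary inequality $-\delta-(1-\delta)\ln(1-\delta)\le-\delta^2/2$ for $\delta\in[0,1)$. To prove it, expand $(1-\delta)\ln(1-\delta)=-\delta+\sum_{k\ge2}\frac{\delta^k}{k(k-1)}\ge -\delta+\delta^2/2$. The endpoint $\delta=1$ follows by continuity, or directly from $\mathbb P[X=0]\le e^{-\mu}\le e^{-\mu/2}$. Substituting the inequality gives $e^{-\delta^2\mu/2}=e^{-t^2/2\mu}$, which is the binomial case.

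For the hypergeometric case I would invoke Hoeffding's classical observation. If $X$ counts the marked elements in a uniformly random $N$-subset drawn without replacement from a population, then $\mathbb E[f(X)]\le\mathbb E[f(Y)]$ for every convex continuous $f$, where $Y$ is the corresponding binomial random variable with the same mean (sampling with replacement). Applying this with the convex function $f(x)=e^{-\lambda x}$ shows that the moment generating function bound above holds for $X$ as well. The rest of the argument is then identical.

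The only non-routine ingredient is this convex-order comparison. I would either cite Hoeffding's 1963 paper for it, or cite the standard reference (Janson--\L uczak--Ruci\'nski, Theorems 2.1 and 2.10), which states exactly \eqref{eq:chernoff} for both distributions. Since the lemma is a well-known tool, the paper presumably just cites it, and I would do the same.
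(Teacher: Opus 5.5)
Your proposal is correct and matches what the paper does: the paper gives no proof of this lemma, quoting it simply as the well-known Chernoff--Hoeffding inequality, so citing a standard source such as the Janson--\L uczak--Ruci\'nski theorems you name is exactly the paper's approach. The derivation you sketch behind that citation also checks out. The choice $\lambda=-\ln(1-\delta)$ gives $\bigl(e^{-\delta}(1-\delta)^{-(1-\delta)}\bigr)^{\mu}$, and the series expansion gives $(1-\delta)\ln(1-\delta)\ge -\delta+\delta^2/2$. The cases $t\ge\mu$ are correctly handled via $X\ge 0$. Hoeffding's convex-order comparison with $f(x)=e^{-\lambda x}$ legitimately transfers the moment-generating-function bound to the hypergeometric case, which is how Theorem 2.10 in that reference is proved.
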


\section{Proof of Theorem~\ref{mainthm}}\label{sec:main}
Let $P(t_1,\dots, t_k)$ denote the blow-up of the directed path $v_1\dots v_k$ where 
$v_i$ is replaced by a set of $t_i$ vertices for each $i \in [k]$. If $H$ is a digraph with a homomorphism into a directed path, then $H$ is a spanning subdigraph of $P(t_1,\dots, t_k)$
for some choice of $k, t_1,\dots, t_k \in \mathbb N$.
Thus, to prove Theorem~\ref{mainthm} it suffices to prove it in the case when $H=P(t_1,\dots, t_k)$.
We first prove an approximate version of Theorem~\ref{mainthm} in this case.

\begin{theorem}\label{approxmainthm}
Let $k \geq 2$ and $t_1,\dots, t_k \in \mathbb N$. Set $H:=P(t_1,\dots, t_k)$.
Given any $\eta>0$, there exists $n_0=n_0(\eta,H)\in \mathbb N$ such that the following holds for all $n \geq n_0$.
If $G$ is an $n$-vertex digraph with 
$$\delta(G)\geq (1+\eta)n,$$
then $G$ contains an $H$-tiling covering all but at most $\eta n$ vertices.
\end{theorem}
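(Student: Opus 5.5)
We will prove Theorem~\ref{approxmainthm} by the standard regularity approach: apply Lemma~\ref{lem:reg}, find an almost-perfect fractional-type tiling in the reduced digraph $R$ by small "tiles" that admit almost-perfect $H$-tilings in their blow-ups, and then convert this back to $G$ using Lemma~\ref{simo}. Choose constants $0<1/n_0\ll 1/\ell_0\ll\eps\ll d\ll\eta,1/|H|$. Applying Lemma~\ref{lem:reg} with parameters $\eps,d,\ell_0$, we obtain the reduced digraph $R$ on $\ell$ vertices. By Proposition~\ref{prop:reduceddegree}, $\delta(R)\ge(1+\eta-4d)\ell\ge (1+\eta/2)\ell$.

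The key structural step is to show that $R$ contains a tiling by directed edges, directed paths on three vertices, and other small "good" tiles covering all but at most $\eta\ell/4$ vertices. Here the natural tile is a single edge $uv$ of $R$: the blow-up of an edge $V_uV_v$, two clusters of size $m$ joined by an $\eps$-regular pair of density at least $d$, can be almost perfectly tiled with $H=P(t_1,\dots,t_k)$. This is because $H$ is a subgraph of the blow-up of the directed path $v_1\dots v_k$, and mapping odd-indexed parts to $V_u$ and even-indexed parts to $V_v$ would require backward edges. So instead we use $H\subseteq$ blow-up of a directed path $w_1\dots w_k$ and, for tiles, use a directed path $Q$ in $R$ of length $k$ or a suitable structure. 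The cleaner choice is as follows. A degree condition $\delta(R)>\ell$ means every vertex has a double edge with many others: summing, the number of pairs with edges in both directions is at least $\eta\ell^2/4$. More precisely, the underlying graph $R_2$ of double edges satisfies $\delta(R_2)\ge \delta(R)-(\ell-1)\ge \eta\ell/2$. On a double edge $\{u,v\}$, we can map the path $v_1\dots v_k$ alternately onto $u,v,u,v,\dots$, so the blow-up of a double edge contains an $H$-tiling, and by balancing two orientations (starting at $u$ versus at $v$) covers both clusters almost equally. Thus it suffices to find an almost-perfect matching in $R_2$. This is the main obstacle: $\delta(R_2)\ge\eta\ell/2$ alone does not give a near-perfect matching. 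So I would instead use the full condition $\delta(R)\ge(1+\eta/2)\ell$ to find a near-perfect $\{$double edge, "bowtie"$\}$-tiling. Take a maximum matching $M$ in $R_2$. If more than $\eta\ell/8$ vertices are unmatched, then a standard augmenting argument applies: two unmatched vertices $x,y$ each have total degree $>\ell$ and no double edges among unmatched vertices. Counting shows that some matching edge $ab$ receives total degree at least $5$ from $\{x,y\}$ in a way that yields double edges $xa,yb$ (or $xb,ya$), contradicting maximality. If this counting fails, I would fall back on tiles such as a directed triangle with one double edge, or on fractional tilings with weights verified via LP duality.

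Given the tiling of $R$ into double edges covering all but $\eta\ell/8$ clusters, we handle each double edge $V_uV_v$ as follows. By Proposition~\ref{prop:regsubset}, both $(V_u,V_v)$ and $(V_v,V_u)$ remain regular with density at least $d-\eps$ on any subsets of size at least $\sqrt\eps m$. Greedily embed copies of $H$ using Lemma~\ref{simo}, alternating the two orientations of the homomorphism to keep the leftover parts of $V_u$ and $V_v$ balanced, until fewer than $\sqrt\eps m$ vertices remain in each. The uncovered vertices are then $V_0$, the unused clusters, and the leftovers, totalling at most $\eps n+\eta n/8+2\sqrt\eps n<\eta n$.
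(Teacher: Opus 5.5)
There is a genuine gap at the step you yourself flag as the main obstacle. The condition $\delta(R)\ge(1+\eta/2)\ell$ does \emph{not} force an almost-perfect matching in the double-edge graph $R_2$, and your augmenting argument cannot be completed.

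For a counterexample, split $V(R)=I\cup J$ with $|J|=\eta\ell/2+1$. Let $R[I]$ be a transitive tournament, and join every vertex of $J$ by double edges to all other vertices. Each $x\in I$ has degree $(|I|-1)+2|J|=(1+\eta/2)\ell$, so the degree condition holds. The blow-up of this $R$ is also a legitimate input $G$, for $\eta/2$ in place of $\eta$. Yet $I$ is independent in $R_2$, so every matching in $R_2$ misses at least $|I|-|J|=(1-\eta)\ell-2$ clusters.

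This is exactly where your augmentation breaks. In a maximum matching every edge $ab$ has $a\in J$ and $b\in I$. Two unmatched $x,y\in I$ send $6$ edges to $\{a,b\}$ (double edges to $a$, single edges to $b$), yet no augmentation exists. Degree counting only guarantees that some matching edge receives $5$ edges from $\{x,y\}$. Forcing double edges $xa,yb$ or $xb,ya$ needs $7$ of the $8$ possible edges.

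Your fallbacks do not help either. $R[I]$ contains neither a directed triangle nor a double edge, and an unspecified fractional tiling verified ``via LP duality'' is not an argument. In fact, any tiling built only from double-edge tiles covers at most about $2|J|m\approx\eta n$ vertices here.

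The missing idea is to use single-direction edges along a long directed path. Since $\delta(R)\ge|R|$, a standard consequence of Ghouila-Houri's theorem gives a directed Hamilton path $V_1\dots V_\ell$ in $R$. For each starting index $j$, use Lemma~\ref{simo} to embed a copy of $H=P(t_1,\dots,t_k)$ whose $i$th class lies in $V_{j+i-1}$. One round of these $\ell$ cyclically shifted copies removes exactly $t_1+\dots+t_k=|H|$ vertices from every cluster. So the clusters shrink in lockstep, and Proposition~\ref{prop:regsubset} keeps consecutive pairs regular until only about $\eta m/3$ vertices remain in each. When $R$ has only a Hamilton path and not a cycle, drop the copies that would wrap from $V_\ell$ to $V_1$. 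This costs at most $(2k-2)m\le\eps n$ vertices. This is the paper's argument, and it works precisely in the near-tournament situation above, where double edges are scarce.
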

\begin{proof}
Define  constants $\eps , d >0$ and  $n_0, \ell _0 , L \in \mathbb N$ so that
\begin{align}\label{hier}
    \frac{1}{n_0} \ll \frac{1}{L} \ll \frac{1}{\ell_0} \ll \eps \ll d \ll \eta, \frac{1}{|H|}.
\end{align}
Let $G$ be a digraph on $n \geq n_0$ vertices as in the statement of the theorem. Apply Lemma~\ref{lem:reg} with
parameters $\eps, d, \ell_0$ to obtain the reduced digraph $R$ of $G$ with $\ell_0 < |R| < L$, and the pure digraph $G'$.
Let $m$ denote the size of the clusters of $G$ and set $\ell:=|R|$.

Proposition~\ref{prop:reduceddegree} and (\ref{hier}) imply that $\delta (R) \geq (1+\eta/2)|R|$.
Therefore, Ghouila-Houri's theorem~\cite{gh} implies that $R$ contains a directed Hamilton path.

Initially, suppose $R$ in fact contains a directed Hamilton cycle $V_1\dots V_\ell V_1$; so the clusters of $G$ are $V_1,\dots, V_\ell$. Set $V_{\ell+1}:=V_1$.

 Lemma~\ref{simo} implies that $G[V_1\cup \dots \cup V_k]$ contains a copy of $H$ with precisely
$t_i$ vertices in $V_i$ for each $i \in [k]$.
Similarly, Lemma~\ref{simo} implies that $G[V_2\cup \dots \cup V_{k+1}]$ contains a copy of $H$ with precisely
$t_i$ vertices in $V_{i+1}$ for each $i \in [k]$.
Repeating this process (shifting the clusters we find the copy of $H$ in each time), one can find a collection $\mathcal C_1$ of $\ell$ vertex-disjoint copies of $H$ in $G$
that cover precisely the same number  of vertices (i.e., $|H|$) in each of the clusters $V_i$ (for $i \in [\ell]$).

Remove the vertices of $\mathcal C_1$ from the clusters $V_1,\dots , V_{\ell}$. Then Proposition~\ref{prop:regsubset} implies that $(V_i,V_{i+1})$ is still an $\eps/2$-regular pair
 in $G'$ with density at least $d-\eps$.
We may now again repeatedly apply Lemma~\ref{simo} to find a collection $\mathcal C_2$ of $\ell$ vertex-disjoint copies of $H$ in $G$
that cover precisely the same number of vertices in each of the clusters $V_i$ (for $i \in [\ell]$).

In fact, notice that we can repeat this process until we have covered all but at most
$\eta m/3$ vertices from each of the clusters $V_i$ of $G$ with vertex-disjoint copies of $H$. Indeed, 
given any $i \in [\ell]$ and $V'_i\subseteq V_i$ and $V'_{i+1}\subseteq V_{i+1}$
such that $|V'_i|, |V'_{i+1}| \geq \eta m/3$, Proposition~\ref{prop:regsubset} implies that
 $(V'_i,V'_{i+1})$ is a $(3\eps/\eta)$-regular pair in $G'$ 
with density at least $d-\eps$. Since $3\eps/\eta \leq \sqrt{\eps}$, this allows one to still
apply Lemma~\ref{simo} where needed (where now $\sqrt{\eps}$ plays the role of $\eps$ and $d-\eps$ plays the role of $d$).

In summary, this process ensures we can obtain an $H$-tiling $\mathcal H$ in $G$ that does not cover at most
$\eta m/3$ vertices from each cluster $V_i$.  As $\mathcal H$ also does not cover any vertex from $V_0$,
 $\mathcal H$ is an $H$-tiling in $G$ covering all but at most 
$$|V_0|+ \ell \cdot \frac{\eta m}{3} \leq \eps n +\frac{\eta n}{3} 
\stackrel{(\ref{hier})}{\leq} 
\frac{\eta n}{2}$$
vertices of $G$, as desired.

Now suppose $V_1\dots V_\ell$ is just a directed Hamilton path in $R$ rather than a directed Hamilton cycle. Then we can follow precisely the same procedure as above, except that at each step where we require
a copy $H'$ of $H$ in $G$ that uses both vertices from $V_1$ and $V_{\ell}$, we cannot find such an $H'$.
In the above procedure we only covered at most 
$$ |V_1|+\dots +|V_{k-1}| +|V_{\ell-k+2}|+\dots +|V_{\ell}| = (2k-2)m \leq (2k-2) n/\ell  \stackrel{(\ref{hier})}{\leq}  \eps n$$
vertices using such copies $H'$ of $H$. By ignoring such copies of $H$ in the $H$-tiling $\mathcal H$, this tells us that we can still find an $H$-tiling in $G$ covering all but at most
$
\eta n/{2} +\eps n \leq \eta n
$
vertices of $G$, as desired.
\end{proof}

\begin{remark}
The reader may wonder why we did not  prove  Theorem~\ref{approxmainthm} via a {single} application of the blow-up lemma~\cite{blowup}. Note though, in general one cannot apply the blow-up lemma to \textbf{spanning} structures of the reduced digraph (such as directed Hamilton cycles or paths). More formally, in an application of the blow-up lemma, one  requires that the number of clusters $T$ considered satisfies $\eps \ll 1/T$, but in our case $T=\ell$ and $1/\ell \ll \eps$.
\end{remark}

Next we apply Lemma~\ref{lolemma} to provide an absorbing lemma for Theorem~\ref{mainthm}.
\begin{lemma}\label{absorbmainthm}
Let $k \geq 2$ and $t_1,\dots, t_k \in \mathbb N$. Set $H:=P(t_1,\dots, t_k)$ and $h:=|H|$.
Given any $\eta>0$, there exist $\xi =\xi(\eta,H)>0$ and $n_0=n_0(\eta,H)\in \mathbb N$ such that the following holds for all $n \geq n_0$.
If $G$ is an $n$-vertex digraph with 
$$\delta(G)\geq (1+\eta)n,$$
then $V(G)$ contains a set $M$ so that 
\begin{itemize}
\item $|M|\leq \xi n$;
\item $M$ is an $H$-absorbing set for any $W \subseteq V(G) \setminus M$ such that $|W| \in h\mathbb N$ and  $|W|\leq \xi ^2 hn/2 $.
\end{itemize}
\end{lemma}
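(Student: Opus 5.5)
We verify the hypothesis of Lemma~\ref{lolemma} with $t$ a suitable constant and some $\gamma=\gamma(\eta,H)>0$, and then take $\xi:=(\gamma/2)^h/4$. Lemma~\ref{lolemma} gives $|M|\le (\gamma/2)^h n/4=\xi n$. It also shows that $M$ absorbs every $W$ with $|W|\in h\mathbb N$ and $|W|\le (\gamma/2)^{2h}hn/32$. Since $\xi^2 h n/2=(\gamma/2)^{2h}hn/32$, this is exactly the required bound.

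So fix $x,y\in V(G)$. The task is to find $\gamma n^{th-1}$ sets $X$ of size $th-1$ such that both $G[X\cup\{x\}]$ and $G[X\cup\{y\}]$ contain $H$-factors. The standard route is a chain of ``switchers''. Call $x,y$ \emph{$H$-reachable} (with parameter $\beta$) if there are at least $\beta n^{h-1}$ sets $Y$ of size $h-1$ such that both $Y\cup\{x\}$ and $Y\cup\{y\}$ span copies of $H$. If $x=z_0,z_1,\dots,z_s=y$ with consecutive pairs reachable and $s$ bounded, we can choose disjoint sets $Y_1,\dots,Y_s$ together with the vertices $z_1,\dots,z_{s-1}$. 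This gives $\Omega(n^{s(h-1)+s-1})$ sets $X$ of size $sh-1$. Covering with $x$ uses $x\cup Y_1$, $z_1\cup Y_2$, and so on; covering with $y$ uses $Y_1\cup z_1$, \dots, $Y_s\cup y$. Taking $t=s$ (or padding with disjoint copies of $H$ to get a uniform $t$) gives the hypothesis.

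The key step is to show that any two vertices are reachable within at most two steps. The degree condition gives $d^+(v)+d^-(v)\ge(1+\eta)n$ for every $v$.

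Take a common neighbour $z$ of $x$ and $y$ in a suitably directed sense. Here is the concrete plan. Embed $H=P(t_1,\dots,t_k)$ so that the swapped vertex lies in part $V_1$ of the blow-up, which needs only out-neighbours of the swapped vertex inside part $2$. Alternatively it can lie in part $k$, which needs only in-neighbours. Since $d^+(x)+d^-(x)\ge(1+\eta)n$ and likewise for $y$, pigeonhole over the four combinations shows that one of the following intersections has size at least $\eta n/2$ (roughly):
\[
N^+(x)\cap N^+(y),\qquad N^-(x)\cap N^-(y),\qquad N^+(x)\cap N^-(y),\qquad N^-(x)\cap N^+(y).
\]

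In the first two cases, a supersaturation argument finds many copies of $H-v$, where $v$ is a vertex of part $1$ (respectively part $k$), with all of part $2$ (respectively part $k-1$) inside the common neighbourhood. This needs a density argument: the common neighbourhood $S$ has size at least $\eta n/2$, and $G[S]$ inherits $\delta\ge(1+\eta)n-2(n-|S|)$ only when $S$ is large. So it is better to use the weaker requirement of just one vertex of $H$ having all its neighbours in a single part.

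Since $\delta(G)\ge(1+\eta)n$, the digraph has at least $\eta n^2/2$ double edges at each vertex. More robustly, Theorem~\ref{approxmainthm} together with supersaturation shows that $G$ contains $\Omega(n^{h'})$ copies of any fixed blow-up of a directed path. Applying this inside large sets lets us attach $x$ or $y$.

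In the mixed cases ($x$ out, $y$ in), route through an intermediate vertex $z$ chosen so that $z$ and $x$ share out-neighbours and $z$ and $y$ share in-neighbours. Such a $z$ exists by counting: the number of triples is large because degree sums exceed $n$.

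The main obstacle is exactly this reachability step. One has to ensure that the common neighbourhood sets are robust enough to host $\Omega(n^{h-1})$ copies of $H$ minus a vertex. The plan is to first prove, via Lemma~\ref{lem:reg}, Proposition~\ref{prop:reduceddegree} and Lemma~\ref{simo}, that every vertex set of linear size $\ge \eta n/4$ in $G$ contains an edge-dense regular pair. This uses that $\delta(G)>n$ forces many double edges, so $G[S]$ has density bounded below for large $S$. The second step is to deduce that $\Omega(n^{h-1})$ copies of $P(t_1,\dots,t_k)-v$ have the relevant part inside $S$, using that $P(t_1,\dots,t_k)$ minus a vertex embeds in a blow-up of a single edge pair once $k=2$. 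For larger $k$ the plan is to use regular pairs along a short directed path in $R$ that starts in clusters meeting $S$ substantially.
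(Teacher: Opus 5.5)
\textbf{Verdict: there is a genuine gap.} Your framework matches the paper: verify the hypothesis of Lemma~\ref{lolemma}, then set $\xi=(\gamma/2)^h/4$. Your bookkeeping for $\xi$ is also correct. The gap is in the central counting step.

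\textbf{The density claim is false.} Your plan needs every set $S$ with $|S|\ge \eta n/4$ to contain an edge-dense regular pair, because ``$G[S]$ has density bounded below''. Removing all edges inside a set of size $n/3$ from the complete digraph gives $\delta(G)\ge 4n/3$, yet that set is independent. One can even make \emph{both} $N^+(x)\cap N^+(y)$ and $N^-(x)\cap N^-(y)$ independent sets of size about $2n/5$ when $\eta$ is small: make them disjoint, delete all edges inside each, and split the remaining out- and in-neighbours of $x$ and $y$ disjointly. For the same reason, applying Theorem~\ref{approxmainthm} plus supersaturation ``inside large sets'' fails, since $G[S]$ inherits no useful degree condition. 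So no argument that puts $H-v$, or even one regular pair, inside $S$ can work.

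\textbf{What is missing.} Only the neighbours of the swapped vertex need to lie in $S$. If $v$ is in part $1$ of $P(t_1,\dots,t_k)$, its neighbourhood is exactly part $2$, so only part $2$ must sit in $S=N^+(x)\cap N^+(y)$. The paper proceeds as follows:
\begin{enumerate}
\item It picks a cluster $V_2$ with $|V_2\cap S|\ge \eta m/2$.
\item Since $\delta(R)\ge(1+\eta/2)|R|$, every cluster has in- and out-degree greater than $\eta|R|/2$ in $R$. This gives a directed path $V_1V_2\cdots V_k$ in $R$ with $V_2$ in the \emph{second} position.
\item It sets $V_2':=V_2\cap S$ and takes arbitrary $V_i'\subseteq V_i$ of the same size for the other $i$. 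Proposition~\ref{prop:regsubset} keeps consecutive pairs $\sqrt{\eps}$-regular with density at least $d/2$.
\item Lemma~\ref{simo} then gives $\Omega(n^{h-1})$ copies of $P(t_1-1,t_2,\dots,t_k)$. Each completes to $H$ with $x$ or with $y$ added to part $1$.
\end{enumerate}
Your final sentence gestures at this, but it has the path \emph{start} in a cluster meeting $S$. That puts part $1$, not part $2$, there unless $t_1=1$. For $k=2$ you still ask for the pair to lie inside $S$.

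\textbf{The mixed cases never arise.} Your handling of them is also unjustified. For a given $z$ you only know that one of $N^+(x)\cap N^+(z)$ and $N^-(x)\cap N^-(z)$ is large, not which, so ``such a $z$ exists by counting'' does not follow. The fix is a sharper pigeonhole. Summing the degree conditions gives $d^+(x)+d^+(y)+d^-(x)+d^-(y)\ge 2(1+\eta)n$. Hence $d^+(x)+d^+(y)\ge(1+\eta)n$ or $d^-(x)+d^-(y)\ge(1+\eta)n$, so $|N^+(x)\cap N^+(y)|\ge \eta n$ or $|N^-(x)\cap N^-(y)|\ge\eta n$. Every pair is therefore reachable in one step, Lemma~\ref{lolemma} applies with $t=1$, and no switcher chains are needed.
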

\begin{proof}
Define  constants $\eps , d >0$ and  $n_0, \ell _0 , L \in \mathbb N$ so that
\begin{align}\label{hier3}
    \frac{1}{n_0} \ll \frac{1}{L} \ll \frac{1}{\ell_0} \ll \eps \ll d \ll \eta, \frac{1}{h}.
\end{align}
Let $G$ be a digraph on $n \geq n_0$ vertices as in the statement of the lemma.

Consider any $x,y \in V(G)$. As $\delta (G) \geq (1+\eta) n$, $|N^+_G(x) \cap N^+_G(y)| \geq \eta n$ or $|N^-_G(x) \cap N^-_G(y)| \geq \eta n$. We may assume that the former holds since the latter case follows analogously. 

Apply Lemma~\ref{lem:reg} with
parameters $\eps, d, \ell_0$ to obtain the reduced digraph $R$ of $G$ with $\ell_0 < |R| < L$, and the pure digraph $G'$.
Let $m$ denote the size of the clusters of $G$. 

Note that there exists some cluster $V_2$  such that $V'_2 :=V_2 \cap N^+_G(x) \cap N^+_G(y)$ satisfies 
$|V'_2|\geq \eta m/2$. Moreover, Proposition~\ref{prop:reduceddegree} and (\ref{hier3}) imply that $\delta (R) \geq (1+\eta/2)|R|$. Thus, there is a directed path $V_1V_2\dots V_k$ in $R$ on $k$ vertices whose second vertex is $V_2$.

For each $i \in [k]\setminus \{2\}$, define $V'_i$ to be a  subset of $V_i$ of size precisely $|V'_2|$. Note that $(V_i,V_{i+1})$ is an $\eps$-regular pair in $G'$ with density at least $d$ for each $i \in [k-1]$.
As  $|V'_2|\geq \eta m/2$, Proposition~\ref{prop:regsubset} and (\ref{hier3}) therefore imply that 
$(V'_i,V'_{i+1})$ is a $\sqrt{\eps}$-regular pair in $G'$ with density at least $d/2$ for each $i \in [k-1]$.

Set $\eps_0:=(d/2-\sqrt{\eps})^{\Delta}/(2+\Delta)$ where $\Delta$ is the maximum degree of $H':=P(t_1-1, t_2, \dots, t_k)$. By Lemma~\ref{simo} there are at least $(\eps _0 \eta m/2)^{h-1}$ copies of $H'$ in $G[V'_1\cup \dots \cup V'_k]$. Moreover, as $V'_2 \subseteq N^+_G(x) \cap N^+_G(y)$, each such copy of $H'$ together with $x$ forms a copy $H_x$ of $H$ in $G$, and each such copy of $H'$ together with $y$ forms a copy $H_y$ of $H$ in $G$.

Note that $$(\eps _0 \eta m/2)^{h-1}\stackrel{(\ref{hier3})}{\geq} (\eps _0 \eta n/(4L))^{h-1}.$$ 
Set $\gamma := (\eps _0 \eta /(4L))^{h-1}$ and $\xi := (\gamma /2)^h/4$.
We have shown that given any $x,y \in V(G)$, there are at least $\gamma n^{h-1}$
sets $X \subseteq V(G)$ of size $h-1$ such that both $G[X \cup \{x\}]$ and $G[X \cup \{y\}]$ 
contain spanning copies of $H$. 
Lemma~\ref{lolemma} now implies 
$V(G)$ contains a set $M$ so that 
\begin{itemize}
\item $|M|\leq \xi n$;
\item $M$ is an $H$-absorbing set for any $W \subseteq V(G) \setminus M$ such that $|W| \in h\mathbb N$ and  $|W|\leq \xi ^2 hn/2 $.
\end{itemize}
\end{proof}

With Theorem~\ref{approxmainthm} and Lemma~\ref{absorbmainthm} at hand, it is now straightforward to prove
Theorem~\ref{mainthm}.

\begin{proof}[Proof of Theorem~\ref{mainthm}]
Let $H$ be as in the statement of the theorem and set $h:=|H|$. 
Recall that it suffices to prove the case when 
 $H=P(t_1, \dots, t_k)$ for some 
 $k \geq 2$ and $t_1,\dots, t_k \in \mathbb N$.
Define  constants $\eta _1 , \xi >0$ and  $n_0\in \mathbb N$ so that
\begin{align}\label{hier4}
    \frac{1}{n_0} \ll \eta _1 \ll \xi \ll \eta, \frac{1}{h}.
\end{align}
Let $G$ be a digraph on $n \geq n_0$ vertices as in the statement of the theorem.
By Lemma~\ref{absorbmainthm},
$V(G)$ contains a set $M$ so that 
\begin{itemize}
\item $|M|\leq \xi n$;
\item $M$ is an $H$-absorbing set for any $W \subseteq V(G) \setminus M$ such that $|W| \in h\mathbb N$ and  $|W|\leq \xi ^2 hn/2 $.
\end{itemize}

Set $G':=G\setminus M$. By (\ref{hier4}), $\delta (G') \geq (1+\eta _1)|G'|$. Thus, $G'$ contains an $H$-tiling $\mathcal H_1$ covering  all but at most $\eta _1 |G'| \leq \xi ^2 hn/2$ of its vertices.
Moreover, the choice of $M$ ensures that there is an $H$-tiling $\mathcal H_2$ covering precisely those vertices in $G$ that are not in $\mathcal H_1$. Thus, $\mathcal H_1 \cup \mathcal H_2$ is our desired $H$-factor in $G$.
\end{proof}

\section{Proof of the EL-Zahar-type result}\label{sec:elzahar}
\subsection{Auxiliary results for the proof of Theorem~\ref{thm:elzthm}}
In this subsection we provide a few results that are needed for the proof of Theorem~\ref{thm:elzthm}. The first result provides a minimum semi-degree condition for forcing every orientation of a Hamilton cycle. Note that 
an asymptotic version of this result (see \cite{haggkvist1995oriented}) is also sufficient for our purposes.

\begin{theorem}\cite{arbor_hamcyc}\label{thm:oriented_ham_cycle}
  There exists $n_{\ref{thm:oriented_ham_cycle}}\in \mathbb N$ such that when 
  $n \ge n_{\ref{thm:oriented_ham_cycle}}$, 
  the following holds for every $n$-vertex digraph $G$. 
  If $\delta^0(G) > n/2$, then $G$ contains every orientation of a Hamilton cycle.
\end{theorem}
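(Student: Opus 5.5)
This theorem is due to DeBiasio, Kühn, Molla, Osthus and Taylor (the citation \cite{arbor_hamcyc}), and its proof is long. A self-contained argument is not feasible here, so what follows is the strategy I would use to reprove it. The natural route is the \emph{stability/absorption} dichotomy, used with the regularity and absorbing tools of Section~\ref{sec:pre}. Fix an orientation $C$ of a Hamilton cycle. We may assume $C$ is not the directed Hamilton cycle, since that case is Ghouila-Houri's theorem~\cite{gh}. Then $C$ has many \emph{sink} and \emph{source} vertices, or long directed stretches, and the proof splits according to the structure of $C$ (e.g.\ whether $C$ has at least a small linear number of changes of direction).

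\textbf{Non-extremal case.} Fix $\eps \ll \beta$, and suppose $G$ is not $\beta$-close to either of the two extremal structures. These are: (i) $V(G)$ splits into two halves with few edges inside one of them, essentially a complete bipartite digraph; (ii) $V(G)$ splits into two halves with few edges between them. In this case we apply Lemma~\ref{lem:reg}. By a version of Proposition~\ref{prop:reduceddegree} for semi-degree, the reduced digraph $R$ satisfies $\delta^0(R) \ge (1/2-2d)|R|$. Using non-extremality, we find in $R$ a spanning structure of regular pairs: a directed Hamilton cycle together with a near-perfect matching, or a $1$-factor of short cycles linked by paths. Into this structure we can embed long pieces of $C$, each winding back and forth, in a balanced way, in the spirit of the proof of Theorem~\ref{approxmainthm}.

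Before the regularity step we set aside an absorbing path structure. For every vertex $x$, many short oriented paths $P$ (matching a short segment of $C$) have the property that $P$ and $P$ with $x$ inserted are both segments of the required orientation. Reserving a random family of these, as in Lemma~\ref{lolemma}, absorbs the leftover vertices. Finally, we connect the pieces using short \emph{connecting paths of prescribed orientation}. These exist between any two vertices because $\delta^0 > n/2$ gives common neighbourhoods; this is the key expansion fact.

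\textbf{Extremal case.} When $G$ is close to (i) or (ii), we argue directly. In case (ii), the condition $\delta^0(G) > n/2$ forces enough edges crossing between the halves to route $C$ into each half and back with the right parities. In case (i), $C$ must be embedded almost bipartitely. Here the counts of vertices of $C$ that must lie in the independent-ish side have to match $|A|$ exactly, and the strict inequality $\delta^0 > n/2$ provides the few extra edges inside $A$ or $B$ needed to fix the imbalance.

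\textbf{Main obstacle.} I expect the main difficulty to be the extremal case (i) for orientations with very few direction changes. There, exact parity and balance arguments are delicate and no asymptotic slack is available. For this paper's purposes, the asymptotic version~\cite{haggkvist1995oriented} with $\delta^0 \ge (1/2+\eta)n$ suffices. In that version the extremal analysis disappears, and only the absorbing, regularity and connecting steps remain.
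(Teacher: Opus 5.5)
You have no proof in the paper to compare against. The theorem is imported as a black box from \cite{arbor_hamcyc}, and the authors remark just before stating it that the asymptotic version \cite{haggkvist1995oriented} would suffice. That is because it is only applied, in Lemmas~\ref{lem:elzlem} and~\ref{lem:large_cycles}, to digraphs with $\delta^0\ge(1/2+c)|G|$ for a constant $c>0$. Your closing paragraph makes the same observation, and that is all the paper relies on. As a proof of the sharp statement, however, your text is a plan whose load-bearing steps are asserted rather than proved. For instance, you assert without justification the spanning structure in $R$ and the balanced embedding of an arbitrary orientation into it. At least one step is also justified incorrectly.

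\emph{(a) The ``key expansion fact'' does not follow from the degree condition.} At the threshold, $\delta^0(G)>n/2$ only gives $|N^\diamond(x)\cap N^\circ(y)|\ge 2\delta^0(G)-n$, that is, one or two common neighbours. This is essentially attained: take two complete digraphs on $n/2$ vertices joined by two disjoint perfect matchings of double edges. Then $\delta^0=n/2+1$, yet $|N^+(x)\cap N^+(y)|\le 4$ for $x,y$ on opposite sides. Connecting through a small reserved set while avoiding used vertices needs linearly many connecting paths, and the absorbers need linearly many gadgets per vertex. In the non-extremal case these counts must come from non-extremality itself, via a ``non-extremal implies robust out-expander'' type lemma. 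That lemma is the engine of the non-extremal case, and it is absent from your plan.

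\emph{(b) The absorbing step uses the wrong tool.} Lemma~\ref{lolemma} produces absorbing sets for $H$-\emph{factors}, not an absorbing path inside a single spanning cycle. Moreover, for a non-directed orientation, inserting an absorbed vertex into a path shifts the required orientation of every later edge of that path. So your gadgets must stay valid under an unknown shift, or the absorbing path must sit on a directed or suitably periodic stretch of $C$. Your description of the absorbers does not address this.

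\emph{(c) The extremal cases are only described in words.} These cases are where the sharp threshold is decided. In particular, the antidirected Hamilton cycle genuinely needs $\delta^0\ge n/2+1$ (see \cite{debiasiomolla}), so the strict inequality is essential there. Saying that ``the strict inequality provides the few extra edges needed'' is a claim, not an argument.

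So the proposal does not prove the theorem. For this paper that is harmless, since only the cited result, or its asymptotic version, is used.
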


The following lemma is the main tool in our proof of Theorem~\ref{thm:elzthm}.
The iterative approach is very similar to an argument in \cite{EM}.
\begin{lemma}\label{lem:elzlem}
  For every $L \in \N$ and $\eta, \gamma > 0$, 
  there exists $n_{\ref{lem:elzlem}} := n_{\ref{lem:elzlem}}(L, \eta, \gamma)\in \mathbb N$ such that 
  for every integer $n \ge n_{\ref{lem:elzlem}}$ the following holds.
  Let $\ell_1, \dotsc, \ell_t \ge 3$ be integers of size at most $L$
  such that 
  \begin{equation}\label{eq:size_of_n}
    \ell_1 + \dotsm + \ell_t \le (1 - \gamma)n
  \end{equation}
  and let $D_1, \dotsc, D_t$ be orientations of cycles of lengths
  $\ell_1, \dotsc, \ell_t$, respectively.
  If $G$ is an $n$-vertex digraph with
  \begin{equation*}
    \delta^0(G) \ge \frac{n + t}{2} + \eta n,
  \end{equation*}
  then $G$ contains vertex-disjoint copies of $D_1, \dotsc, D_t$.
  Furthermore, if we let $G'$ be the digraph formed by deleting the vertices of these cycles from $G$, then we have 
  \[
    \delta^0(G') \ge \left( \frac{1}{2} + \frac{\eta}{3} \right)|G'|.
  \]
\end{lemma}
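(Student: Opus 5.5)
Since every $\ell_i\le L$ is bounded, the plan is to embed $D_1,\dots,D_t$ one after another, greedily and in a carefully chosen order, in the style of~\cite{EM}. At each stage we keep the current leftover digraph $G_j$, obtained from $G$ by deleting the cycles already embedded. We then show that $G_j$ still contains a copy of the next cycle whose removal does not destroy the degree condition. The invariant we maintain is
\[
\delta^0(G_j)\ \ge\ \frac{|G_j|+(t-j)}{2}+\eta n - (\text{small error}),
\]
so each removed cycle may cost each remaining vertex at most about $(\ell_i+1)/2$ in- and out-neighbours, up to lower order terms. Choosing copies greedily cannot guarantee this vertex by vertex. Instead we average: we find many copies of $D_{j+1}$ and show that a typical one hits few neighbourhoods heavily. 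Once all cycles are placed, $|G_t|\ge\gamma n$ and $\delta^0(G_t)\ge |G_t|/2+\eta n-o(n)$. Since $n\le|G_t|/\gamma$, this gives $\delta^0(G_t)\ge(1/2+\eta/3)|G_t|$ with plenty of room, which is the final claim.

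For a single step, write $F=G_j$ and $m=|F|\ge\gamma n$. The degree condition forces $\delta^0(F)>(1/2+\eta)m$ roughly. By a supersaturation argument (for instance Theorem~\ref{thm:oriented_ham_cycle} applied to random subsets of size $\ell$, or the regularity route via Lemma~\ref{lem:reg} together with Lemma~\ref{simo} in a reduced digraph inheriting semi-degree above $1/2$), $F$ contains $\Omega(m^{\ell})$ copies of any orientation $D$ of an $\ell$-cycle. The key counting observation concerns a vertex $v$ and a copy $C$ of $D$. We say $C$ is bad for $v$ if $d^+(v,V(C))>(\ell+1)/2$ or $d^-(v,V(C))>(\ell+1)/2$. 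We want to choose $C$ so that, summed over $v$, the loss is at most $\lceil \ell/2\rceil$ in each direction for all but $o(m)$ vertices $v$. This matches the target: in the invariant, each cycle costs $\ell/2$ from $|G_j|/2$ and $1/2$ from $t/2$. The natural tool is a random copy. The expected value of $d^+(v,V(C))$ over a uniformly random copy is about $\ell\, d^+(v)/m$. That is too large when $d^+(v)$ is big, but then the slack for $v$ is also big. So we should track the excess $d^\pm_F(v)-|F|/2$ rather than the raw degrees.

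Concretely, I would define the potential $\Phi(F)=\min_v\min\{d^+_F(v),d^-_F(v)\}-(|F|+t')/2$, where $t'$ is the number of remaining cycles. I would then show that some copy $C$ of $D$ satisfies $\Phi(F-C)\ge\Phi(F)-o(1)$ for all but a few vertices. The exceptional vertices would be handled by reserving them: they are absorbed into later cycles first, or covered using the extra $\eta n$ slack, which is spread over at most $t\le n/3$ steps but is only needed for $o(n)$ vertices in total. The main obstacle is exactly this step: guaranteeing that no vertex loses more than its share at any step, since a greedy error of even $1$ per step would accumulate over linearly many steps. To keep the error controlled, I would first group the cycles into batches of size $\beta n$, with $\beta\ll\gamma,\eta$, and within each batch embed simultaneously via an almost-perfect tiling of a random-like structure (regularity plus Lemma~\ref{simo}). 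The batch then intersects each vertex's neighbourhood proportionally, up to $\pm \beta\eta n/10$ by Lemma~\ref{chernoff}-type concentration. This costs only $\eta n/10$ in total over $1/\beta$ batches, and yields the stated bounds.
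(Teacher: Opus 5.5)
There is a genuine gap, and it sits at the step you yourself flag as the main obstacle. Your batching step asserts that a batch of $\beta n$ cycles, embedded via regularity and Lemma~\ref{simo}, meets every $N^\pm(v)$ proportionally up to $\pm\beta\eta n/10$ ``by Chernoff''. Concentration only applies if the vertex set of the batch is essentially a uniformly random set, and a set of cycles found inside a cluster structure is not. To make it one, the batch would have to almost exactly cover a random set $S$. That means an almost-perfect tiling of $G[S]$ by prescribed, possibly odd, cycles at semi-degree about $(|S|+k)/2$, which is essentially the lemma itself.

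In fact the claimed proportionality is false. Let $G$ be the complete $3$-partite digraph with parts $A,B,C$, where $|A|=a:=t+2\eta n$ and $|B|=|C|=(n-a)/2$. Let every $D_i$ be a triangle, with $3t\approx(1-\gamma)n$ and $\gamma>6\eta$; this is allowed, since the lemma is stated for arbitrary $\gamma,\eta$. Then $\delta^0(G)=(n+a)/2=(n+t)/2+\eta n$. But every triangle has one vertex in each part, so any $k$ triangles meet $N^\pm(v)=A\cup C$ (for $v\in B$) in exactly $2k$ vertices. Proportionality predicts $\frac{3k}{2}(1+a/n)\approx 2k-\frac{k}{2}(\gamma-6\eta)$. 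The discrepancy is linear in $k$, far beyond your error term. Equivalently, a random $3k$-set contains only about $3ka/n<k$ vertices of $A$, so it cannot be almost covered by $k$ triangles. The lemma itself is fine here: each triangle costs a vertex of $B$ exactly $2=(3+1)/2$. What fails is the bookkeeping. One vertex of each cycle must be allowed to sit non-proportionally, and its cost must be charged to the $t/2$ surplus.

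That is the missing idea in the paper's proof. Each cycle of length $\ell$ is split into a path on $\ell-1$ vertices plus one closing vertex. In each of $R$ rounds, a uniformly random subset $U_1$ of the current leftover set, of size $\sum_\ell(\ell-1)q_\ell$, is covered \emph{exactly} by a spanning system of paths of the required orientations. These paths come from Theorem~\ref{thm:oriented_ham_cycle}; paths only need semi-degree just above $|U_1|/2$, so there is no tripartite obstruction. Hence the cost of the paths really is proportional. The $|q|$ closing vertices are then chosen arbitrarily from a fresh reserved random part $W_r$. There every vertex has $d^\pm(v,W_r)\ge(|W_r|+|q|)/2+\sigma|W_r|$, so the surplus $|q|/2$ exactly pays for the $|q|$ arbitrary deletions. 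The leftover keeps ratio $1/2+\sigma$, and each round uses its own untouched $W_r$. So no error accumulates, and the final semi-degree claim follows directly.
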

\begin{proof}
  For every $\ell \in [L]$, let $p_\ell$ be the number of 
  cycles in $D_1, \dotsc, D_t$ of length $\ell$.
    Define $\sigma := \eta/3$ and 
    let $\beta, \gamma > 0$ and $R \in \N$ be such that 
    \begin{equation}
      \label{eq:constants_lem}
      0 < \frac{1}{n} \ll \beta \ll \frac{1}{R} \ll \gamma, \sigma, \frac{1}{L}.
    \end{equation}

    Let $G$ be an $n$-vertex digraph as in the statement of the lemma.
    Then (\ref{eq:constants_lem}) implies that $\sum_{\ell \in [L]} R \ell < \sigma n$, so we can greedily construct a collection of vertex-disjoint cycles in $G$
    consisting of $R$ cycles of every length $\ell \in [L]$ for which $p_\ell > 0$.
    So, for notational convenience,  we can assume that $p_\ell$ is divisible by $R$ for every
    $\ell \in [L]$ and
    \begin{equation}\label{eq:min_degree_condition}
    \delta^0(G) \ge \frac{n + t}{2} + 2 \sigma n.
    \end{equation}
    For every $\ell \in [L]$, let $q_\ell := p_\ell/R$ and let $q := (q_1, \dotsc, q_L)$.
    Define 
    \[
       |q| := \sum_{i=1}^{L} q_i,
    \]
    and note that $|q| = t/R$.
    We call a collection of vertex-disjoint cycles in $G$ a \textit{$q$-cycle tiling} if it consists of exactly 
    $q_\ell$ cycles of length $\ell$ for every $\ell \in [L]$.
    We will construct the desired cycles in $R$ rounds.
    In each round $r \in [R]$, we will find a $q$-cycle tiling that is 
    vertex-disjoint from the $q$-cycle tilings constructed in the $r-1$ previous rounds.

    We can assume that
      \begin{equation}
        \label{eq:q_lower_bound}
        |q| \ge \beta n,
      \end{equation}
      as otherwise 
      $\sum_{i=1}^{t} \ell_i \le L t = L R |q| \stackrel{\eqref{eq:constants_lem}}{<} \sigma n$,
    and we can greedily find the desired vertex-disjoint cycles.

    Let 
    $m := \floor{\frac{n}{R+1}}$ and
   note that 
    \begin{equation}\label{eq:m_lower_bound}
    m 
    \ge
    \frac{n-R}{R+1} 
    = \frac{n}{R} - \frac{n}{R(R+1)} - \frac{R}{R+1},
    \end{equation}
    so 
      \begin{equation}
        \label{eq:size_of_q_cycle_tiling}
        m 
        \stackrel{\eqref{eq:size_of_n},\eqref{eq:m_lower_bound}}{\ge} 
        \frac{\sum_{i = 1}^{t}\ell_i}{R} + \frac{\gamma n}{R} - \frac{n}{R(R+1)} - \frac{R}{R+1}
        \stackrel{\eqref{eq:constants_lem}}{\ge} 
        \frac{\sum_{i=1}^{t} \ell_i}{R} = 
        \sum_{\ell \in [L]} \ell \cdot q_{\ell}. 
      \end{equation}
    
    Uniformly at random select a partition of $V(G)$ into $R+1$ parts $W_1, \dotsc, W_{R+1}$ 
    so that 
    \[
    |W_1| = |W_2| = \dotsm = |W_R| = m \quad \text{and} \quad |W_{R+1}| = n - Rm \ge m.
    \]
    For every $r \in [R+1]$, we have 
    \[
      \frac{|W_r| \cdot t}{n} \ge 
      \frac{mt}{n} \stackrel{\eqref{eq:m_lower_bound}}{\ge} 
      \frac{t}{R} - \frac{t}{R(R+1)} - \frac{tR}{n(R+1)}
      \ge |q| - \frac{n}{R(R+1)} - \frac{R}{R+1}
      \stackrel{\eqref{eq:constants_lem}}{\ge} |q| - 0.1 \sigma |W_r|,
    \]
    so by the Chernoff and union bounds the following holds with high probability
      for every $v \in V(G)$:
      \begin{equation}
        \label{eq:mindegree_to_Wit}
        d^\pm_G(v, W_{r}) \stackrel{\eqref{eq:min_degree_condition}}{\ge} 
        \frac{|W_r|}{n} \left(\frac{n + t}{2} + 2 \sigma n\right) - 0.1 \sigma |W_r|
        \ge \frac{|W_r| + |q|}{2} + \sigma |W_r|.
      \end{equation}
    The proof of the lemma follows easily once the following claim is established.
  \begin{claim}
    \label{clm:main}
    For $r \in [R+1]$, let $U \subseteq V(G)\setminus  W_r$ where $|U| \ge m$.
    Suppose that for every $v \in V(G)$ we have $d_G ^\pm(v, U) \ge (1/2 + \sigma)|U|$.
    Then $G[U \cup W_r]$ contains every orientation of a $q$-cycle tiling.
    Furthermore, if $U'$ is the set of vertices in $U \cup W_r$ that are not covered
    by the $q$-cycle tiling, then for every $v \in V(G)$ we have
    $d_G ^\pm(v, U') \ge \left(1/2 + \sigma\right)|U'|$.
  \end{claim}
  \begin{proofclaim}
        Recall that $|U| \ge m \stackrel{\eqref{eq:size_of_q_cycle_tiling}}{\ge} \sum_{\ell \in [L]} \ell q_\ell$.
        Uniformly at random select a partition $\{U_0, U_1\}$ of $U$
        such that $|U_1| = \sum_{\ell \in [L]} (\ell-1) q_\ell$. 
       Note that 
        \[
        |U_1|, |U_0| \ge |q| \stackrel{\eqref{eq:q_lower_bound}}{\ge} \beta n, 
        \]
        so the Chernoff and union bounds imply that,
        with high probability,
        for every $v \in V(G)$ and $i \in \{0,1\}$,
        \begin{equation} 
          \label{eq:minsemideg_Ui}
          d_G ^{\pm}(v, U_i) > \left(\frac{1}{2} + \sigma \right)|U_i| - \beta \sigma n 
          \stackrel{\eqref{eq:q_lower_bound}}{\ge}
          \left(\frac{1}{2} + \sigma\right)|U_i| - \sigma |q|.
        \end{equation}
         Recall that $|U_1| = \sum_{\ell \in [L]} (\ell - 1)q_\ell \ge 2|q|$.
        So \eqref{eq:minsemideg_Ui} implies
         that $\delta^0(G[U_1]) \ge (1/2 + \sigma/2)|U_1|$. 
        Theorem~\ref{thm:oriented_ham_cycle} then implies that $G[U_1]$ 
        contains a vertex-disjoint collection of paths $\mathcal{P}$ 
        such that for every $\ell \in [L]$ the collection
        $\mathcal{P}$ contains exactly $q_\ell$ paths on $\ell - 1$ vertices where
        the paths are of any desired orientation.
        Note that the paths in $\mathcal{P}$ completely cover the vertices in $U_1$.

      For every $P \in \mathcal{P}$, and 
      every $\diamond, \circ \in \{-, +\}$, 
      when we let $u,v$ be the endpoints of $P$ we have
      \[
        |N_G ^\diamond(u, W_{r}) \cap N_G ^\circ(v, W_{r})| 
        \stackrel{\eqref{eq:mindegree_to_Wit}}{\ge}
        2 \left( \frac{|W_r| + |q|}{2} + \sigma |W_r| \right) - |W_r| 
        =|q| + 2\sigma |W_r| .
      \]
      Therefore, because $|\mathcal{P}| = \sum_{\ell \in [L]} q_\ell = |q|$,
       we can greedily extend each of the $|q|$ paths in $\mathcal{P}$
       into vertex-disjoint cycles of the desired orientations using one additional vertex in $W_{r}$.
       Let $\mathcal{C}$ be the resulting $q$-cycle tiling and
         let $U' := (U \cup W_r) \setminus V(\mathcal{C})$.
      Recall that, by construction, we have $|W_r \cap U'| = |W_r| - |q|$ and $U \cap U' = U_0$.
      Therefore, for every $v \in V(H)$ and $\diamond \in \{-, +\}$ 
      \begin{equation*} 
        \begin{split} 
          d_G^\diamond(v, U') &= d_G^\diamond(v, U_0) + d_G^\diamond(v, W_r \cap U') \\ 
          &\stackrel{\eqref{eq:minsemideg_Ui}}{\ge} \left(\frac{1}{2} + \sigma\right)|U_0| - \sigma |q| + d_G^\diamond(v, W_r) - |q| \\
          &\stackrel{\eqref{eq:mindegree_to_Wit}}{\ge} \left(\frac{1}{2} + \sigma\right)|U_0| - \sigma |q| + \left(\frac{|W_r| + |q|}{2} + \sigma |W_r|\right) - |q| \\
          &= \frac{|U_0|}{2} + \frac{|W_r| - |q|}{2} + \sigma|U_0| - \sigma |q| + \sigma |W_r| \\
          &= \left(\frac{1}{2} + \sigma\right)|U'|.
        \end{split}
      \end{equation*}\qedclaim
  \end{proofclaim}
  \medskip

  Recall that \eqref{eq:mindegree_to_Wit} implies that for every $v \in V(G)$ we have $d_G ^\pm(v, W_1) \ge (1/2 + \sigma)|W_1|$. 
  Therefore, we can apply
  Claim~\ref{clm:main} with $W_1$ playing the role of $U$ and $r = 2$ to construct a $q$-tiling $\mathcal{C}_1$
  in the digraph induced by $W_1 \cup W_2$.
  Let $U' \subseteq W_1 \cup W_2$ be the set of vertices not covered by this $q$-tiling.
  Note that 
  \[
  |U'| \ge 2m - |V(\mathcal{C}_1)| = 2m - \sum_{\ell \in [L]}\ell q_\ell \stackrel{\eqref{eq:size_of_q_cycle_tiling}}{\ge} m,
  \]
  and recall that Claim~\ref{clm:main} implies that
  for every $v \in V(G)$, we have 
  $d_G ^\pm(v, U') \ge (1/2 + \sigma)|U'|$.
  Therefore, we can apply Claim~\ref{clm:main} again 
    with $U'$ now playing the role of $U$ and $r=3$ to construct
    a second $q$-tiling $\mathcal{C}_2$ in $U' \cup W_3 \subseteq W_1 \cup W_2 \cup W_3$.
   We can proceed in this manner to construct $R$ vertex-disjoint $q$-cycle tilings.
  These $R$ vertex-disjoint $q$-cycle tilings form the desired vertex-disjoint copies of $D_1,\dots, D_t$ in $G$. \qedhere
\end{proof}

We will use the following straightforward application of the Chernoff bound.
\begin{lemma}\label{lem:partition}
For every $\sigma > 0$ there exists 
$n_{\ref{lem:partition}} := n_{\ref{lem:partition}}(\sigma)\in \mathbb N$ such that for every $n \ge n_{\ref{lem:partition}}$ 
the following holds for every $n$-vertex digraph $G$ and every integer $m$
such that $\sigma n \le m \le (1 - \sigma)n$. 
There exists a partition $\{U_1, U_2\}$ of $V(G)$ such that 
$|U_1| = m$ and $|U_2| = n - m$
where, for $j \in [2]$, we have
\begin{equation*}
\delta^0(G[U_j]) 
\ge \frac{\delta^0(G)}{n} |U_j| - |U_j|^{2/3}.
\end{equation*}
\end{lemma}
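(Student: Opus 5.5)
We choose the partition uniformly at random: let $U_1$ be a uniformly random $m$-subset of $V(G)$ and set $U_2 := V(G)\setminus U_1$, so $U_2$ is a uniformly random $(n-m)$-subset. The plan is to show that with positive probability every vertex has large in- and out-degree into its own part, by applying Lemma~\ref{chernoff} to each of the $4n$ relevant degree variables and taking a union bound.

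Fix $v \in V(G)$, $j\in[2]$ and $\diamond \in \{+,-\}$. Conditional on $v \in U_j$, the remaining $|U_j|-1$ vertices of $U_j$ form a uniformly random subset of $V(G)\setminus\{v\}$. Hence $X := d^\diamond_G(v,U_j)$ is hypergeometric with mean
\[
\mu = \frac{d^\diamond_G(v)(|U_j|-1)}{n-1} \ge \frac{\delta^0(G)}{n}|U_j| - 1 .
\]
To avoid conditioning, one can also simply bound $d^\diamond_G(v,U_j)$ for all $v$ and $j$ unconditionally. In that version $\mu = d^\diamond_G(v)|U_j|/n \ge \delta^0(G)|U_j|/n$. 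Since $v$ itself is not an out- or in-neighbour of $v$, this bound is then valid inside $G[U_j]$ whenever $v \in U_j$.

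Apply Lemma~\ref{chernoff} with $t := |U_j|^{2/3}$. If $\mu < t$ the required bound is trivial, since the right-hand side $\frac{\delta^0(G)}{n}|U_j| - |U_j|^{2/3}$ is then at most $0$. Otherwise $\mu \le |U_j|$, so
\[
\mathbb P\bigl[X \le \mu - t\bigr] \le e^{-t^2/(2\mu)} \le e^{-|U_j|^{1/3}/2} \le e^{-(\sigma n)^{1/3}/2},
\]
using $|U_j| \ge \sigma n$, which follows from $\sigma n \le m \le (1-\sigma)n$. A union bound over the at most $4n$ choices of $(v,j,\diamond)$ gives total failure probability at most $4n e^{-(\sigma n)^{1/3}/2} < 1$ once $n \ge n_{\ref{lem:partition}}(\sigma)$. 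Some outcome therefore satisfies every bound, and it gives
\[
\delta^0(G[U_j]) \ge \frac{\delta^0(G)}{n}|U_j| - |U_j|^{2/3} \quad \text{for both } j \in [2].
\]

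The only point needing care is making sure the mean is at least $\frac{\delta^0(G)}{n}|U_j|$ rather than merely close to it. The unconditional version above handles this directly. If one instead uses the conditional version, the mean is only $\frac{\delta^0(G)}{n}|U_j| - 1$, and the loss of $1$ can be absorbed by running Lemma~\ref{chernoff} with $t = |U_j|^{2/3} - 1$. The failure probability is still $e^{-\Omega(n^{1/3})}$, which changes nothing in the union bound.
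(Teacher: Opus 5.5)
Your proposal is correct and matches the paper's proof: a uniformly random partition with $|U_1|=m$, the hypergeometric variable $|N^\diamond_G(v)\cap U_j|$ with mean $d^\diamond_G(v)|U_j|/n$, Lemma~\ref{chernoff} with deviation $|U_j|^{2/3}$ and the bound $\mu\le |U_j|$, and a union bound over the $4n$ variables using $|U_j|\ge \sigma n$. Your separate treatment of the case $\mu<t$ and the remark on the conditional version are harmless refinements that the paper's argument does not need.
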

\begin{proof}
We select $\{U_1,U_2\}$ uniformly at random from all partitions  of $V(G)$ with 
$|U_1| = m$ and $|U_2| = n - m$.
Then, for every $v \in V(G)$, $j \in \{1,2\}$ and $\diamond \in \{-, +\}$, 
the random variable $|N_G ^\diamond(v) \cap U_j|$ has hypergeometric distribution with expected value
$\mu := (d_G^\diamond(v)/n )\cdot |U_j|$.
Since $\mu \le |U_j|$ and $|U_j| \ge \sigma n$, by Lemma~\ref{chernoff} we have
\[
\mathbb{P}
\left[ 
\left| \left|N_G ^\diamond(v) \cap U_j\right| < \mu - |U_j|^{2/3} \right]
\right] 
\stackrel{\eqref{eq:chernoff}}{\le} e^{-|U_j|^{4/3}/2\mu} \le e^{-|U_j|^{1/3}/2} \le e^{- (\sigma n)^{1/3}/2}.
\]
Since  
$4n \cdot e^{- (\sigma n)^{1/3}/2} < 1$, the union bound implies that there exists an outcome 
where $|N_G^\diamond(v) \cap U_j| \ge \mu - |U_j|^{2/3}$ for each of the $n \cdot 2 \cdot 2$ such random variables.
\end{proof}

The following argument is essentially identical to the proof of Proposition 5.1 in \cite{alon19962} except that we use Theorem~\ref{thm:oriented_ham_cycle}
instead of Dirac's theorem.  We reproduce it here for completeness.
\begin{lemma}\label{lem:large_cycles}
  For every $\eta>0$, there exists $L := L_{\ref{lem:large_cycles}}(\eta)\in \mathbb N$ such that
  the following holds.
  Let $\ell_1, \dotsc, \ell_t$ be a sequence of integers that are each at least $L$, 
  let $n := \ell_1 + \dotsm + \ell_t$,
  and let $D_1,\dotsc,D_t$ be orientations of cycles of lengths $\ell_1, \dotsc, \ell_t$, respectively.
  If $G$ is an $n$-vertex digraph with $\delta^0(G) \ge (1/2 + \eta)n$, then 
  $G$ contains vertex-disjoint copies of $D_1, \dotsc, D_t$.
\end{lemma}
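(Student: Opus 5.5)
We will follow the argument of Alon--Yuster (Proposition 5.1 in \cite{alon19962}): split the vertex set into parts, one per group of cycles, so that each part keeps the semi-degree condition, and then find every cycle as a Hamilton cycle of its part using Theorem~\ref{thm:oriented_ham_cycle}.

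\textbf{Step 1 (grouping).} Choose $L \gg 1/\eta$. We will partition the index set $[t]$ into groups $I_1,\dots,I_s$. Every group has total length $\sum_{i\in I_j}\ell_i$ in an interval $[\eta n/4, \eta n/2]$, apart from cycles that are individually long. We start with the cycles whose length is at least $\eta n/4$; each of these forms its own group. There are at most $4/\eta$ of them. We then concatenate the short cycles greedily until the running total first exceeds $\eta n/4$. The final leftover group may be too small, and we merge it into an existing group. This gives a bounded number $s \le 8/\eta$ of groups, each with total size between roughly $\eta n/4$ and $n$.

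\textbf{Step 2 (partitioning).} We apply Lemma~\ref{lem:partition} repeatedly, peeling off one group at a time, with $\sigma$ of order $\eta^2$. Since there are only $O(1/\eta)$ rounds, the losses $|U_j|^{2/3}$ accumulate to $o(|U_j|)$. We also keep track of the fact that the ratio $\delta^0/|{\rm current}|$ stays above $1/2+\eta/2$ throughout. This yields parts $U_1,\dots,U_s$ with $|U_j|=\sum_{i\in I_j}\ell_i$ and $\delta^0(G[U_j])\ge(1/2+\eta/2)|U_j|$.

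\textbf{Step 3 (inside a part).} Fix a part $U_j$ and write $N := |U_j|$. If $I_j$ is a single cycle, Theorem~\ref{thm:oriented_ham_cycle} finds $D_i$ directly, since $N\ge \eta n/4$ is large. Otherwise the cycles in $I_j$ are all shorter than $\eta n/4$ but at least $L$ long. In that case we split $U_j$ randomly once more, into pieces of sizes $\ell_i$ for $i\in I_j$. Each piece has size at least $L$, and its expected degree ratio exceeds $1/2$. Then Theorem~\ref{thm:oriented_ham_cycle} applied to each piece gives $D_i$ as an oriented Hamilton cycle.

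\textbf{Main obstacle.} The delicate point is this last random split. Concentration for a piece of size $\ell_i\ge L$ only holds with probability $1-e^{-\Omega(\eta^2 \ell_i)}$, and a group may contain about $N/L$ pieces, so a plain union bound fails. We plan to fix this as in Alon--Yuster by splitting hierarchically. We halve the sizes at each level (grouping the lengths into two nearly equal halves), applying Lemma~\ref{lem:partition} with a degree loss of $|U|^{2/3}$ per level. The total loss over levels is a geometric-type sum $\sum_k (N2^{-k})^{-1/3}$, dominated by the smallest scale $\approx L^{-1/3}$. This is at most $\eta/4$ once $L$ is large. The recursion stops at single cycles, whose parts have size at least $L\ge n_{\ref{thm:oriented_ham_cycle}}$ and semi-degree ratio above $1/2$. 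This halving recursion removes the need for Step 1's grouping, but it requires that each split be into two parts with sizes between $\sigma$ and $1-\sigma$ of the whole. Lengths can be very unbalanced (one large cycle plus many small ones), so we will choose $\sigma$ carefully. When one cycle is much longer than the rest of its group, we peel it off: Lemma~\ref{lem:partition} tolerates a ratio as small as $\sigma$, and a split can be unbalanced only when the remainder is already small compared to the current set, so this case will be handled by an adjusted bookkeeping argument.
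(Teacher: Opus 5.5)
Your final plan (drop the grouping, split recursively with Lemma~\ref{lem:partition} into two parts each holding between a $\sigma$- and a $(1-\sigma)$-fraction, sum the $|U_j|^{-1/3}$ losses geometrically down each branch, finish each single cycle with Theorem~\ref{thm:oriented_ham_cycle}) is exactly the paper's argument. You also correctly diagnosed why Steps~1--3 as written fail. The gap is the unbalanced case, which you defer to ``an adjusted bookkeeping argument''. If the longest cycle has $\ell_1>(1-\sigma)|U|$, no $\sigma$-balanced split exists. The remainder $|U|-\ell_1$ can then be as small as $L$ while $|U|$ is arbitrarily large (e.g.\ $t=2$, $\ell_2=L$, $\ell_1=n-L$), so no choice of $\sigma$ helps. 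Nor can Lemma~\ref{lem:partition} ``peel off'' the long cycle: it needs both parts to have size at least $\sigma|U|$, and its failure probability $e^{-|U_j|^{1/3}/2}$ must beat a union bound over $4|U|$ events. What breaks is the partition step itself, not the accounting of losses.

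The missing idea is that this case needs no random split. The other cycles have total length $|U|-\ell_1<\sigma|U|$, and after your geometric losses $\delta^0(G[U])\ge(1/2+\sigma)|U|$, so you can embed them greedily. Grow each one as an oriented path through unused vertices; an endpoint always has more than $|U|/2$ unused out- and in-neighbours. Close the path with a vertex of the relevant set $N^{\diamond}(u)\cap N^{\circ}(v)$. This set has at least $2\sigma|U|$ vertices in $U$, more than the fewer than $\sigma|U|$ vertices used so far. The $\ell_1$ remaining vertices then induce a digraph of minimum semi-degree at least $(1/2+\sigma)|U|-(|U|-\ell_1)=\ell_1-(1/2-\sigma)|U|>\ell_1/2$. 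Since $\ell_1\ge L\ge n_{\ref{thm:oriented_ham_cycle}}$, Theorem~\ref{thm:oriented_ham_cycle} gives $D_1$, and this also serves as the base case $|I|=1$. Otherwise every $\ell_i\le(1-\sigma)|U|$, and with $\sigma\le1/3$ you can add lengths in decreasing order until the sum first reaches $\sigma|U|$. This gives a split with both sides in $[\sigma|U|,(1-\sigma)|U|]$; ``nearly equal halves'' is not available in general, but it is not needed. So the two cases are exhaustive. With shrink factor $1-\sigma$ instead of $1/2$, the loss along a branch is at most $L^{-1/3}\sum_{i\ge0}(1-\sigma)^{i/3}\le\sigma$ once $1/L\ll\sigma$. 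The paper organises this by carrying $\sum_{i<k}((1-\sigma)^i/|U|)^{1/3}$ in the induction hypothesis and using $|U|\ge|U_j|/(1-\sigma)$.
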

\begin{proof} 
  Let $\sigma := \min\{\eta/2, 1/3\}$.
  We assume that
  \begin{equation}
    \label{eq:consts}
    \frac{1}{L} \ll \sigma.
  \end{equation}
  In particular, 
  we assume that $L \ge n_{\ref{thm:oriented_ham_cycle}}$ and $L \ge n_{\ref{lem:partition}}(\sigma)$.
  Therefore, Lemma~\ref{lem:partition} implies that
  for every $U \subseteq V(G)$ with $|U| \ge L$ and for every integer $m$ such that 
  $\sigma|U| \le m \le (1 - \sigma) |U|$,
  there exists a partition $\{U_1, U_2\}$ of $U$ such that $|U_1| = m$, $|U_2| = |U| - m$,
  and for $j \in [2]$
  \begin{equation}\label{eq:partition_semideg}
    \delta^0(G[U_j]) 
    \ge \frac{\delta^0(G[U])}{|U|} |U_j| - |U_j|^{2/3}
    = \left(\frac{\delta^0(G[U])}{|U|} - \frac{1}{|U_j|^{1/3}} \right)|U_j|.
  \end{equation}

  The lemma follows from the following claim (with $I := [t]$, $U := V(G)$, and $k := 0$).
  \begin{claim}
    Let $I \subseteq [t]$ be non-empty and let $U \subseteq V(G)$ 
    such that $\sum_{i \in I} \ell_i = |U|$.
    If there exists a non-negative integer $k$ such that 
    \begin{equation}
      \label{eq:error}
      \delta^0(G[U])  \ge 
      \left(\frac{1}{2} + 2\sigma - \sum_{i=0}^{k-1}\left(\frac{(1-\sigma)^{i}}{|U|}\right)^{1/3}\right)|U|,
    \end{equation}
    then there exists a collection $\{ C_i \}_{i \in I}$ of vertex-disjoint cycles in $G[U]$ where $C_i$ is a copy of $D_i$ for each $i \in I$.
  \end{claim}
  \begin{proofclaim}
    We will prove the claim by induction on $|I|$.
    Since $I$ is nonempty we have $|U| \ge L$, so
      \[
        \sum_{i=0}^{k-1}\left(\frac{(1 - \sigma)^{i}}{|U|}\right)^{1/3}
        \le  \frac{1}{L^{1/3}} \sum_{i=0}^{\infty} \left(\left(1 - \sigma\right)^{1/3}\right)^i
        =  \frac{1}{L^{1/3}\left(1-\left(1 - \sigma\right)^{1/3}\right)} 
        \stackrel{\eqref{eq:consts}}{\le} \sigma,
      \]
      which implies
      \begin{equation}\label{eq:Usemideg}
        \delta^0(G[U]) \stackrel{\eqref{eq:error}}{\ge} (1/2 + \sigma)|U|.
      \end{equation}
    For convenience, we can assume $I = \{1, \dotsc, |I|\}$
      and $\ell_1 \ge \ell_2 \ge \dotsm \ge \ell_{|I|}$.
    If $\ell_1 > (1 - \sigma)|U|$, 
      then $\ell_2 + \dotsc + \ell_{|I|} = |U| - \ell_1 < \sigma |U|$, and we can
      greedily construct a (empty when $|I| = 1$) collection of 
      vertex-disjoint copies $C_2, \dotsc, C_{|I|}$ of $D_2,\dots, D_{|I|}$, 
      respectively.
    We can then use 
      Theorem~\ref{thm:oriented_ham_cycle} to find a spanning copy $C_1$ of $D_1$
       in $G[U \setminus \bigcup_{i = 2}^{|I|} V(C_i)]$.
    Note that this proves the base case ($|I| = 1$) of the induction.
    So assume $\ell_{|I|} \le \ell_{|I|-1} \le \dotsm \le \ell_1 \le (1 - \sigma)|U|$.
    Then, because $\sigma \le 1/3$, there exists a partition $\{I_1, I_2\}$ of $I$
      such that for $j \in [2]$ we have
      \[
        \sigma |U| \le \sum_{i \in I_j} \ell_i \le (1 - \sigma) |U|.
      \]
    With \eqref{eq:partition_semideg} this implies that
      there exists a partition $\{U_1, U_2\}$ of $U$ such that for $j \in [2]$
      \begin{equation}\label{eq:size_of_Uj}
        \sigma |U| \le |U_j| = \sum_{i \in I_j} \ell_i  \le (1 - \sigma)|U|
      \end{equation}
      and
      \[
        \begin{split}
          \delta^0(G[U_j]) 
          & \stackrel{\eqref{eq:partition_semideg}, \eqref{eq:error}}{\ge}
          \left(\frac{1}{2} + 2\sigma - \sum_{i=0}^{k-1}\left(\frac{(1 - \sigma)^{i}}{|U|}\right)^{1/3} - \frac{1}{|U_j|^{1/3}}\right)|U_j| \\
          & \stackrel{\eqref{eq:size_of_Uj}}{\ge} \left(\frac{1}{2} + 2\sigma - \sum_{i=0}^{k-1}\left(\frac{(1 - \sigma)^{i}}{\frac{|U_j|}{1 - \sigma}}\right)^{1/3} - \frac{1}{|U_j|^{1/3}}\right)|U_j| \\
          & = 
          \left(\frac{1}{2} + 2\sigma - \sum_{i=0}^{k}\left(\frac{(1- \sigma)^{i}}{|U_j|}\right)^{1/3}\right)|U_j|. 
        \end{split}
      \]
    The claim then follows by the induction hypothesis applied to both 
      $G[U_1]$ and $G[U_2]$.\qedclaim
\end{proofclaim}
\end{proof}

\subsection{An absorbing lemma for Theorem~\ref{thm:elzthm}}
The following absorbing lemma follows quickly from Lemma~\ref{lolemma}.
\begin{lemma}\label{lem:clabs} 
  Let $C$ be an orientation of a cycle on $\ell\geq 3$ vertices.
  Given any $\eta > 0$, there exists $\xi_{\ref{lem:clabs}} := \xi_{\ref{lem:clabs}}(\eta, \ell)>0$ such that,
  for every $0 < \xi \le \xi_{\ref{lem:clabs}}$, there exists
  $n_{\ref{lem:clabs}} = n_{\ref{lem:clabs}}(\xi, \eta, \ell) \in \mathbb N$ such that the following holds for all $n \ge n_{\ref{lem:clabs}}$
  and every $n$-vertex digraph $G$.
  If either $\ell = 3$ and 
  \[
    \delta^0(G) \ge \left(\frac{2}{3} + \eta\right)n,
  \]
  or $\ell \neq 3$ and 
  \[
    \delta^0(G) \ge \left(\frac{1}{2} + \eta\right)n,
  \]
  then $V(G)$ contains a set $M$ so that 
  \begin{itemize}
    \item $|M| \le \xi n$;
    \item $M$ is a $C$-absorbing set for any $W \subseteq V(G) \setminus M$ such that
      $|W| \in \ell \mathbb N$ and $|W| \le \ell \cdot \xi^2 n/2$.
  \end{itemize}
\end{lemma}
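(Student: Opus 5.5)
We will apply Lemma~\ref{lolemma} with $H:=C$, $h:=\ell$ and $t:=1$. So the task is to show that there is $\gamma=\gamma(\eta,\ell)>0$ such that, for any $x,y\in V(G)$, there are at least $\gamma n^{\ell-1}$ sets $X$ of size $\ell-1$ with both $G[X\cup\{x\}]$ and $G[X\cup\{y\}]$ containing a copy of $C$. Lemma~\ref{lolemma} then gives a set $M$ with $|M|\le(\gamma/2)^\ell n/4$ that absorbs every $W$ with $|W|\in\ell\mathbb N$ and $|W|\le(\gamma/2)^{2\ell}\ell n/32$. Set $\xi_{\ref{lem:clabs}}:=(\gamma/2)^\ell/4$; then $\xi_{\ref{lem:clabs}}^2\ell n/2=(\gamma/2)^{2\ell}\ell n/32$, which matches the statement for $\xi=\xi_{\ref{lem:clabs}}$. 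For smaller $\xi$ we run the same argument with $\gamma$ replaced by a smaller $\gamma'$ satisfying $(\gamma'/2)^\ell/4=\xi$. This is legitimate because the hypothesis of Lemma~\ref{lolemma} is monotone in $\gamma$.

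\textbf{Counting absorbing sets.} Write $C$ as $v_1v_2\dots v_\ell v_1$, where $v_1$ is the vertex to be played by $x$ or $y$. Let $\diamond$ be the direction of the edge between $v_1$ and $v_2$ as seen from $v_1$, and $\circ$ the direction of the edge between $v_1$ and $v_\ell$. We will count copies of the path $P=v_2\dots v_\ell$ (on $\ell-1$ vertices) with $v_2$ mapped into $A:=N^\diamond(x)\cap N^\diamond(y)$ and $v_\ell$ mapped into $B:=N^\circ(x)\cap N^\circ(y)$, avoiding $x$ and $y$. Each such copy, together with $x$ (respectively $y$), gives a copy of $C$.
\begin{itemize}
\item For $\ell\neq 3$: the semi-degree bound gives $|A|,|B|\ge 2\eta n$. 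We choose $v_2\in A$ and $v_\ell\in B$ distinct; there are $\Omega(\eta^2n^2)$ such choices. When $\ell=4$, we need one common neighbour of the appropriate types of $v_2$ and $v_4$. Since $\delta^0\ge(1/2+\eta)n$, any two vertices have at least $2\eta n$ common neighbours of any prescribed pair of directions. When $\ell\ge5$, we extend greedily: the interior vertices $v_3,\dots,v_{\ell-2}$ each have at least $n/2$ choices, and the final vertex $v_{\ell-1}$ must be a common neighbour (with prescribed directions) of $v_{\ell-2}$ and $v_\ell$, giving at least $2\eta n-\ell$ choices. In total we get at least $c(\eta,\ell)n^{\ell-1}$ ordered copies. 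Dividing by $(\ell-1)!$ gives at least $\gamma n^{\ell-1}$ sets.
\item For $\ell=3$: we need an edge, of the correct direction, between $A$ and $B$. Now $\delta^0\ge(2/3+\eta)n$ gives $|A|,|B|\ge(1/3+2\eta)n$. Each vertex $a\in A$ has at least $|B|-(1/3-\eta)n\ge 3\eta n$ neighbours of the prescribed direction in $B$. This gives $\Omega(\eta^2 n^2)$ suitable edges.
\end{itemize}

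\textbf{Main obstacle.} The only delicate point is the case $\ell=3$. There the path $P$ has a single edge, so no free intermediate vertex is available to absorb the loss. This is exactly why the stronger bound $2/3$ is needed: it forces $A$ and $B$ to be large enough that the prescribed-direction edges between them exist in quadratic number. For $\ell\ge4$ the slack at $1/2$ suffices, because of the common-neighbour step.

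Finally, we take $n_{\ref{lem:clabs}}$ large enough for Lemma~\ref{lolemma} and to absorb the $O(1)$ vertex-exclusion losses.
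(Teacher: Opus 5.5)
Your proposal is correct and follows essentially the same route as the paper: apply Lemma~\ref{lolemma} with $h=\ell$ and $t=1$, and count $(\ell-1)$-vertex paths whose endpoints lie in the appropriate common neighbourhoods of $x$ and $y$. For $\ell\ge 4$ the path is closed through a common neighbour with prescribed directions, and for $\ell=3$ through a prescribed-direction edge between two sets of size at least $(1/3+2\eta)n$. Your explicit treatment of smaller $\xi$, by shrinking $\gamma$ so that $(\gamma/2)^\ell/4=\xi$, is exactly what the paper's choice ``$\gamma\ll\eta,1/\ell$ and $\xi:=(\gamma/2)^\ell/4$'' encodes.
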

\begin{proof}
Define $\gamma,\xi>0$ so that $\gamma \ll \eta, 1/\ell$
and $\xi := \left(\gamma/2\right)^{\ell}/4$. Let $n\in \mathbb N$ be sufficiently large.

Let $x,y \in V(G)$.
Let $\mathcal{X}$ be the collection of $(\ell-1)$-sets $X\subseteq V(G)$ such that both $G[X \cup \{x\}]$
and $G[X \cup \{y\}]$ contain a copy of $C$.

We will show that
\[
  \ell! \cdot |\mathcal{X}| \ge (\eta n)^{\ell - 1} \ge \ell! \cdot \gamma n^{\ell - 1}.
\]
By Lemma~\ref{lolemma} (with $h := \ell$ and $t:=1$) this will prove the lemma because 
\[
\left(\gamma/2\right)^{\ell} n /4 = \xi n \qquad\text{and}\qquad
\left(\gamma/2\right)^{2\ell} \ell n / 32 = \ell \cdot \xi^2 n/2.
\]

First suppose $\ell = 3$; so $\delta^0(G) \ge \left( \frac{2}{3} + \eta\right)n$.
For every $\diamond \in \{-, +\}$, we have 
$|N_G ^\diamond(x) \cap N_G^\diamond(y) | \ge (1/3 + 2 \eta) n > \eta n$.
Furthermore, for every $z \in N_G^\diamond(x) \cap N_G^\diamond(y)$ and $\circ, \bullet \in \{+, -\}$,
we have $|N_G^\circ(x) \cap N_G^\circ(y) \cap N_G ^\bullet(z)| > \eta n$.
This implies that $2 |\mathcal{X}| \ge (\eta n)^2$, which completes the proof in this case.

Now assume $\ell \ge 4$. Using the fact that $\delta^0(G) \ge \left( \frac{1}{2} + \eta\right)n$,
we will show that we can construct at least $(\eta n)^{\ell-1}$ paths $v_1, \dotsc, v_{\ell-1}$ in $G$
so that $\{v_1, \dotsc, v_{\ell-1}\} \in \mathcal{X}$.
This will then imply that $\ell! \cdot |\mathcal{X}| \ge (\eta n)^{\ell - 1}$, which will complete the proof.

For every $\diamond \in \{-, +\}$, we have 
$|N_G^\diamond(x) \cap N_G^\diamond(y)| \ge 2 \eta n$,
so we start the construction by picking $v_1$ and then $v_{\ell - 1}$ distinct from $v_1$ in at least $(\eta n)^2$ ways.
We can iteratively select 
$v_2, \dotsc, v_{\ell - 3}$ in at least $(\eta n)^{\ell - 4}$ ways. (When $\ell = 4$, this sequence
of vertices is empty.)
Finally, because $|N_G ^\diamond(v_{\ell-3}) \cap N_G ^\circ(v_{\ell - 1})| \ge 2 \eta n$,
for every $\diamond, \circ \in \{-, +\}$, we can complete the construction by selecting $v_{\ell-2}$
in one of at least $\eta n$ ways.
As this gives us $(\eta n)^2 \cdot (\eta n)^{\ell - 4} \cdot \eta n = (\eta n)^{\ell - 1}$
ways of constructing such a path, the proof is complete.
\end{proof}

\subsection{Proof of Theorem~\ref{thm:elzthm}}
    Let $L \in \N$ and $\gamma, \xi > 0$ be such that 
    \begin{equation}\label{eq:conststhm}
      0 < \frac{1}{n} \ll \gamma \ll \xi \ll \frac{1}{L} \ll \eta.
    \end{equation}
    Let $G$ be an $n$-vertex digraph as in the statement of the theorem.
    
    We will call a collection of vertex-disjoint cycles a \textit{cycle tiling}
    and we will say two cycle tilings, $\mathcal{C}$ and $\mathcal{C'}$,
  are \textit{isomorphic} if there exist a bijection $f: \mathcal{C} \to \mathcal{C'}$ 
  such that $C$ is isomorphic to $f(C)$ for every $C \in \mathcal{C}$.
    Let $\mathcal{C}$ be a cycle tiling consisting of vertex-disjoint copies of $D_1, \dotsc, D_t$.
    Let $\Csm$ be the collection of (small) cycles in $\mathcal{C}$
      with length at most $L$ and let $\Clg := \mathcal{C} \setminus \Csm$ be the remaining (large) cycles in $\mathcal{C}$.
   \medskip

    \noindent\textbf{Case 1: } $\sum_{C \in \Clg} |C| \ge \gamma n$.
    In this case, we have
    \[
    \sum_{C \in \Csm}|C| \le n - \sum_{C \in \Clg}|C| \le (1 - \gamma)n,
    \]
    and, by \eqref{eq:conststhm}, we can assume  $n \ge N_{\ref{lem:elzlem}}(L, \eta, \gamma)$.
    Therefore, we can apply Lemma~\ref{lem:elzlem} to construct a cycle tiling isomorphic to $\Csm$.
    By Lemma~\ref{lem:elzlem}, if we let $G'$ be the digraph formed by removing the vertices of such a cycle tiling from $G$,
    we have $\delta^0(G') \ge (1/2 + \eta/3)|G'|$. 
    By \eqref{eq:conststhm} we can assume that $L \ge  L_{\ref{lem:large_cycles}}(\eta/3)$,
    so Lemma~\ref{lem:large_cycles} implies that there exist a 
    cycle tiling in $G'$ that is isomorphic to $\Clg$.
    Together, this ensures our desired copies of 
    $D_1,\dots, D_t$ in $G$.
    \medskip

    \noindent \textbf{Case 2: } $\sum_{C \in \Clg}|C| < \gamma n$.
     Let $\Ctri$ be the orientations of a triangle that appear in $\Csm$.
     If $|\Ctri| \ge n/3 - \eta n$, then 
     let $C^*$ be the orientation of a triangle that occurs most frequently among
     the cycles in $\Ctri$.
     Otherwise, let $C^*$ be the orientation of a cycle that occurs most frequently among
    the cycles in $\Csm \setminus \Ctri$.
    Let $\ell^*$ be the length of $C^*$.

   Note that 
   we clearly have $\delta^0(G) \ge (1/2 + \eta)n$ and 
   if $C^*$ is an orientation of triangle, then we have 
     \[
     \delta^0(G) 
     \ge \frac{n + |\Ctri|}{2} + \eta n
     \ge \frac{n + n/3 - \eta n}{2} + \eta n
     = \left(\frac{2}{3} + \eta/2\right)n.
     \] 
   Since $\ell^* \le L$, 
    \eqref{eq:conststhm} implies that we can assume $\xi \le \xi_{\ref{lem:clabs}}(\eta/2, \ell^*)$ and 
      $n \ge n_{\ref{lem:clabs}}(\xi, \eta/2, \ell^*)$, so, in all cases, Lemma~\ref{lem:clabs}
      implies that there exists $M \subseteq V(G)$ such that 
    \begin{equation}\label{eq:size_of_M}
    |M| \le \xi n
    \end{equation}
    and
    \begin{equation}\label{eq:absprop}
      \text{$M$ is a $C^*$-absorbing set for every $W \subseteq V(G) \setminus M$
        with $|W| \in \ell^* \N$ and $|W| \le \ell^* \cdot \xi^2 n/2$.}
    \end{equation}

    If $C^*$ is an orientation of a triangle, then $C^*$ appears at least $(1/3 - \eta )n/2 \ge \xi n$
    times in $\Ctri$.
    Otherwise, $C^*$ occurs at least 
      \begin{equation*}\label{eq:xisize}
     \frac{\sum_{C \in \Csm \setminus \Ctri}|C|}{L^2 \cdot 2^L} =
     \frac{n - 3 |\Ctri| - \sum_{C \in \Clg}|C|}{L^2 \cdot 2^L} 
     >
     \frac{3\eta n - \gamma n}{L^2 \cdot 2^L} \stackrel{\eqref{eq:conststhm}}{\ge}  
     \xi n 
      \end{equation*}
   times in $\Csm$.
    Therefore, in all cases, we can form $\Csm'$ by removing exactly 
      \[
        \frac{|M|}{\ell^*} + \floor{\xi^2 n/2} \stackrel{\eqref{eq:conststhm}, \eqref{eq:size_of_M}}{<} \xi n
      \]
      cycles that are isomorphic to $C^*$ from $\Csm$.
      (Recall that \eqref{eq:absprop} implies that $|M|$ is a divisible by $\ell^*$.)

    By \eqref{eq:size_of_M} and as we are in Case~2, we can greedily find a collection of disjoint cycles in $G\setminus M$ that
    is isomorphic to $\Clg$.
    Form $G'$ be removing the vertices of these cycles from $G \setminus M$.
  The definitions imply that 
   \begin{equation}\label{eq:size_of_Csmpp}
     \left(\sum_{C \in \Csm'} |C|\right) + \ell^* \floor{\xi^2 n/2}
     = \left(\sum_{C \in \Csm} |C|\right) - |M| 
     = |G'|.
   \end{equation}
    In particular,
    \[
      (1 - \gamma) |G'| \stackrel{\eqref{eq:conststhm}}{\ge} |G'| - \ell^* \floor{ \xi^2 n/2} 
      \stackrel{\eqref{eq:size_of_Csmpp}}= \sum_{C \in \Csm'} |C|.
     \]
    As we are in Case~2, we also have
    \begin{equation}\label{eq:min_semideg_Gprime}
      \delta^0(G') \ge \frac{n + t}{2} + \eta n - |M| -  \sum_{C \in \Clg} |C|
      \stackrel{\eqref{eq:conststhm},\eqref{eq:size_of_M}}{\ge} 
      \frac{n + t}{2} + \frac{\eta}{4} n .
    \end{equation}
      By \eqref{eq:conststhm} we can assume $|G'| \ge n_{\ref{lem:elzlem}}(L, \eta/4, \gamma)$,
      so Lemma~\ref{lem:elzlem} implies that 
    there exists a collection of disjoint cycles in $G'$ that is isomorphic to $\Csm'$.
  Let $W$ be the set of vertices in $G'$ that are not covered by this collection.
    Then,
    \[
      |W| = |G'| - \sum_{C \in \Csm'} |C| \stackrel{\eqref{eq:size_of_Csmpp}}{=} 
      \ell^* \floor{\xi^2 n/2}.
    \]
      Therefore, \eqref{eq:absprop} implies that there is a 
       $C^*$-factor in $G[M \cup W]$. 
      That is, we have found a collection of exactly $(|M| + |W|)/\ell^* = |M|/\ell^* + \floor{\xi^2 n/2}$ 
      additional vertex-disjoint cycles isomorphic to $C^*$. 
      The union of this with the collection of previously constructed cycles is isomorphic to $\mathcal{C}$.
    \qed

\section{Proof of Theorem~\ref{orethm}}\label{sec:triangle}
The proof adapts that of Theorem~4.1 from~\cite{treglown}.
Let $G$ be  as in the statement of the theorem.
Let $G'$ denote the \textit{graph} on $V(G)$ where $xy \in E(G')$
precisely when $xy \in E(G)$ or $yx \in E(G)$. So $d_{G'}(x)+d_{G'}(y) \geq 4n/3-1$ for all  non-adjacent $x \ne y \in V(G')$ by (\ref{conmin}).  Theorem~\ref{orehsz} therefore implies that $G'$ contains a $K_3$-factor and so
$G$ contains a  $\{T_3, C_3\}$-factor. Let $\mathcal M$ denote the  $\{T_3,C_3\}$-factor in $G$ that contains the most copies of $T_3$. 

Suppose for a contradiction that $\mathcal M$ is not a  $T_3$-factor. Then there is a copy $C$ of $C_3$ in $\mathcal M$. Let $V(C)= \{x,y,z\}$ where $xy,yz,zx \in E(C)$. 

Note that $ yx, zy, xz \notin E(G)$ since otherwise there is a copy of $T_3$ in $G$ on  $\{x,y,z\}$,
a contradiction to the choice of $\mathcal M$. By (\ref{conmin}) this implies that
$d ^+_G (y)+ d^-_G(x), \, d ^+_G (z)+ d^-_G(y) , \, d ^+_G (x)+ d^-_G(z) \geq  4n/3-1$.
This implies that 
$d ^+_G (x)+ d^+_G(y) + d^+_G(z) \geq 2n-1$ or $d ^-_G (x)+ d^-_G(y) +d^-_G(z) \geq 2n-1$; without loss of generality, assume the former holds.

As $ yx, zy, xz \notin E(G)$, $G[x,y,z]$ contains precisely three edges.
In particular, there are at least $d ^+_G (x)+ d^+_G(y) + d^+_G(z)-3 \geq 2n-4 >6(|\mathcal M|-1)$ edges in $G$ with startpoint in $V(C)$ and endpoint in $V(G)\setminus V(C)$.
This implies that there is an element $T \in \mathcal M \setminus \{C\}$ that receives at least $7$ edges from $V(C)$ in $G$.

Hence, there is a vertex, say $x$, in $V(C)$ that sends out $3$ edges to $T$. Furthermore, $y$ and $z$ have a common outneighbour in $G$ that lies in $V(T)$.
Together, this implies that $G[V(C) \cup V(T)]$ contains two vertex-disjoint copies of $T_3$.
This yields a  $\{T_3, C_3\}$-factor in $G$ containing more copies of $T_3$ than $\mathcal M$, a contradiction. Thus, the assumption that $\mathcal M$ is not a  $T_3$-factor is false, as desired.
\qed

\begin{remark}
Notice that we can replace (\ref{conmin}) in Theorem~\ref{orethm} with any of the following conditions:
(i) $d ^+_G (x)+ d^+_G(y) \geq  4n/3-1$; (ii) $d ^-_G (x)+ d^-_G(y) \geq  4n/3-1$; (iii) 
$d ^-_G (x)+ d^+_G(y) \geq  4n/3-1$. Indeed, in each case the proof proceeds analogously.
\end{remark}

\section{Proof of Theorem~\ref{orethm2}}\label{sec:ore2}

\subsection{Finding an almost spanning $T_r$-tiling}

Given a digraph $G$ and $z \in V(G)$, the \textit{dominant degree $d^*_G(z)$ of $z$} is defined as
$d^*_G(z):=\max \{ d^+_G(z), d^-_G(z)\}$.  
Given a set $X \subseteq V(G)$, we define
$d^*_G(z,X):=\max \{ |N^+_G(z) \cap X|, |N^-_G(z)\cap X|\}$.

The following theorem will ensure a digraph $G$ as in Theorem~\ref{orethm2} contains a $T_r$-tiling covering most of the vertices of $G$.

\begin{theorem} \label{orethm3}
Let $r \geq 2$ be an integer and let $\gamma >0$. There exists $n_0=n_0(r,\gamma) \in \mathbb N$ such that the following holds for all $n \geq n_0$.
 Suppose that $G$ is an $n$-vertex digraph  so that for any $x \ne y\in V(G)$ at least one of the following conditions holds:
\begin{itemize}
    \item[(i)] $xy, yx \in E(G)$;
    \item[(ii)] $d^*_G(z) \geq (1-1/r+\gamma)n$ for some $z \in \{x,y \}$.
\end{itemize}
Then $G$ contains a $T_r$-tiling covering all but at most $\gamma n$ vertices of $V(G)$.
\end{theorem}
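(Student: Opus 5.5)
We plan to apply the diregularity lemma and find an almost perfect fractional (or integral) $T_r$-tiling in the special reduced digraph $R^*$, then convert it into a $T_r$-tiling of $G$ with Lemma~\ref{newlemma}. Fix $1/n_0 \ll 1/L \ll 1/\ell_0 \ll \eps \ll d \ll \gamma, 1/r$ and apply Lemma~\ref{lem:reg} to $G$ with parameters $\eps^2, d, \ell_0$ (so that pairs are $\eps^2$-regular, as Lemma~\ref{newlemma} requires). We obtain clusters $V_1,\dots,V_\ell$ of size $m$, the pure digraph $G'$, the reduced digraph $R$ and the special reduced digraph $R^*$.

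The first step is to transfer the hypothesis to $R^*$. Call a vertex $z$ \emph{big} if $d^*_G(z)\ge (1-1/r+\gamma)n$. The set $S$ of non-big vertices spans a complete digraph in $G$, by condition (i). We call a cluster big if most of its vertices are big. Non-big clusters lie almost entirely inside $S$. We may refine the partition so that the part of each cluster inside $S$ is handled separately: we split each cluster into its $S$-part and its non-$S$-part, or equivalently we apply the regularity lemma to a prepartition $\{S, V(G)\setminus S\}$. Then every cluster is either contained in $S$ or consists of big vertices.

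A cluster inside $S$ induces a complete digraph, so it receives a loop in $R^*$. Two such clusters are joined by double edges of density close to $1$ in $G'$, hence by double edges in $R$. A big cluster $V_i$ has, after discarding a few vertices and using averaging, dominant degree at least $(1-1/r+\gamma/2)|R|$ in $R$, in a fixed direction (out or in), since a majority of its vertices share a direction. Thus $R^*$ satisfies a reduced analogue of the hypothesis: any two clusters are doubly adjacent, or one of them has large dominant degree; and the low-degree clusters all carry loops and form a complete digraph.

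The main step, which I expect to be the obstacle, is to show that such an $R^*$ has a perfect fractional tiling by ``blown-up'' $T_r$'s. A looped vertex may host several vertices of the $T_r$, since $G[V_i]$ is complete and every $T_r$ embeds there. I would argue via LP duality or a greedy/switching argument, as in the Hajnal--Szemer\'edi-type proofs for dominant degree in~\cite{treglown}. Let $B$ be the set of big clusters and $S^*$ the looped clique.

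If $|B|$ is small, then $S^*$ alone almost tiles, since a looped complete digraph contains a $T_r$-factor in its blow-up. Otherwise, we build a transitive tournament greedily in $B$. A vertex with dominant out-degree at least $(1-1/r+\gamma/2)\ell$ lets us extend: any $r$ big clusters have a common dominant-direction neighbourhood of size at least $\gamma\ell/2$. Ordering the vertices by their dominant direction produces a transitive tournament, because vertices with out-dominance are placed as sources and vertices with in-dominance as sinks. Mixed tiles can use the $S^*$ clique for the remaining roles.

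To make this almost-perfect rather than merely large, I would take a maximum fractional $T_r$-tiling and show, by the standard augmenting argument, that uncovered weight above $\gamma\ell/4$ yields an improvement. This relies on the $\gamma$ slack in the dominant degrees. Finally, we split clusters according to the fractional weights (rounded, losing $\eps$ fractions). Each weighted copy of $T_r$ in $R^*$, including copies using loops realised inside a single complete cluster, corresponds to $r$ subclusters pairwise $\eps$-regular of density at least $d-\eps$, or complete. Lemma~\ref{newlemma} then covers all but an $\eps$ fraction of each. The total loss is at most $|V_0|$ plus $O(\eps n)$ plus the uncovered weight, which is at most $\gamma n$.
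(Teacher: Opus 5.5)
Your reduction to $R^*$ is sound and is the same as the paper's. After the diregularity lemma, any two clusters are doubly adjacent in $R$ or one of them has dominant degree at least $(1-1/r+\gamma/2)|R|$, and every loopless cluster has large dominant degree. Loops are realised inside complete clusters, and Lemma~\ref{newlemma} converts a tiling of (a blow-up of) $R^*$ back to $G$. The prepartition by $S$ is not needed: a single big vertex $x\in V_i$ already gives $d^*_R(V_i)\ge(1-1/r+\gamma/2)|R|$, because any $G'$-edge from $x$ into $V_j$ forces $(V_i,V_j)$ to have density at least $d$.

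\textbf{The gap.} It is exactly the step you flag as the obstacle. What you actually establish is only that $R^*$ contains \emph{some} copy of $T_r$, via the source/sink ordering. Repeating this greedily stalls once the removed tiles absorb the dominant neighbourhoods of the remaining big clusters. The ``standard augmenting argument'' that is meant to push a maximum fractional tiling down to uncovered weight at most $\gamma\ell/4$ is never given: you do not say what an improving move is, nor why one must exist. This step is the entire combinatorial content of the theorem, since the regularity and conversion steps are routine. So the proposal does not yet prove the statement.

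\textbf{The missing idea.} It is the paper's Lemma~\ref{orelemma}, a Koml\'os-type augmentation applied to \emph{integral} tilings of blow-ups.
\begin{enumerate}
\item Let $\mathcal M$ be a maximum $T_r$-tiling of a looped digraph satisfying (i) and (ii), and suppose it misses at least $\gamma n$ vertices. The uncovered part $G_1$ is $T_r$-free. Since non-big vertices are pairwise doubly adjacent, $G_1$ contains at most $r-1$ of them.
\item By Fact~\ref{simplefact}, at least $\gamma^2 n$ vertices $z$ of $G_1$ have $d^*_{G_1}(z)<(1-1/r+\gamma)|G_1|$. All but $r-1$ of these are big in $G$, so $d^*_G(z,V(\mathcal M))\ge(1-1/r+\gamma)|V(\mathcal M)|$.
\item A simple count then shows that such a $z$ sends all its edges to (or receives all edges from) at least $\gamma|V(\mathcal M)|$ tiles of $\mathcal M$, and it forms a $T_{r+1}$ with each of them.
\item Greedily, this yields a $\{T_r,T_{r+1}\}$-tiling covering $\gamma^2 n/3$ more vertices. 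By Fact~\ref{fact2}, the $r$-blow-up then has a $T_r$-tiling covering a proportion larger by $\gamma^2/12$.
\item Because loopless clusters have large dominant degree, the hypotheses pass to every blow-up $R^*(t)$. The step can therefore be iterated $\lceil 12/\gamma^2\rceil$ times inside $R^*(r^s)$.
\end{enumerate}

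Working with maximum integral tilings of blow-ups is what makes ``the uncovered part is $T_r$-free'' meaningful. In your fractional formulation this would first have to be made precise, for example via thresholds on leftover weight. The paper's route also gives you a fractional tiling of $R^*$ with common denominator $r^s$, which depends only on $r$ and $\gamma$. Your rounding step, currently vague, then becomes routine: split each cluster into $r^s$ equal parts, with $\eps$ chosen small relative to $r^s$ and $d$.
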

Notice that a digraph $G$ as in Theorem~\ref{orethm2} satisfies the hypothesis of Theorem~\ref{orethm3} with $\eta$ playing the role of $\gamma$. In fact, the hypothesis of Theorem~\ref{orethm3}
is significantly more relaxed than that of Theorem~\ref{orethm2}.
On the other hand, one cannot strengthen the conclusion of 
Theorem~\ref{orethm3} to ensure a $T_r$-factor. For example, consider the disjoint union $G$ of two complete digraphs $A,B$, so that $|A|$ and $|B|$ are not divisible by $r$. Then $G$ does not contain a $T_r$-factor even though one can choose the sizes of $A$ and $B$ so that (i) and (ii) from Theorem~\ref{orethm3} hold.


We will need the following  simple facts.
\begin{fact}\label{simplefact}
    Let $n, r \in \mathbb N$ such that $n \geq r \geq 2$. Let $G$ be an $n$-vertex digraph
    so that for each $z \in V(G)$
    \begin{align*}
     d^* _G(z) > (1-1/(r-1))n. 
\end{align*}
Then $G$ contains a copy of $T_r$. \qed
\end{fact}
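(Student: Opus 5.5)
We argue by induction on $r$. For $r=2$ the hypothesis reads $d^*_G(z)>0$ for every $z$, so some vertex has an out- or in-neighbour; any edge is a copy of $T_2$ and we are done. (In general we need $n\ge r$ to have room for the copy; this is assumed in the statement.)

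For the induction step, let $r\ge 3$ and assume the fact holds for $r-1$. The plan is to pick a vertex $v$ and pass to the part of the graph it dominates. Let $v$ be any vertex and, by symmetry (reverse all edges if necessary, which preserves $d^*$ and maps $T_r$ to a copy of $T_r$), assume $d^+_G(v)=d^*_G(v)>(1-1/(r-1))n$. Put $N:=N^+_G(v)$ and $n':=|N|$. For each $z\in N$ we have $d^*_G(z,N)\ge d^*_G(z)-(n-n')$. It therefore suffices to verify
\[
d^*_G(z)-(n-n') > \Bigl(1-\frac{1}{r-2}\Bigr)n'.
\]
Since $d^*_G(z)>(1-1/(r-1))n$, this reduces to showing
\[
n'-\frac{n}{r-1} \ge n'-\frac{n'}{r-2},
\]
that is, $n'/(r-2)\ge n/(r-1)$. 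This in turn follows from $n'>(r-2)n/(r-1)$, which is exactly our bound on $d^+_G(v)$. Hence $G[N]$ satisfies the hypothesis of the fact with $r-1$ in place of $r$. We also need $n'\ge r-1$: for $r\ge 3$ we have $n'>(r-2)n/(r-1)\ge (r-2)r/(r-1)>r-2$, so $n'\ge r-1$.

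By induction, $G[N]$ contains a copy of $T_{r-1}$. Adding $v$, which sends edges to every vertex of $N$, gives a copy of $T_r$ with $v$ as the source. In the reversed case, $v$ receives edges from all of $N$ and becomes the sink; this is again a copy of $T_r$.

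The only delicate point is the degree inequality that shows the restriction to $N$ inherits the hypothesis with parameter $r-1$. It is routine but must be done with strict inequalities, and with the case $r-2=1$ (where the threshold is $0$) handled consistently with the base case.
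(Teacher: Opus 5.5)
Your proof is correct. The inherited bound $d^*_{G[N]}(z) > n' - n/(r-1) \ge (1-1/(r-2))n'$ holds, the check $n' \ge r-1$ holds, and reversing all edges is a legitimate symmetry. The paper states this fact without proof; your induction on $r$ (pass to the dominant neighbourhood of a vertex and recurse) is the recursive form of the greedy nested-neighbourhood argument the paper gives for Fact~\ref{simplefact2}, so it is essentially the intended approach.
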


\begin{fact}\label{simplefact2}
    Let $n, r \in \mathbb N$ such that $n \geq r^2 $ and $r \geq 2$. Let $G$ be an $n$-vertex digraph
    so that for any $x \ne y\in V(G)$ such that $xy \notin E(G)$
    we have that
    \begin{align}\label{new1}
     d^+ _G(x)+d^-_G(y) > 2(1-1/r)n. 
\end{align}
Then for every $z \in V(G)$, $z$ lies in a copy of $T_r$ in $G$. 
\end{fact}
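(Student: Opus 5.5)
The plan is induction on $r$, placing $z$ as the source (or sink) of the copy of $T_r$ and finding the remaining $T_{r-1}$ inside $N^+_G(z)$ (or $N^-_G(z)$). For $r=2$ the claim is that $z$ is not isolated. If it were, then $zy\notin E(G)$ for every $y\neq z$, so (\ref{new1}) would give $d^-_G(y)>n$, which is impossible.

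\textbf{Inductive step.} Suppose $d^+_G(z)>(1-1/r)n$. Set $N:=N^+_G(z)$ and $m:=|N|$. For $x\neq y\in N$ with $xy\notin E(G)$, at most $n-m$ of the out-neighbours of $x$ and at most $n-m$ of the in-neighbours of $y$ lie outside $N$. Hence
\[
d^+_{G[N]}(x)+d^-_{G[N]}(y)\geq d^+_G(x)+d^-_G(y)-2(n-m) > 2m-2n/r .
\]
Since $m>(1-1/r)n$ we have $m/(r-1)>n/r$, and therefore $2m-2n/r>2(1-1/(r-1))m$. So $G[N]$ satisfies (\ref{new1}) with $r-1$ in place of $r$. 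Moreover $m>r^2-r\geq (r-1)^2$, so the size hypothesis also transfers. By induction, any vertex of $N$ lies in a copy of $T_{r-1}$ in $G[N]$, and adding $z$ as a source yields a copy of $T_r$ containing $z$. The case $d^-_G(z)>(1-1/r)n$ is symmetric, with $z$ added as a sink.

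\textbf{Remaining case.} It remains to treat $d^+_G(z),d^-_G(z)\leq (1-1/r)n$, and I expect this to be the main obstacle. Every $y\notin N^+_G(z)\cup\{z\}$ then has $d^-_G(y)>2(1-1/r)n-d^+_G(z)\geq (1-1/r)n$. Likewise every $x\notin N^-_G(z)\cup\{z\}$ has $d^+_G(x)>(1-1/r)n$. So the vertices not dominated by $z$ are themselves of large degree.

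My first attempt is to exploit this asymmetry by applying a weighted version of the neighbourhood argument above. Consider $N:=N^+_G(z)$ and a non-edge $xy$ inside $N$. If $y\notin N^+_G(z)$ were possible we would gain, but inside $N$ we only have (\ref{new1}). So I would instead set up a stronger induction hypothesis. I would prove the statement "$z$ lies in a copy of $T_r$ whenever (\ref{new1}) holds for all non-edges $xy$ with $x,y$ in a set containing $z$", allowing the degree threshold on $z$'s neighbourhood to be traded against the large degrees of the vertices $z$ misses.

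If that fails, I would instead show that this case cannot occur. I would use $d^+_G(z)+d^-_G(y)>2(1-1/r)n$ for non-out-neighbours $y$ of $z$, together with $d^+_G(x)+d^-_G(z)>2(1-1/r)n$ for non-in-neighbours $x$, and count edges between these sets and $N^\pm_G(z)$. The aim is to locate an out-neighbour $w$ of $z$ such that $N^+_G(z)\cap N^+_G(w)$ is large, and then recurse on $G[N^+_G(z)]$ with $z,w$ as the first two vertices of the tournament.
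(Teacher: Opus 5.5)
Your base case and your treatment of the case $d^*_G(z)>(1-1/r)n$ are correct. However, there is a genuine gap: the case $d^+_G(z),d^-_G(z)\le(1-1/r)n$ is left open, and this is the substantive case. Your fallback of showing it cannot occur is false. For $r\ge 3$ and $n\ge r^2$, add to a complete digraph on $n-1$ vertices a vertex $z$ joined by double edges to exactly $k$ of them, where $(1-2/r)n+2<k\le(1-1/r)n$. Every non-edge is $zy$ or $yz$ for a non-neighbour $y$, with degree sum $k+n-2>2(1-1/r)n$, yet $d^{\pm}_G(z)=k\le(1-1/r)n$. Nor does your induction extend to this case. Here $m=|N^+_G(z)|$ is only guaranteed to exceed $(1-2/r)n$, and your estimate $2m-2n/r$ inside $G[N^+_G(z)]$ is below $2(1-1/(r-1))m$ unless $m\ge(1-1/r)n$. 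So the sum condition does not pass to the neighbourhood with parameter $r-1$. Likewise it need not pass to $N^+_G(z)\cap N^+_G(w)$ with parameter $r-2$, which your last suggestion would require.

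The missing idea is to stop trying to transfer the sum condition. Instead, use in $G$ itself the consequence of (\ref{new1}) that survives restriction to any subset: every non-edge $xy$ has an endpoint of dominant degree $>(1-1/r)n$. The paper's argument is then greedy.
First, $d^+_G(z)>(1-2/r)n$ always holds: apply (\ref{new1}) to a non-out-neighbour of $z$, or note it is trivial if there is none.
If $N^+_G(z)$ induces a complete digraph, it has at least $r-1$ vertices because $n\ge r^2$, and you are done. This is exactly what happens in the example above.
Otherwise a non-edge inside $N^+_G(z)$ supplies $z_1\in N^+_G(z)$ with $d^{\diamond}_G(z_1)>(1-1/r)n$ for some $\diamond\in\{+,-\}$. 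You then pass to $N^+_G(z)\cap N^{\diamond}_G(z_1)$, which has size $>(1-3/r)n$.
Iterating, after choosing $z,z_1,\dots,z_j$ the common set has size $>(1-(j+2)/r)n\ge (r-j-2)r$. So it is either complete with at least $r-1-j$ vertices, or it contains a non-edge yielding the next vertex $z_{j+1}$.
Each chosen vertex is a source or a sink with respect to all later ones, so the process produces a copy of $T_r$ containing $z$.
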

\begin{proof}
    Given any $z \in V(G)$ notice that $d^+_G (z) > (1-2/r)n$.
    If $N^+_G(z)$ induces a complete subdigraph of $G$ then we obtain our desired copy of $T_r$. Otherwise, there is some $z_1 \in N^+_G(z)$ with $d^*_G (z_1) > (1-1/r)n$; without loss of generality assume that $d^+_G (z_1)=d^*_G (z_1)$. If $r=2$ then $\{z, z_1\}$ induces our desired copy of $T_r$ in $G$. If $r>2$ then 
    $|N^+_G(z) \cap N^+_G(z_1)| >(1-3/r)n\geq 0$.  If
    $N^+_G(z) \cap N^+_G(z_1)$ induces a complete subdigraph of $G$ then we obtain our desired copy of $T_r$. Otherwise, there is some $z_2 \in N^+_G(z) \cap N^+_G(z_1)$ with $d^*_G (z_2) > (1-1/r)n$.
    By repeating this process we obtain our desired copy of $T_r$ containing $z$.
\end{proof}
\begin{remark}
Notice that we can replace (\ref{new1}) in Fact~\ref{simplefact2} with any of the following conditions:
(i) $d ^+_G (x)+ d^+_G(y) > 2(1-1/r)n$; (ii) $d ^-_G (x)+ d^-_G(y) > 2(1-1/r)n$; (iii) 
$d ^-_G (x)+ d^+_G(y) > 2(1-1/r)n$.
\end{remark}

\begin{fact}\label{fact2} Suppose that $r ,t\in \mathbb N$ such that $r$ divides $t$. Then both $T_r (t)$ and $T_{r+1} (t)$ contain  $T_r$-factors.\qed
\end{fact}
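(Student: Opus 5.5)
The plan is to prove both parts by an explicit construction. Write $U_1,\dots,U_s$ for the vertex classes of $T_s(t)$ (with $s\in\{r,r+1\}$), where $U_i$ is the set of $t$ copies of the $i$-th vertex $v_i$ of $T_s$. The observation driving both parts is that if $X$ contains at most one vertex from each class, then $G[X]$ is isomorphic to the subtournament of $T_s$ induced by the corresponding vertices of $T_s$. Every subtournament of a transitive tournament is transitive. Hence any choice of exactly one vertex from each of $r$ distinct classes spans a copy of $T_r$.

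\textbf{$T_r(t)$.} For each $j\in[t]$, take the set $\{v_1^j,\dots,v_r^j\}$. By the observation above it induces a copy of $T_r$. These $t$ sets are pairwise disjoint and together cover all $rt$ vertices, so they form a $T_r$-factor. Divisibility is not needed for this part.

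\textbf{$T_{r+1}(t)$.} There are $r+1$ classes, each of size $t$, and a factor must consist of $(r+1)t/r$ copies of $T_r$, each meeting exactly $r$ classes. I would balance the copies by which class they omit. For each $i\in[r+1]$, use exactly $t/r$ copies that avoid $U_i$; this is where $r\mid t$ is used. To make this concrete, partition each class $U_j$ into $r$ parts $U_j^{(i)}$ of size $t/r$, one for each $i\in[r+1]\setminus\{j\}$. This is possible because $|U_j|=t=r\cdot (t/r)$.

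For a fixed $i$, consider the $r$ sets $U_j^{(i)}$ with $j\ne i$. Each has size $t/r$, so we can pair them up into $t/r$ disjoint transversals. Each transversal picks one vertex from each class other than $U_i$, so by the observation it spans a copy of $T_r$. Taking these copies over all $i\in[r+1]$ uses every part $U_j^{(i)}$ exactly once. The copies are therefore vertex-disjoint and cover $V(T_{r+1}(t))$, giving the required $T_r$-factor.

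No step here is a real obstacle. The only point needing care is the counting in the second part: each class $U_j$ is omitted by the $t/r$ copies indexed by $j$ and used by the $r\cdot(t/r)=t$ copies indexed by each $i\neq j$, and this matches $|U_j|=t$.
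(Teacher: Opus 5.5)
Your proof is correct and complete: the transversal copies $\{v_1^j,\dots,v_r^j\}$ handle $T_r(t)$. For $T_{r+1}(t)$, splitting each class into $r$ blocks of size $t/r$, so that exactly $t/r$ copies omit each class, is where $r\mid t$ is used. The paper states this fact without proof (it is marked as immediate), and your construction is the routine verification it leaves to the reader.
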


To prove Theorem~\ref{orethm3} we will build up the $T_r$-tiling using a variant of an approach that was first used in~\cite{Komlos}.
The next lemma is a key tool used for this; 
its proof is similar in flavour to that of Lemma~6.4 from~\cite{dshsz}.
\begin{lemma} \label{orelemma}
Let $r \geq 2$ be an integer and let $\gamma >0$. There exists $n_0=n_0(r,\gamma) \in \mathbb N$ such that the following holds for all $n \geq n_0$.
 Suppose that $G$ is an $n$-vertex digraph possibly with loops and  such that for any $x \ne y\in V(G)$ at least one of the following conditions holds:
\begin{itemize}
    \item[(i)] $xy, yx \in E(G)$;
    \item[(ii)] $d^*_G(z) \geq (1-1/r+\gamma)n$ for some $z \in \{x,y \}$.
\end{itemize}
Further, suppose the largest $T_r$-tiling in $G$ covers $n' \leq (1-\gamma)n$ vertices of $V(G)$.
Then $G$ contains a $\{T_r,T_{r+1}\}$-tiling that covers at least $n'+\gamma ^2 n/3$ vertices of $V(G)$.
\end{lemma}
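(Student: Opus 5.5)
We argue as in Koml\'os-type tiling lemmas. Fix a largest $T_r$-tiling $\mathcal M$ in $G$; it covers $n'$ vertices. Let $U:=V(G)\setminus V(\mathcal M)$, so $|U|\ge \gamma n$ and, by maximality, $G[U]$ contains no copy of $T_r$. The aim is to find at least $\gamma^2 n/3$ vertices $u\in U$, each paired with a distinct tile $T_u\in\mathcal M$ such that $G[V(T_u)\cup\{u\}]$ contains a copy of $T_{r+1}$. Replacing each such $T_u$ by this $T_{r+1}$, and keeping the rest of $\mathcal M$, gives a $\{T_r,T_{r+1}\}$-tiling covering at least $n'+\gamma^2n/3$ vertices. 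The basic observation is this: if $u$ sends all $r$ of its out-edges (or receives all $r$ in-edges) to the vertices of some tile $T\in\mathcal M$, then $u$ is a source (or sink) added to a transitive tournament, so $V(T)\cup\{u\}$ spans a $T_{r+1}$. Loops cause no trouble, since by the paper's convention degrees exclude $z$ itself.

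\textbf{Step 1 (few small vertices in $U$).} Call $z$ \emph{big} if $d^*_G(z)\ge (1-1/r+\gamma)n$. Any two non-big vertices satisfy (i), so the non-big vertices span a complete digraph. Since $G[U]$ is $T_r$-free, fewer than $r$ vertices of $U$ are non-big.

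\textbf{Step 2 (many vertices of $U$ with low degree inside $U$).} Let $B$ be the set of big $u\in U$ with $d^*_G(u,U)\le (1-1/r+\gamma/2)|U|$. I claim $|B|\ge c|U|$ for some $c=c(r)>0$, say $c=1/(2r^2)$. Suppose instead $|B|<c|U|$, and let $U^*:=U\setminus B$. Every vertex of $U^*$, apart from fewer than $r$ non-big ones, satisfies
\[
d^*_G(u,U^*)> (1-1/r+\gamma/2-c)|U| > (1-1/(r-1))|U^*|,
\]
because $1-1/r-c>1-1/(r-1)$. We cannot apply Fact~\ref{simplefact} to $G[U^*]$ directly, because of the few non-big vertices. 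So apply it to $G[U^*\setminus S]$, where $S$ is the set of those fewer than $r$ non-big vertices; removing them costs only $r$ in each degree. This produces a $T_r$ inside $U$, a contradiction. (For $r=2$ the inequality just says some vertex has a neighbour in $U$, which already gives a $T_2$.) Hence $|B|\ge c\gamma n\ge \gamma^2 n/3+r$ when $\gamma$ is small.

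\textbf{Step 3 (each $u\in B$ dominates many tiles).} Take $u\in B$, and say $d^+_G(u)=d^*_G(u)$; the in-case is symmetric. Then
\[
e_G(u,V(\mathcal M))\ge (1-1/r+\gamma)n-(1-1/r+\gamma/2)|U|\ge (1-1/r)|V(\mathcal M)|+\gamma n/2 .
\]
Tiles receiving at most $r-1$ edges from $u$ contribute at most $(r-1)|\mathcal M|=(1-1/r)|V(\mathcal M)|$. Therefore at least $\gamma n/(2r)$ tiles receive all $r$ out-edges from $u$, and each of these together with $u$ spans a $T_{r+1}$.

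\textbf{Step 4 (greedy assignment).} Process $\lceil\gamma^2n/3\rceil$ vertices of $B$ one at a time, giving each an unused tile that it fully dominates. This is possible while fewer than $\gamma n/(2r)$ tiles have been used, which holds because $\gamma^2 n/3<\gamma n/(2r)$ for $\gamma\ll 1/r$. This completes the argument.

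The main obstacle is Step 2: showing that the $T_r$-freeness of $G[U]$ forces a linear-sized set $B$ of big vertices with low dominant degree inside $U$. It requires the gap between $1-1/r$ and $1-1/(r-1)$ in order to absorb both the removed set $B$ and the fewer than $r$ non-big vertices before invoking Fact~\ref{simplefact}.
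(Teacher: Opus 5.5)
Your argument is correct, and it follows the same Koml\'os-type skeleton as the paper. Both take a largest $T_r$-tiling $\mathcal M$, observe that the uncovered set is $T_r$-free with fewer than $r$ non-big vertices, use Fact~\ref{simplefact} to force many big uncovered vertices with small dominant degree into the uncovered set, show that each such vertex fully dominates many tiles, and finish greedily.

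The one real difference is your threshold. The paper calls $z$ low if $d^*_{G_1}(z)<(1-1/r+\gamma)|G_1|$. That gives only $\gamma n'$ fully dominated tiles per vertex, so the paper must first prove $n'\ge \gamma n/3$, using the largest complete subdigraph and repeated applications of Fact~\ref{simplefact}. Your threshold $(1-1/r+\gamma/2)|U|$ leaves a surplus of $\gamma n/2$ edges into $V(\mathcal M)$ that does not depend on $n'$. This gives $\gamma n/(2r)$ dominated tiles directly, so that preliminary step is unnecessary in your version.

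One point needs tidying. You twice assume $\gamma$ is small, but that is not a free reduction here, because the conclusion $n'+\gamma^2 n/3$ gets stronger as $\gamma$ grows. With your fixed $c=1/(2r^2)$, Step~2 only gives $|B|\ge \gamma n/(2r^2)$, which falls below $\gamma^2 n/3$ once $\gamma>3/(2r^2)$. For example, $\gamma\in(3/8,1/2)$ is not covered when $r=2$. The fix is already inside your argument.
\begin{itemize}
\item Step~2 works for any $c<\gamma/2+1/(r(r-1))$, since the slack $1/(r(r-1))$ still absorbs the $r$ deleted non-big vertices. Taking $c:=\gamma/2$ gives $|B|\ge \gamma^2 n/2$ for every $\gamma$.
\item For Step~4, note that the hypotheses are vacuous when $\gamma\ge 1/r$. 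In that case no vertex is big, so every pair satisfies (i), $G$ is complete, and $n'\ge n-r+1>(1-\gamma)n$. Hence $\gamma<1/r$, and $\lceil \gamma^2 n/3\rceil\le \gamma n/(2r)$ holds automatically for large $n$.
\end{itemize}
With these two adjustments your proof covers the full statement.
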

\begin{proof}
We first show that $n' \geq \gamma n/3$.
Let $K$ be  the largest complete digraph in $G$. By
conditions (i) and (ii), every set $X\subseteq V(G)$ of size at least $|K|+1$ contains a vertex
$z \in X$ so that $d^*_G(z) \geq (1-1/r+\gamma)n$. If $|K|\geq \gamma n/2$ then certainly
$G$ contains a $T_r$-tiling covering at least $|K|-r+1\geq \gamma n/3$ vertices.
Otherwise, there is a set $Y \subseteq V(G)$ where $|Y| \leq |K|<\gamma n/2$ so that
$d^*_{G}(z) \geq (1-1/r+\gamma)n$ for every $z \in V(G)\setminus Y$; in particular, 
$d^*_{G\setminus Y}(z) \geq (1-1/r+\gamma/2)n$ for every $z \in V(G)\setminus Y$.
In this case we can repeatedly apply Fact~\ref{simplefact} to obtain a  $T_r$-tiling in $G\setminus Y \subseteq G$ covering at least $\gamma  n/2$ vertices.
In either case, this shows that  $n' \geq \gamma n/3$.

Let $\mathcal M$ be the largest $T_r$-tiling in $G$ and suppose that $n' \leq (1-\gamma)n$.
Note that $G_1:=G \setminus V(\mathcal M)$ does not contain a copy of $T_r$ by the maximality of $\mathcal M$. Thus, (i) and (ii) imply that all but at most $r-1$ vertices $z \in V(G_1)$ satisfy
$d^*_G(z) \geq (1-1/r+\gamma)n$.

We claim that there are at least $\gamma ^2 n$ vertices $z \in V(G_1)$ that satisfy
$d^*_{G_1} (z) < (1-1/r+\gamma)|G_1|$. Suppose for a contradiction that this is not the case. Then
by deleting at most $\gamma ^2n \leq \gamma |G_1|$ vertices from $G_1$ we obtain an induced subdigraph $G_2$ of $G_1$
so that $d^*_{G_2} (z) > (1-1/r)|G_2|$ for every $z \in V(G_2)$; so Fact~\ref{simplefact} implies that $G_2 \subseteq G_1$ contains a copy of $T_r$, a contradiction.

The last two paragraphs imply that there are at least $\gamma ^2 n- (r-1)\geq \gamma ^2 n/3$ vertices
$z \in V(G_1)$ such that $d^*_G(z, V(\mathcal M)) \geq (1-1/r+\gamma)n- (1-1/r+\gamma)|G_1|
= (1-1/r+\gamma) |V(\mathcal M)|$.
This condition implies that for each such $z$, there are at least $\gamma |V(\mathcal M)|= \gamma n' \geq \gamma ^2 n/3$ copies $T$ of $T_r$ in $\mathcal M$ such that (a) $z$ sends out all possible edges to  $T$ or (b) $z$ receives all possible edges from  $T$; in particular, $z$ forms a copy of $T_{r+1}$ with each such $T$.

One can now greedily assign at least  $\gamma ^2 n/3$ such vertices $z$  to distinct copies of $T_r$ in $\mathcal M$ to obtain a 
$\{T_r,T_{r+1}\}$-tiling that covers at least $n'+\gamma ^2 n/3$ vertices, as desired.
\end{proof}

We will now combine Lemma~\ref{orelemma} with the diregularity lemma to prove Theorem~\ref{orethm3}.
The main novelty in our proof is that we need to consider the special reduced digraph $R^*$ of $G$, rather than the reduced digraph $R$.
For this, we will need to extend the notion of a $t$-blow-up of a digraph to digraphs with loops.
Indeed, 
given a  digraph $R^*$ with loops, the digraph (with loops) $R^*(t)$  is defined as follows:
 $V(R^*(t)):= \{v^{j} \colon v \in V(R^*) \text{ and } j\in [t]\}$ and $E(R^*(t)) := \{v^a w^{b} :  v w \in E(R^*) \text{ and }a,b\in [t]\}.$
 Thus, if there is a loop at $v \in V(R^*)$, then for every $a,b\in [t]$, $R^*(t)$ contains the edge $v^a v^{b}$ and in particular, there is  a loop at $v^a$ in $R^*(t)$.

\begin{proof}[Proof of Theorem~\ref{orethm3}]
    Define additional constants $\eps, d$ and $n_0, \ell_0 \in \mathbb N$ so that 
\begin{align}\label{orehier}
 0<\frac{1}{n_0}\ll \frac{1}{\ell_0} \ll \eps \ll d \ll \gamma , \frac{1}{r}. 
    \end{align}
    Note that it suffices to prove the theorem under the assumption that $\gamma <1/4$.
    Set $s:= \lceil 12/\gamma^2 \rceil $. Apply Lemma~\ref{lem:reg} with parameters $\eps, d$ and $\ell_0$ to $G$ 
to obtain clusters $V_1, \dots , V_k$, an exceptional set $V_0$ and a pure digraph $G'$.
Set $m:=|V_1|=\dots =|V_k|$.
 Let $R$ and $R^*$ respectively denote the reduced digraph and special reduced digraph of $G$ with parameters $\eps, d$ and $\ell_0$. 
 
Note that we may assume that for any
distinct $i,j \in [k]$, if $(V_i,V_j)$ is of density $1$ in $G$
then it is of density $1$ in $G'$. 

For any distinct $i,j \in [k]$, suppose that $V_i V_j \notin E(R)$ or $V_j V_i \notin E(R)$. Then by the remark in the last paragraph,  there exist some $x \in V_i$ and $y \in V_j$ such that $xy \notin E(G)$ or $yx \notin E(G)$. Thus, for some $z \in \{x,y\}$,
$d^*_G(z) \geq (1-1/r+\gamma)n$ by (ii) and so
$d^*_{G'}(z) \geq (1-1/r+3\gamma/4)n$ by (\ref{orehier}) and 
Lemma~\ref{lem:reg}(c). Without loss of generality, assume that $z=x$. Note that
$d^*_{G'}(x, V_1\cup \dots \cup V_k) \geq (1-1/r+\gamma/2)n$ by 
Lemma~\ref{lem:reg}(a). This together with Lemma~\ref{lem:reg}(e)
implies that $d^*_R (V_i) \geq (1-1/r+\gamma/2)n/m \geq (1-1/r+\gamma/2)k$.

In summary, for any distinct $i,j \in [k]$ at least one of the following conditions holds:
\begin{itemize}
    \item[(i$'$)] $V_iV_j, V_jV_i \in E(R)$;
    \item[(ii$'$)] $d^*_R(V_i) \geq (1-1/r+\gamma/2)k$ or $d^*_R(V_j) \geq (1-1/r+\gamma/2)k$.
\end{itemize}

Furthermore, note that if there is no loop at some $V_i \in V(R^*)$ in $R^*$, then there exists a $z \in V_i$ so that $d^*_G(z) \geq (1-1/r+\gamma)n$. Arguing as before, we conclude that
\begin{itemize}
    \item[(iii$'$)] $d^*_{R^*} (V_i) \geq (1-1/r+\gamma/2)k$ for every $V_i \in V(R^*)$ that does not have a loop in $R^*$.
\end{itemize}
Let $t \in \mathbb N$. Crucially, (i$'$)--(iii$'$)
ensure that for any distinct $X,Y \in V(R^*(t))$,
\begin{itemize}
    \item[(i$''$)] $XY, YX \in E(R^*(t))$, or
    \item[(ii$''$)] $d^*_{R^*(t)}(Z) \geq (1-1/r+\gamma/2)kt= (1-1/r+\gamma/2)|R^*(t)|$ for some $Z \in \{X,Y \}$.
\end{itemize}

\begin{claim}\label{blowclaim}
 $R^*_s:=R^*(r^s)$ contains a $T_r$-tiling covering at least $(1-\gamma /2)kr^s=(1-\gamma /2)|R^*_s|$ vertices. 
\end{claim}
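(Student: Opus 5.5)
We will prove Claim~\ref{blowclaim} by iterating Lemma~\ref{orelemma} $s$ times, using blow-ups to turn each $\{T_r,T_{r+1}\}$-tiling into a genuine $T_r$-tiling via Fact~\ref{fact2}. For $j=0,1,\dots,s$ let $f_j$ denote the proportion of vertices of $R^*(r^j)$ covered by a largest $T_r$-tiling in $R^*(r^j)$. The goal is to show that $f_{j+1}\geq f_j+\gamma^2/12$ whenever $f_j\le 1-\gamma/2$.

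\emph{Hypotheses of Lemma~\ref{orelemma}.} By (i$''$) and (ii$''$), each $R^*(t)$ is a digraph with loops satisfying conditions (i) and (ii) of Lemma~\ref{orelemma}, with $\gamma/2$ in place of $\gamma$. Here $|R^*(t)|=kt\geq \ell_0$ is large enough by (\ref{orehier}). Note also that loops are only used in the blow-up: a loop at $V_i$ makes $\{V_i^a\}_{a}$ a complete digraph in $R^*(t)$. Fact~\ref{simplefact} and the $T_r$-tiling arguments treat the loopless part, and the convention that $d^*$ excludes $z$ itself ensures the degree computations are unchanged.

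\emph{One iteration step.} Fix $j<s$ and suppose a largest $T_r$-tiling in $R^*(r^j)$ covers $n'\leq(1-\gamma/2)kr^j$ vertices. Lemma~\ref{orelemma} (with $\gamma/2$) then gives a $\{T_r,T_{r+1}\}$-tiling $\mathcal T$ of $R^*(r^j)$ covering at least $n'+(\gamma/2)^2kr^j/3$ vertices. Pass to $R^*(r^{j+1})=R^*(r^j)(r)$, in which every vertex of $R^*(r^j)$ is replaced by $r$ copies and every edge by a complete bipartite digraph. Each copy of $T_r$ or $T_{r+1}$ in $\mathcal T$ becomes a copy of $T_r(r)$ or $T_{r+1}(r)$ on the corresponding blown-up vertices. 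By Fact~\ref{fact2}, each of these contains a $T_r$-factor. Hence $R^*(r^{j+1})$ has a $T_r$-tiling covering at least a $(n'/(kr^j)+\gamma^2/12)$ proportion of its vertices. Since a $T_r$-tiling of $R^*(r^j)$ also blows up to one of $R^*(r^{j+1})$ with the same proportion, the sequence $f_j$ is non-decreasing.

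\emph{Conclusion.} Because $f_j$ is non-decreasing, and increases by at least $\gamma^2/12$ whenever it is at most $1-\gamma/2$, after $s=\lceil 12/\gamma^2\rceil$ steps we obtain $f_s>1-\gamma/2$. Otherwise $f_s\geq s\gamma^2/12\geq 1$, which is a contradiction. This gives the claim with $t=r^s$.

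The main obstacle is checking that Lemma~\ref{orelemma} genuinely applies to $R^*(r^j)$ with loops. In particular, (i$''$) must hold for two copies $V_i^a,V_i^b$ of the same cluster: either $V_i$ has a loop, making the pair a double edge, or (iii$'$) gives the degree condition (ii). We must also confirm that the parameter $n_0$ of Lemma~\ref{orelemma} with $\gamma/2$ is at most $k$, which follows from choosing $\ell_0$ large.
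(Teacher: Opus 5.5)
Your proposal is correct and is essentially the paper's argument. You apply Lemma~\ref{orelemma} with $\gamma/2$ to $R^*(r^j)$, justified by (i$''$) and (ii$''$), to gain a $\gamma^2/12$ proportion of covered vertices, then pass to $R^*(r^{j+1})$ via Fact~\ref{fact2}, for at most $s=\lceil 12/\gamma^2\rceil$ rounds. Your explicit monotonicity of $f_j$ simply makes precise the paper's ``repeating this argument at most $s$ times'' and its use of Fact~\ref{fact2} to carry an already-good tiling up to $R^*(r^s)$.
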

\begin{proofclaim}
If $R^*$ contains a $T_r$-tiling covering at least $(1-\gamma/2)k$ vertices then Fact~\ref{fact2} implies that Claim~\ref{blowclaim} holds. Thus, suppose that the largest $T_r$-tiling in $R^*$ covers precisely
$d \leq (1- \gamma /2)k$ vertices. Then by (i$'$) and (ii$'$) we may apply Lemma~\ref{orelemma} to $R^*$ to conclude that $R^*$ contains a $\{T_r, T_{r+1} \}$-tiling that covers at least $d+ \gamma ^2 k/12$ vertices. Thus, by Fact~\ref{fact2}, $R^*(r)$ contains a $T_r$-tiling
covering at least $(d+\gamma ^2 k/12)r$ vertices. (So at least a $\gamma ^2/12$-proportion of the vertices in $R^*(r)$ are covered.) 

If $R^*(r)$ contains a $T_r$-tiling covering at least $(1-\gamma /2)kr$ vertices then again Fact~\ref{fact2} implies that the claim holds.
So suppose that the largest $T_r$-tiling in $R^*(r)$ covers precisely
$d' \leq (1- \gamma /2)kr$ vertices. Recall that $d' \geq (d+\gamma ^2 k/12)r$. 
Then by (i$''$) and (ii$''$) we may apply Lemma~\ref{orelemma} to $R^*(r)$ to conclude that $R^*(r)$ contains a
 $\{T_r, T_{r+1} \}$-tiling that covers at least $d' +\gamma ^2 kr/12 \geq (d+\gamma ^2 k/6)r$ vertices.
Thus, by Fact~\ref{fact2}, $R^*(r^2)$ contains a $T_r$-tiling
covering at least $(d+\gamma ^2 k/6)r^2$ vertices. (So at least a $\gamma^2 /6$-proportion of the vertices in $R^*(r^2)$ are covered.) Repeating this argument at most $s$ times we see that the claim holds.\qedclaim
\end{proofclaim}

\medskip

We now use Claim~\ref{blowclaim} to prove the theorem.
For each $ i \in [ k]$, partition $V_i$ into classes $V^0 _i, V_{i,1}, \dots , V_{i, r^s}$ where $m':=|V_{i,j}|= \lfloor m/r^s \rfloor \geq m/(2r^s)$ for all $ j \in [r^s]$.
Since $mk \geq (1-\eps)n$ by Lemma~\ref{lem:reg}(a),
\begin{align}\label{m'bound}
m'|R^*_s| = \big \lfloor {m}/{r^s} \big \rfloor kr^s \geq mk-kr^s \geq (1-2\eps)n.
\end{align}

Proposition~\ref{prop:regsubset} implies that, for any distinct $i_1,i_2 \in [k]$, if $(V_{i_1}, V_{i_2})_{G'}$ is $\eps$-regular with density at least $d$ then $(V_{i_1, j_1}, V_{i_2,j_2})_{G'}$ is $2\eps r^s$-regular with density at least $d-\eps \geq d/2$
for all $ j_1,j_2 \in [r^s]$. 
Moreover, for any $i \in [k]$ and any
distinct $ j_1,j_2 \in [r^s]$,
if there is a loop at $V_i$ in  $V(R^*)$,
then there are all possible edges between $V_{i, j_1}$ and $V_{i,j_2}$ in  $G$ (and so certainly
$(V_{i, j_1}, V_{i,j_2})_{G}$ is $2\eps r^s$-regular with density at least $ d/2$).

We can therefore label the vertex set of $R^*_s$ so that $V(R^*_s)=\{V_{i,j} :  i \in [k] , \ j \in [r^s] \}$ where 
$V_{i_1, j_1} V_{i_2,j_2} \in E(R^*_s)$ implies that either (a) $(V_{i_1, j_1}, V_{i_2,j_2})_{G'}$ is $2\eps r^s$-regular with density at least $d/2$ or (b) there are all possible edges between $V_{i_1, j_1}$ and $V_{i_2,j_2}$ in  $G$. 

By Claim~\ref{blowclaim}, $R^*_s$ has a $T_r$-tiling $\mathcal M$ that contains at least $(1-\gamma /2)|R^*_s|$ vertices. Consider any copy $T$ of $T_r$ in $\mathcal M$ and let
$V(T)=\{ V_{i_1, j_1}, V_{i_2,j_2}, \dots , V_{i_r, j_r} \}$. Set $V:= V_{i_1, j_1}\cup V_{i_2,j_2}\cup \dots \cup V_{i_r, j_r}$.
Note that $0<1/m' \ll 2 \eps r^s \ll d/2 \ll \gamma , 1/r$.
Lemma~\ref{newlemma} implies that 
$G[V]$ contains a $T _r$-tiling covering all but at most $\sqrt{2 \eps r ^s} m' r \leq \gamma ^2 m' r$ vertices. By considering each copy of $T_r$ in $\mathcal M$
we conclude that
$ G$ contains a $T _r$-tiling covering at least
$$ (1-\gamma ^2)m'r \times (1- \gamma /2)|R^*_s|/r \stackrel{(\ref{m'bound})}{\geq} (1-\gamma^2)(1-\gamma /2) (1-2\eps )n \stackrel{(\ref{orehier})}{\geq} (1-\gamma )n $$
vertices, as desired.

\end{proof}

\subsection{Proof of Theorem~\ref{orethm2}}
We now have all the tools required to prove Theorem~\ref{orethm2}.

\begin{proof}[Proof of Theorem~\ref{orethm2}]
    Define additional constants $\gamma, \gamma _1, \gamma _2, \eta _1>0$ and $n_0 \in \mathbb N$ such that
    \begin{align}\label{orehier1}
        \frac{1}{n_0} \ll \gamma _2 \ll \gamma _1 \ll \gamma \ll \eta _1 \ll \eta , \frac{1}{r}.
    \end{align}
    Note that it suffices to prove the theorem under the additional assumption that $\eta \ll 1/r$.

    Let $G$ be a digraph on $n \geq n_0$ vertices as in the statement of the theorem. For any $x \ne y\in V(G)$, (\ref{oreuseful}) implies that at least one of the following conditions holds:
\begin{itemize}
    \item[(i)] $xy, yx \in E(G)$;
    \item[(ii)] $d^*_G(z) \geq (1-1/r+\eta)n$ for some $z \in \{x,y \}$.
\end{itemize}
Partition $V(G)$ into $S, L$ where $S$ consists precisely of those $z \in V(G)$ such that $d^*_G(z) < (1-1/r+\eta)n$. 
Notice  (i) and (ii) imply that $G[S]$ is a complete digraph.
In particular, $|S|< (1-1/r+\eta)n+1$ and so certainly $|L| \geq 2\eta _1 n$. Our argument now splits into two cases.

\smallskip

{\bf Case 1: $|S|\geq \eta _1n$.}
Our first aim is to apply Lemma~\ref{lolemma} to obtain a $T_r$-absorbing set.

\begin{claim}\label{claimore1}
    Given any distinct $x,y \in V(G)$ so that either $x,y \in S$ or $x,y \in L$,
    there are 
at least $\gamma n^{r-1}$ $(r-1)$-sets $X \subseteq V(G)$
such that $G[X \cup \{x\}]$ and $G[X \cup \{y\}]$
contain spanning copies of $T_r$.
\end{claim}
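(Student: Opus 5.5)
The two cases $x,y\in S$ and $x,y\in L$ need different arguments. In both, I will build the $(r-1)$-set $X$ vertex by vertex, with $\Omega(n)$ choices at each step. That gives at least $c\,n^{r-1}/(r-1)!\ge \gamma n^{r-1}$ sets, since $\gamma\ll\eta_1$.

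\emph{Case $x,y\in S$.} Here $G[S]$ is a complete digraph and $|S|\ge\eta_1 n$. I will take $X$ to be any $(r-1)$-subset of $S\setminus\{x,y\}$. Then $G[X\cup\{x\}]$ and $G[X\cup\{y\}]$ are complete digraphs on $r$ vertices, so each contains a spanning $T_r$. The number of such $X$ is $\binom{|S|-2}{r-1}\ge(\eta_1 n/2)^{r-1}/(r-1)!\ge\gamma n^{r-1}$.

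\emph{Case $x,y\in L$.} Now $d^*_G(x),d^*_G(y)\ge(1-1/r+\eta)n$. Let $N_x\in\{N^+_G(x),N^-_G(x)\}$ realise $d^*_G(x)$, and define $N_y$ similarly. Then $|N_x\cap N_y|\ge(1-2/r+2\eta)n$. If $r=2$ this is at least $2\eta n$, and any $w\in N_x\cap N_y$ gives $X=\{w\}$ with $G[x,w]$ and $G[y,w]$ both containing an edge, hence a $T_2$.

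For $r\ge3$ I will choose $w_1,\dots,w_{r-1}$ inside $N_x\cap N_y$ so that $\{w_1,\dots,w_{r-1}\}$ spans a transitive tournament (possibly with extra edges). Adding $x$ or $y$ then gives a $T_r$, with $x$ (resp.\ $y$) as source or sink, since $x$ is uniformly an out- or in-neighbour of every $w_i$. To find the $w_i$, I will run the iterative argument of Fact~\ref{simplefact2} inside the common neighbourhood, using the dichotomy (i)/(ii). Let $A_j$ be the current common neighbourhood of $x$, $y$ and the dominant sides of $w_1,\dots,w_j$.
\begin{itemize}
\item If $w_j\in L$, it cuts $A_j$ down by at most $(1/r-\eta)n$.
\item If $w_j\in S$, then future vertices in $S$ are adjacent to it in both directions, so it imposes no restriction on them; only vertices in $L$ need to respect it.
\end{itemize}
Concretely, I will pick $w$'s from $L$ as long as possible. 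After choosing $j\le r-3$ vertices of $L$ the common set still has size at least $(1-(j+2)/r+(j+2)\eta)n\ge (2/r)n$. If at some stage $A_j\cap L$ is small, then $A_j$ is mostly in $S$. Since $G[S]$ is complete and $S$-vertices are adjacent both ways to each other, I can finish by picking the remaining vertices from $A_j\cap S$; these are automatically compatible with one another, and they lie in the dominant neighbourhoods of the earlier $L$-vertices.

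The main obstacle is the last vertex $w_{r-1}$ when the count is tight. If $w_1,\dots,w_{r-2}$ are all in $L$, the common set may have size only about $(1/r+r\eta)n$, so the last choice still has $\Omega(\eta n)$ options. I must check that the edges among the chosen $w_i$ are oriented consistently with a transitive order. I will enforce this by always choosing $w_{j+1}$ in the dominant neighbourhood of each earlier $w_i\in L$, which orients $w_iw_{j+1}$ uniformly as in Fact~\ref{simplefact2}. Every stage then has at least $\min\{\eta,\eta_1\}n/2$ choices, which yields the required count.
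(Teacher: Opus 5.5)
Your proof is correct and is essentially the paper's argument. The case $x,y\in S$ is identical. For $x,y\in L$, the paper makes your $S$/$L$ decision once at the start, which removes the need for adaptive switching. If $N_x\cap N_y\cap S$ has at least $\eta_1 n$ vertices, it takes $X$ inside this complete set. Otherwise $L':=N_x\cap N_y\cap L$ has size at least $(1-2/r+\eta)n$, and the greedy transitive construction runs entirely inside $L'$.

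Two of your estimates are off by one step. After $j$ picks from $L$, the guaranteed size $(1-(j+2)/r+(j+2)\eta)n$ is at least $2n/r$ only for $j\le r-4$. For $j=r-2$ it is $r\eta n$, not $(1/r+r\eta)n$. So the threshold for ``$A_j\cap L$ is small'' must be well below $r\eta n$, not of order $n/r$; for example $\eta_1 n$, which your $\min\{\eta,\eta_1\}n/2$ count already implicitly assumes. With that choice every step still has $\Omega(\eta_1 n)$ options and the count goes through.
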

\begin{proofclaim}
Given any distinct $x,y \in S$, since $G[S]$ is  complete, 
every $(r-1)$-set $X \subseteq S\setminus \{x,y\}$ is such that 
$G[X \cup \{x\}]$ and $G[X \cup \{y\}]$
contain spanning copies of $T_r$. There are at least
$(\eta _1 n/2)^{r-1}/(r-1)!  \stackrel{(\ref{orehier1})}{\geq} \gamma n^{r-1}$ such choices for $X$. 

Next consider any distinct $x,y \in L$. 
Then $d^*_G(x),d^*_G(y)  \geq (1-1/r+\eta)n$. Without loss of generality, suppose that $d^+_G(x)=d^*_G(x)$ and $d^-_G(y) =d^*_G(y)$; the other cases are analogous.
Then $|N^+_G (x) \cap N^-_G(y)| \geq (1-2/r +2\eta )n$.
If $|N^+_G (x) \cap N^-_G(y) \cap S| \geq \eta _1 n$, then
as in the last paragraph we conclude that there are 
at least $\gamma n^{r-1}$ $(r-1)$-sets $X \subseteq S$
such that $G[X \cup \{x\}]$ and $G[X \cup \{y\}]$
contain spanning copies of $T_r$.
So assume that $|N^+_G (x) \cap N^-_G(y) \cap S| < \eta _1 n$ and so
$|L'| \geq (1-2/r +\eta )n$ where $L':=N^+_G (x) \cap N^-_G(y) \cap L$. As  $d^*_G(z) \geq (1-1/r+\eta)n$ for all $z \in L'$, it is now straightforward to greedily construct at least 
 $\gamma n^{r-1}$ $(r-1)$-sets $X \subseteq L'$
such that $G[X]$ 
contains a spanning copy of $T_{r-1}$; in particular, for each such $X$, both $G[X \cup \{x\}]$ and $G[X \cup \{y\}]$
contain spanning copies of $T_r$.\qedclaim
\end{proofclaim}

\begin{claim}\label{claimore2}
    Given any $x \in S$, there are at least $\eta _1 n$ vertices
    $y \in L$ such that the following holds:
    there are 
at least $\gamma n^{r-1}$ $(r-1)$-sets $X \subseteq V(G)$
such that $G[X \cup \{x\}]$ and $G[X \cup \{y\}]$
contain spanning copies of $T_r$.
\end{claim}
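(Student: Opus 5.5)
Fix $x \in S$. The plan is to find many $y \in L$ that lie in a common copy of $T_r$ with $x$ in a way that $x$ and $y$ are interchangeable. The natural choice is $y \in L$ adjacent to $x$ by a double edge, which then shares a large common neighbourhood with $x$.

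First I find many double-edge neighbours of $x$ in $L$. Since $x \in S$ we have $d^*_G(x) < (1-1/r+\eta)n$. Take any $y \in L$ with $xy \notin E(G)$. Then (\ref{oreuseful}) gives $d^+_G(x) + d^-_G(y) \geq 2(1-1/r+\eta)n$. Using $d^-_G(y) \le n-1$, this forces $d^+_G(x) \geq (1-2/r+2\eta)n - 1$; the same bound holds for $d^-_G(x)$ by symmetry. Counting non-neighbours is more useful. Let $B^+$ be the set of $y \in L$ with $xy \notin E(G)$. For each $y \in B^+$ we get $d^-_G(y) \geq 2(1-1/r+\eta)n - d^+_G(x) > (1-1/r+\eta)n$. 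But this alone does not bound $|B^+|$, so I will instead argue via $S$.

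Since $G[S]$ is complete, $x$ is doubly joined to all of $S\setminus\{x\}$. The difficulty is getting $y$ in $L$. If $|L|$ is small compared with the failure set, I switch strategy: take $y \in L$ and look for $(r-1)$-sets $X \subseteq S$ that are joined compatibly to $y$. Concretely, pick $y \in L$ with, say, $d^+_G(y) = d^*_G(y) \geq (1-1/r+\eta)n$. Look at $N^+_G(y) \cap S$. If $|N^+_G(y)\cap S| \ge \eta_1 n/2$, then every $(r-1)$-set $X \subseteq N^+_G(y)\cap S \setminus\{x\}$ works. Indeed $G[X\cup\{x\}]$ is complete, and $G[X\cup\{y\}]$ contains $T_r$ with $y$ as source. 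This gives at least $\gamma n^{r-1}$ sets. The in-version is symmetric.

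So it remains to show that at least $\eta_1 n$ vertices $y \in L$ have $d^*$-neighbourhood meeting $S$ in at least $\eta_1 n/2$ vertices. Suppose not. Then most $y \in L$ have both $|N^+_G(y)\cap S|$ and $|N^-_G(y)\cap S|$ below $\eta_1 n/2$. This is the main obstacle. To resolve it, take $s \in S$ and $y \in L$ with $sy$ or $ys$ a non-edge, which holds for almost all such pairs. Apply (\ref{oreuseful}) to the pair $(s,y)$ with $sy \notin E(G)$. This gives $d^+_G(s) + d^-_G(y) \geq 2(1-1/r+\eta)n$. Now $d^-_G(y) \le |L| + \eta_1 n/2$ and $d^+_G(s) < (1-1/r+\eta)n$, so $|L| \geq (1-1/r+\eta)n - \eta_1 n/2$ and hence $|S| \le n/r$.

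This bound $|S|\le n/r$ alone is not contradictory. I then use the edges from $S$ to $L$. Each $s \in S$ has $d^+_G(s) \ge (1-2/r+2\eta)n - 1$ by the earlier bound, so $s$ sends at least $(1-2/r+2\eta)n - |S|$ edges into $L$. Double counting the edges between $S$ and $L$, the total is at least $|S|\,((1-2/r+2\eta)n-|S|)$. On the other side, the low-degree $y$ contribute at most $|L|\cdot\eta_1 n/2$ and the exceptional $y$ contribute at most $\eta_1 n |S|$. The left side is at least $|S|\cdot\eta n$ or so, which beats both upper bounds since $|S| \ge \eta_1 n$ in Case 1 and $\eta_1 \ll \eta$. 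This holds provided $r \ge 3$; for $r=2$ I would instead use directly that $d^+_G(s) \ge 2\eta n - 1$ together with $|S|$ small.

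I expect this final counting step, especially balancing the constants $\eta_1 \ll \eta$ against $|S|\ge \eta_1 n$, to be the delicate part. If it fails, I would fall back on choosing $X$ as a mixture of vertices of $S$ and $L'$, as in the proof of Claim~\ref{claimore1}.
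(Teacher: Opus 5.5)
There is a genuine gap. Your reduction is to the statement that at least $\eta_1 n$ vertices $y\in L$ satisfy $|N^+_G(y)\cap S|\ge\eta_1 n/2$ or $|N^-_G(y)\cap S|\ge\eta_1 n/2$. This statement is false, so the final counting step cannot be completed.

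Here is a counterexample. Take $r\in\{2,3\}$ and $\eta_1\ll\eta\ll 1/r$. Let $G[S]$ and $G[L]$ be complete digraphs with $|S|=\eta_1 n$. Let every $s\in S$ send edges to, and receive edges from, exactly $D:=(1-2/r+2\eta)n+2$ vertices of $L$, spread as evenly as possible.
\begin{itemize}
\item Every $y\in L$ then has only about $\eta_1 D/(1-\eta_1)<\eta_1 n/2$ in-neighbours and out-neighbours in $S$.
\item All non-edges go between $S$ and $L$. For them $d^+_G(s)+d^-_G(y)\ge(\eta_1 n-1+D)+((1-\eta_1)n-1)=2(1-1/r+\eta)n$, and symmetrically when $ys\notin E(G)$. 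So \eqref{oreuseful} holds.
\item $d^\pm_G(s)=(1-2/r+2\eta+\eta_1)n+1<(1-1/r+\eta)n$, while $d^\pm_G(y)\ge(1-\eta_1)n-1$. So $S$ is exactly the set of vertices of small dominant degree, and we are in Case~1.
\end{itemize}
Yet no $y\in L$ has the property you need.

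Your double count shows the same failure. For $r=3$ its left side is only about $3\eta|S|n\approx3\eta\eta_1n^2$ when $|S|\approx\eta_1 n$, while the term $|L|\eta_1 n/2\approx\eta_1n^2/2$ on the right is larger. Both sides are linear in $\eta_1$, so $\eta_1\ll\eta$ does not help; you would need $|S|\gtrsim\eta_1 n/\eta$. For $r=2$ your bound $(1-2/r+2\eta)n-|S|=2\eta n-|S|$ is negative once $|S|>2\eta n$. Nothing forces $|S|$ to be small there; it can be close to $n/2$.

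The idea you dropped at the start is the one that works, and it is the paper's argument. What you need about $B^+=\{y\in L: xy\notin E(G)\}$ is a lower bound, not an upper bound, and it is immediate. Since $d^+_G(x)<(1-1/r+\eta)n$ and $G[S]$ is complete, $x$ has more than $(1/r-\eta)n-1\ge\eta_1 n$ non-out-neighbours, all lying in $L$. For each such $y$, \eqref{oreuseful} applied to the pair $(x,y)$ gives $|N^+_G(x)\cap N^-_G(y)|\ge d^+_G(x)+d^-_G(y)-n\ge(1-2/r+2\eta)n$. You then build $X$ inside this common neighbourhood, so that $x$ is a source and $y$ a sink:
\begin{itemize}
\item if the common neighbourhood meets $S$ in at least $\eta_1 n$ vertices, take $X\subseteq S$;
\item otherwise, greedily build copies of $T_{r-1}$ in $L':=N^+_G(x)\cap N^-_G(y)\cap L$, exactly as in Claim~\ref{claimore1}. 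Here $|L'|\ge(1-2/r+\eta)n$ and every vertex of $L'$ has dominant degree at least $(1-1/r+\eta)n$.
\end{itemize}

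Your threshold route can also be rescued, but only with a much smaller threshold $\beta n$, where $\gamma\ll\beta\ll\eta_1$, and a sharper degree bound. Suppose fewer than $\eta_1 n$ vertices $y\in L$ have $|N^-_G(y)\cap S|\ge\beta n$. Fix one $y_0\in L$ with $|N^-_G(y_0)\cap S|<\beta n$. For each of the more than $|S|-\beta n$ vertices $s\in S$ with $sy_0\notin E(G)$, \eqref{oreuseful} gives $d^+_G(s,L)\ge d^+_G(s)-|S|+1\ge(1-2/r+2\eta-\beta)n$. Hence $e_G(S,L)\ge(|S|-\beta n)(2\eta-\beta)n$. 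This exceeds the upper bound $\beta n^2+\eta_1 n|S|$ because $|S|\ge\eta_1 n$, a contradiction.
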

\begin{proofclaim}
Notice that since $x \in S$, $xy \notin E(G)$ for at least $\eta _1 n$ vertices $y \in L$. Thus, by (\ref{oreuseful}) we have that
$d^+_G(x) +d^-_G(y) \geq 2(1-1/r+\eta)n$. 
This implies that 
$|N^+_G (x) \cap N^-_G(y)| \geq (1-2/r +2\eta )n$.
If $|N^+_G (x) \cap N^-_G(y) \cap S| \geq \eta _1 n$, then
arguing  as in the proof of Claim~\ref{claimore1} we conclude that there are 
at least $\gamma n^{r-1}$ $(r-1)$-sets $X \subseteq S$
such that $G[X \cup \{x\}]$ and $G[X \cup \{y\}]$
contain spanning copies of $T_r$.
Otherwise, $|N^+_G (x) \cap N^-_G(y) \cap S| < \eta _1 n$ and so
$|L'| \geq (1-2/r +\eta )n$ where $L':=N^+_G (x) \cap N^-_G(y) \cap L$. Again arguing as in the proof of Claim~\ref{claimore1}, we conclude that there are 
at least $\gamma n^{r-1}$ $(r-1)$-sets $X \subseteq L'$
such that $G[X \cup \{x\}]$ and $G[X \cup \{y\}]$
contain spanning copies of $T_r$. \qedclaim
\end{proofclaim}

The previous two claims  allow us to prove the following.
\begin{claim}\label{claimore3}
    Given any distinct $x,y \in V(G)$,
    there are 
at least $\gamma _1 n^{2r-1}$ $(2r-1)$-sets $X_1 \subseteq V(G)$
such that $G[X_1 \cup \{x\}]$ and $G[X_1 \cup \{y\}]$
contain $T_r$-factors.
\end{claim}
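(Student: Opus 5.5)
We prove Claim~\ref{claimore3} by a ``chaining'' argument: we connect $x$ and $y$ through an intermediate vertex $w$ and combine two absorbing sets of size $r-1$. Call an ordered pair $(u,v)$ of distinct vertices \emph{good} if there are at least $\gamma n^{r-1}$ $(r-1)$-sets $X$ such that $G[X\cup\{u\}]$ and $G[X\cup\{v\}]$ both contain spanning copies of $T_r$. Suppose there are at least $\gamma' n$ vertices $w$ such that both $(x,w)$ and $(w,y)$ are good. Fix such a $w$. Choose a set $X$ witnessing that $(x,w)$ is good and a set $Y$ witnessing that $(w,y)$ is good, and require $X$, $Y$ and $\{x,y,w\}$ to be pairwise disjoint. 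There are at least $\gamma n^{r-1}\cdot(\gamma n^{r-1}-O(n^{r-2}))\geq \gamma^2 n^{2r-2}/2$ such choices of $(X,Y)$. Set $X_1:=X\cup Y\cup\{w\}$, which has size $2r-1$. Then:
\begin{itemize}
\item $G[X_1\cup\{x\}]$ has the $T_r$-factor consisting of $T_r$ on $X\cup\{x\}$ and $T_r$ on $Y\cup\{w\}$;
\item $G[X_1\cup\{y\}]$ has the $T_r$-factor consisting of $T_r$ on $X\cup\{w\}$ and $T_r$ on $Y\cup\{y\}$.
\end{itemize}
Summing over the $\gamma' n$ choices of $w$ gives at least $\gamma'\gamma^2 n^{2r-1}/2$ triples $(w,X,Y)$. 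A fixed $(2r-1)$-set $X_1$ arises from at most $(2r-1)\binom{2r-2}{r-1}$ triples, so there are at least $\gamma_1 n^{2r-1}$ distinct sets $X_1$, by~(\ref{orehier1}) since $\gamma_1\ll\gamma$.

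It remains to find many such $w$, which we do by cases on where $x$ and $y$ lie. Note that goodness is symmetric.
\begin{itemize}
\item If $x,y\in S$ or $x,y\in L$, Claim~\ref{claimore1} already gives the $(r-1)$-sets directly. To stay uniform with the target count, we still apply the chaining: take $w$ to be any vertex of the same part as $x$ and $y$. Both $|S|\geq\eta_1 n$ (Case 1) and $|L|\geq 2\eta_1 n$ hold, so there are at least $\eta_1 n-2$ choices of $w$, and $(x,w)$ and $(w,y)$ are good by Claim~\ref{claimore1}.
\item If $x\in S$ and $y\in L$, Claim~\ref{claimore2} gives at least $\eta_1 n$ vertices $w\in L$ with $(x,w)$ good. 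For every such $w\neq y$, the pair $(w,y)$ lies within $L$ and is good by Claim~\ref{claimore1}. This leaves at least $\eta_1 n-1$ valid choices of $w$.
\end{itemize}
In all cases we may take $\gamma'=\eta_1/2$.

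The only delicate point is the disjointness bookkeeping: we must remove from the witness count those $(r-1)$-sets meeting $\{x,y,w\}$ or the other chosen set. There are only $O(n^{r-2})$ such sets, which is negligible against $\gamma n^{r-1}$ for $n\geq n_0$. We must also check that the constant hierarchy $\gamma_1\ll\gamma\ll\eta_1$ absorbs the overcounting factor.
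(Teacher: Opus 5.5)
Your proof is correct. In the mixed case $x\in S$, $y\in L$ it is exactly the paper's argument: chain through a vertex $w=z\in L$ using Claim~\ref{claimore2} for $(x,z)$ and Claim~\ref{claimore1} for $(z,y)$, then divide out the overcount.

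The only difference is in the same-part case. The paper takes a single witness $X$ for $(x,y)$ from Claim~\ref{claimore1} and pads it with an arbitrary $r$-set $X'\subseteq S$, which spans a $T_r$ because $G[S]$ is complete; this relies on $|S|\ge\eta_1 n$ even when $x,y\in L$. You instead chain through a third vertex $w$ of the same part, applying Claim~\ref{claimore1} twice. Both routes work: yours handles all cases uniformly, while the paper's needs only one application of Claim~\ref{claimore1} in that case.
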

\begin{proofclaim}
Consider any  $x,y \in V(G)$. First, suppose that either $x,y \in S$ or $x,y \in L$. Claim~\ref{claimore1} implies that there are at least $\gamma n^{r-1}$ $(r-1)$-sets $X \subseteq V(G)$
such that $G[X \cup \{x\}]$ and $G[X \cup \{y\}]$
contain spanning copies of $T_r$.
Fix any such $X$. Select any $r$-set $X' \subseteq S\setminus (X \cup \{x,y\})$; there are at least 
$(\eta _1 n/2)^{r}/r!$ choices for $X'$. 
Set $X_1:=X \cup X'$.
As 
$G[X']$ is a complete digraph, $G[X_1 \cup \{x\}]$ and $G[X_1 \cup \{y\}]$
contain $T_r$-factors.
In total,  there are at least
$$
\gamma n^{r-1} \times \frac{(\eta _1 n/2)^{r}}{r!} \times \frac{1}{(2r-1)!} \stackrel{(\ref{orehier1})}{\geq} \gamma _1 n^{2r-1}
$$
choices for $X_1$, as required.

Finally, consider the case when $x \in S$ and $y \in L$.
By Claim~\ref{claimore2} there are at least $\eta _1 n-1$ vertices $z \in L\setminus \{y\}$ so that the following holds:
there are 
at least $\gamma n^{r-1}- n^{r-2}\geq \gamma n^{r-1}/2$ $(r-1)$-sets $X \subseteq V(G)\setminus \{y\}$
such that $G[X \cup \{x\}]$ and $G[X \cup \{z\}]$
contain spanning copies of $T_r$.
Fix any such choice of $z$ and $X$. 
By Claim~\ref{claimore1} there are at least 
$\gamma n^{r-1}- r n^{r-2} \geq \gamma n^{r-1} /2$ $(r-1)$-sets $X' \subseteq V(G)\setminus (X \cup \{x\})$
such that $G[X' \cup \{y\}]$ and $G[X' \cup \{z\}]$
contain spanning copies of $T_r$. 
Set $X_1:=\{z\} \cup X \cup X'$. By construction of $X_1$, we have that $G[X_1 \cup \{x\}]$ and $G[X_1 \cup \{y\}]$
contain $T_r$-factors.
In total,  there are at least
$$(\eta _1n-1) \times 
\frac{\gamma n^{r-1}}{2} \times \frac{\gamma n^{r-1}}{2} \times \frac{1}{(2r-1)!} \stackrel{(\ref{orehier1})}{\geq} \gamma _1 n^{2r-1}
$$
choices for $X_1$. \qedclaim
\end{proofclaim}

\smallskip

Claim~\ref{claimore3} ensures that we can apply Lemma~\ref{lolemma} with $T_r, r, 2, \gamma _1$ playing the roles of $H, h, t, \gamma$. Thus, 
$V(G)$ contains a set $M$ so that
\begin{itemize}
\item $|M|\leq (\gamma_1/2)^r n/4$;
\item $M$ is a $T_r$-absorbing set for any $W \subseteq V(G) \setminus M$ such that $|W| \in r \mathbb N$ and  $|W|\leq (\gamma_1 /2)^{2r} rn/32 $.
\end{itemize}

Set $G_1:=G\setminus M$. Then (i) and (ii) imply that
for any $x \ne y\in V(G_1)$ at least one of the following conditions holds:
\begin{itemize}
    \item $xy, yx \in E(G_1)$;
    \item $d^*_{G_1}(z) \geq (1-1/r+\gamma_2 )|G_1|$ for some $z \in \{x,y \}$.
\end{itemize}
Theorem~\ref{orethm3} therefore implies that $G_1$ contains a $T_r$-tiling $\mathcal H_1$ covering all but at most $\gamma _2 |G_1| \stackrel{(\ref{orehier1})}{\leq} (\gamma_1 /2)^{2r} rn/32 $ vertices in $G_1$. Moreover, the choice of $M$ ensures that there is a $T_r$-tiling $\mathcal H_2$ covering precisely the vertices in $G$ that are not in  $\mathcal H_1$. Thus, $\mathcal H:=\mathcal H_1 \cup \mathcal H_2$ is our desired $T_r$-factor in $G$.

\smallskip

{\bf Case 2: $|S|<\eta _1n$.}
In this case, by repeatedly applying Fact~\ref{simplefact2} we greedily construct a $T_r$-tiling $\mathcal S$ in $G$ of size at most $\eta _1 n$ that covers all of $S$.

Let $G_0:= G \setminus V(\mathcal S)$. Note that
as $V(G_0)\subseteq L$ we have that
$d_{G_0}^* (z) \geq (1-1/r+\eta /2)n$ for every $z \in V(G_0)$.
This condition allows us to conclude that, for every distinct $x, y \in V(G_0)$,  there are at least 
 $\gamma |G_0|^{r-1}$ $(r-1)$-sets $X \subseteq V(G_0)$
such that $G_0[X \cup \{x\}]$ and $G_0[X \cup \{y\}]$
contain spanning copies of $T_r$.
We can now apply Lemma~\ref{lolemma} and argue analogously to the last two paragraphs of Case~1 to conclude that $G_0$ contains a $T_r$-factor $\mathcal H$. Thus, $\mathcal S \cup \mathcal H$ is our desired $T_r$-factor in $G$.
\end{proof}

\begin{remark}
Notice that we can replace (\ref{oreuseful}) in Theorem~\ref{orethm2} with any of the following conditions:
(i) $d ^+_G (x)+ d^+_G(y) \geq  2(1-1/r+\eta)n$; (ii) $d ^-_G (x)+ d^-_G(y) \geq   2(1-1/r+\eta)n$; (iii) 
$d ^-_G (x)+ d^+_G(y) \geq   2(1-1/r+\eta)n$. Indeed, in each case the proof proceeds analogously.
\end{remark}

\section{Open problems}\label{seccon}

In Theorem~\ref{elzaharspecial} we asymptotically determined the minimum semi-degree threshold for forcing a $C$-factor in a digraph for every orientation of an odd cycle $C$. The corresponding problem for even cycles seems more challenging;
as a starting point, it would be interesting to resolve the following conjecture.

\begin{conjecture}\label{conj1}
    Given any $\eta>0$, there exists $n_0=n_0(\eta)\in \mathbb N$ such that the following holds for all $n \geq n_0$ divisible by $4$.
If $G$ is an $n$-vertex digraph with 
$$\delta^0(G)\geq (1/2+\eta)n,$$
then $G$ contains a $C_4$-factor.
\end{conjecture}
The example given after the statement of Theorem~\ref{mainthm} again shows that if Conjecture~\ref{conj1} is true, then the bound on $\delta^0(G)$ is asymptotically best possible.
A resolution of Conjecture~\ref{conj1}  could also provide insight into the question for even cycle factors more generally.

\begin{question}\label{ques2}
    Is the following statement true?  Let $C$ be an orientation of an even cycle. Given any $\eta>0$, there exists $n_0=n_0(\eta,C)\in \mathbb N$ such that the following holds for all $n \geq n_0$ divisible by $|C|$.
If $G$ is an $n$-vertex digraph with 
$$\delta^0(G)\geq (1/2+\eta)n,$$
then $G$ contains a $C$-factor.
\end{question}

\begin{remark}
Note that Lemma~\ref{lem:clabs} implies that to resolve Conjecture~\ref{conj1} and Question~\ref{ques2}, it suffices to resolve the `almost perfect' $C$-tiling versions of them.
\end{remark}

Finally, in Theorem~\ref{orethm2} we asymptotically determined the Ore-type threshold for forcing a $T_r$-factor. It would be interesting to prove a sharp version of this result as well as to 
 establish other Ore-type conditions for forcing $H$-factors in digraphs.

\section*{Acknowledgments}
The authors are grateful to Andrzej Czygrinow, Louis DeBiasio,  Allan Lo and Reshma Ramadurai for useful discussions.

\smallskip
{\noindent \bf Data availability statement.}
There are no additional data beyond that contained within the main manuscript.

{\noindent \bf Open access statement.}
	This research was funded in part by the  EPSRC grant   UKRI1117. For the purpose of open access, a CC BY public copyright licence is applied to any Author Accepted Manuscript arising from this submission.


\begin{thebibliography}{99}




\bibitem{abbasi} S. Abbasi, The solution of the El-Zahar problem, Ph.D. Thesis, Rutgers University, 1998. 

\bibitem{alon19962}
N. Alon and E. Fischer, 2-factors in dense graphs, \emph{Discr.
  Math.} \textbf{152} (1996),  13--23.

\bibitem{AlonShapira04}
N. Alon and A. Shapira,
Testing subgraphs in directed graphs,
\textit{J. Comput. System Sci.}
{\bf 69} (2004),
353--382.


\bibitem{alony}
N. Alon and R. Yuster, Almost $H$-factors in dense graphs, \emph{Graphs Combin.} {\bf 8} (1992), 95--102.


\bibitem{cdkm} A. Czygrinow, L. DeBiasio, H.A. Kierstead and T. Molla, 
An extension of the Hajnal--Szemer\'edi theorem to directed graphs, 
{\it Combin. Probab.  Comput.} 
{\bf 24} (2015), 754--773.

\bibitem{cdmt} A. Czygrinow, L. DeBiasio, T. Molla and A. Treglown,
Tiling directed graphs with tournaments, \emph{Forum Math. Sigma} {\bf 6} (2018), e2.

\bibitem{ckm} A. Czygrinow, H.A. Kierstead and T. Molla, On directed versions of the Corr\'adi--Hajnal Corollary, \emph{Euro. J. Combin.}~{\bf 42} (2014), 1--14.

\bibitem{arbor_hamcyc}
L.~DeBiasio, D.~K\"{u}hn, T.~Molla, D.~Osthus and A.~Taylor, Arbitrary orientations of {H}amilton cycles in digraphs, \emph{SIAM J. Discr. Math.}
  \textbf{29} (2015), 1553--1584.
  
\bibitem{debiasiomolla} L. DeBiasio and T. Molla,
Semi-degree threshold for anti-directed Hamiltonian cycles, \emph{Electron. J. Combin.} {\bf{22} }(2015), P4.34.

\bibitem{zahar}
M.H. El-Zahar, On circuits in graphs, \emph{Discr. Math.} {\bf 50} (1984), 227--230.

\bibitem{enomoto} H. Enomoto, On the existence of disjoint cycles in a graph, \textit{Combinatorica} {\bf 18} (1998), 487--492.

\bibitem{EM} B.~Ergemlidze and T.~Molla, Transversal $C_k$-factors in subgraphs of the balanced blow-up of $C_k$,
{\it Combin. Probab.  Comput.} {\bf 31} (2022), 1031--1047.

\bibitem{gh} A. Ghouila-Houri, Une condition suffisante d'existence d'un circuit hamiltonien,
{\it C.R. Acad. Sci. Paris} {\bf 251} (1960), 495--497.

\bibitem{haggkvist1995oriented}
R.~H\"aggkvist and A.~Thomason, Oriented {H}amilton cycles in digraphs,
  \emph{J. Graph Theory} \textbf{19} (1995), 471--479.


\bibitem{hs} A. Hajnal and E. Szemer\'edi, Proof of a conjecture of P. Erd\H{o}s, {\it Combinatorial Theory and its Applications vol. II} {\bf 4} (1970), 601--623.


\bibitem{kier} H.A. Kierstead and A.V. Kostochka, An Ore-type Theorem on Equitable
Coloring, \emph{J. Combin. Theory Ser. B}~{\bf{98}}  (2008), 226--234.

\bibitem{Komlos} J. Koml\'os, Tiling Tur\'an theorems, {\it Combinatorica} {\bf 20} (2000), 203--218.



\bibitem{blowup} J.~Koml\'os, G.N.~S\'ark\"ozy and E.~Szemer\'edi,
Blow-up lemma,
\emph{Combinatorica} {\bf 17} (1997), 109--123.

\bibitem{simo} J. Koml\'{o}s and M. Simonovits, Szemer\'{e}di's Regularity Lemma and its applications in graph theory, \emph{Combinatorics: Paul Erd\H{o}s is eighty vol. II}
(1996), 295--352.

\bibitem{KuhnO} D. K\"uhn and D. Osthus, The minimum degree threshold for perfect graph packings, {\it Combinatorica} {\bf 29} (2009), 65--107.

\bibitem{lonote} A. Lo, From finding a spanning subgraph $H$ to an $H$-factor, arXiv:2509.18832.

\bibitem{lomark}  A. Lo and K. Markstr\"om, $F$-factors in hypergraphs via absorption, \emph{Graphs Combin.} {\bf 31} (2015), 679--712.

\bibitem{mont} A. Kathapurkar and R. Montgomery, Spanning trees in dense directed graphs, \emph{J. Combin. Theory Ser. B} {\bf 156} (2022), 223--249.

\bibitem{theothesis} T. Molla, On tiling directed graphs with cycles and tournaments, PhD thesis, Arizona State University, 2013.

\bibitem{tassio} R. Mycroft and T. Naia, Trees and tree-like structures in dense digraphs, 	arXiv:2012.09201.

\bibitem{stein} M. Stein, 
Oriented trees and paths in digraphs, \emph{Surveys in Combinatorics},
London Mathematical Society Lecture Note Series, Cambridge University Press, 2024.

\bibitem{stein2} M. Stein and C. Z\'arate-Guer\'en, Antidirected subgraphs of oriented graphs,
{\it Combin. Probab.  Comput.} {\bf 33} (2024), 446--466.

\bibitem{treglown} A. Treglown,
On directed versions of the Hajnal--Szemer\'edi theorem,
\emph{Combin. Probab. Comput.} {\bf 24} (2015), 873--928. 

\bibitem{dshsz}  A. Treglown,
A degree sequence Hajnal--Szemer\'edi theorem,
\emph{J. Combin. Theory Ser. B}
{\bf 118} (2016), 13--43.

\bibitem{wang} H. Wang, On the maximum number of disjoint cycles in a graph, \emph{Discr. Math.} {\bf 205}
(1999), 183--190.

 \bibitem{wang2} H. Wang, Independent directed triangles in a directed graph, \emph{Graphs  Combin.}~{\bf 16} (2000),
    453--462.

\bibitem{wang3} H. Wang, Disjoint directed cycles in directed graphs, \emph{Discr. Math.} {\bf 343}
(2020), 111927.

\bibitem{wood} D.R. Woodall, Sufficient conditions for circuits in graphs, \emph{Proc. London Math. Soc.} {\bf 24} (1972),
739--755.

\bibitem{zw} D. Zhang and H. Wang, Disjoint directed quadrilaterals in a directed graph,
\emph{J. Graph Theory} {\bf 50} (2005), 91--104.
\end{thebibliography}
\end{document}